\newtheorem{thm}{Theorem}[section]
\newtheorem{cor}[thm]{Corollary}
\newtheorem{lem}[thm]{Lemma}
\newtheorem{prop}[thm]{Proposition}
\theoremstyle{definition}
\newtheorem{example}[thm]{Example}
\theoremstyle{remark}
\newtheorem{rem}[thm]{Remark}
\numberwithin{equation}{section}
\begin{document}

%Referring commands:
\newcommand{\thmref}[1]{Theorem~\ref{#1}}
\newcommand{\secref}[1]{Section~\ref{#1}}
\newcommand{\lemref}[1]{Lemma~\ref{#1}}
\newcommand{\propref}[1]{Proposition~\ref{#1}}
\newcommand{\corref}[1]{Corollary~\ref{#1}}
\newcommand{\remref}[1]{Remark~\ref{#1}}
\newcommand{\eqnref}[1]{(\ref{#1})}
\newcommand{\exref}[1]{Example~\ref{#1}}

%Simplified symbols:
\newcommand{\nc}{\newcommand}
\nc{\Z}{{\mathbb Z}}
\nc{\C}{{\mathbb C}}
\nc{\N}{{\mathbb N}}
\nc{\F}{{\mf F}}
\nc{\Q}{\ol{Q}}
\nc{\la}{\lambda}
\nc{\ep}{\epsilon}
\nc{\h}{\mathfrak h}
\nc{\n}{\mf n}
\nc{\A}{{\mf a}}
\nc{\G}{{\mathfrak g}}
\nc{\SG}{\overline{\mathfrak g}}
\nc{\DG}{\widetilde{\mathfrak g}}
\nc{\D}{\mc D} \nc{\Li}{{\mc L}} \nc{\La}{\Lambda} \nc{\is}{{\mathbf
i}} \nc{\V}{\mf V} \nc{\bi}{\bibitem} \nc{\NS}{\mf N}
\nc{\dt}{\mathord{\hbox{${\frac{d}{d t}}$}}} \nc{\E}{\mc E}
\nc{\ba}{\tilde{\pa}} \nc{\half}{\frac{1}{2}} \nc{\mc}{\mathcal}
\nc{\mf}{\mathfrak} \nc{\hf}{\frac{1}{2}}
\nc{\hgl}{\widehat{\mathfrak{gl}}} \nc{\gl}{{\mathfrak{gl}}}
\nc{\hz}{\hf+\Z} \nc{\vac}{|0 \rangle}
\nc{\dinfty}{{\infty\vert\infty}} \nc{\SLa}{\overline{\Lambda}}
\nc{\SF}{\overline{\mathfrak F}} \nc{\SP}{\overline{\mathcal P}}
\nc{\U}{\mathfrak u} \nc{\SU}{\overline{\mathfrak u}}
\nc{\ov}{\overline}
\nc{\wt}{\widetilde}
\nc{\sL}{\ov{\mf{l}}}
\nc{\sP}{\ov{\mf{p}}}
\nc{\osp}{\mf{osp}}
\nc{\sdeg}{\ov{\rm deg}}
\nc{\spo}{\mf{spo}}
\nc{\hosp}{\widehat{\mf{osp}}}
\nc{\hspo}{\widehat{\mf{spo}}}

\advance\headheight by 2pt

\title[Irreducible Characters of Superalgebra and Super Duality]{Irreducible Characters of General Linear Superalgebra and Super Duality}

\author[Shun-Jen Cheng]{Shun-Jen Cheng$^\dagger$}
\thanks{$^\dagger$Partially supported by an NSC-grant and an Academia Sinica Investigator grant}
\address{Institute of Mathematics, Academia Sinica, Taipei,
Taiwan 10617} \email{chengsj@math.sinica.edu.tw}

\author[Ngau Lam]{Ngau Lam$^{\dagger\dagger}$}
\thanks{$^{\dagger\dagger}$Partially supported by an NSC-grant}
\address{Department of Mathematics, National Cheng-Kung University, Tainan, Taiwan 70101}
\email{nlam@mail.ncku.edu.tw}

\begin{abstract} \vspace{.3cm}
We develop a new method to solve the irreducible character problem for a wide class
of modules over the general linear superalgebra, including all the finite-dimensional modules, by directly
relating the problem to the classical Kazhdan-Lusztig theory. Furthermore, we prove that certain parabolic BGG categories over the general linear algebra and over the general linear superalgebra are equivalent.   We also verify a parabolic version of a
conjecture of Brundan on the irreducible characters in the BGG category of the
general linear superalgebra.
\end{abstract}
%\noindent{\bf Key words:} Lie superalgebra, differential
%operators, free field realization, Howe duality.

%\vspace{.3cm}

%\noindent{\bf Mathematics Subject Classifications (1991)}: 17B67.

 \maketitle

%\tableofcontents

\section{Introduction}

The problem of finding the finite-dimensional irreducible characters of simple Lie
superalgebras was first posed in \cite{K1, K2}.  This problem turned out to be one of the
most challenging problems in the theory of Lie superalgebras, and in the type $A$ case
was first solved by Serganova \cite{Se}.  Later on, inspired by \cite{LLT}, Brundan in
\cite{B} provided an elegant new solution of the problem. To be more precise, Brundan in
\cite[Conjecture 4.32 and (4.35)]{B} gave a conjectural character formula for every
irreducible highest weight $\gl(m|n)$-module in the Bernstein-Gelfand-Gelfand category
$\ov{\mc{O}}$ in terms of certain Brundan-Kazhdan-Lusztig polynomials. The validity of
the conjecture would imply a remarkable formulation of the Kazhdan-Lusztig theory of
$\ov{\mc O}$ in terms of canonical and dual basis on a certain Fock space. Brundan then
solved the finite-dimensional irreducible character problem by verifying the conjecture
for the subcategory of finite-dimensional $\gl(m|n)$-modules, in which case the Fock
space is $\widehat{\mc{E}}^{m|n}$ (see \secref{KL:polynomials}). One of the main purposes
of the present paper is to establish Brundan's conjecture for a substantially larger
subcategory of $\ov{\mc O}$ of $\gl(m|n)$-modules, which includes all the
finite-dimensional ones. We note that a similar Fock space formulation is known among
experts for modules of the general linear algebra $\gl(m+n)$ in the category $\mc{O}$,
and in particular for modules in the maximal parabolic subcategory corresponding to the
Levi subalgebra $\gl(m)\oplus\gl(n)$, in which case the Fock space is
$\widehat{\mc{E}}^{m+n}$ (see \secref{KL:polynomials}).

Let $\G$ and $\SG$ denote direct limits of general linear algebras $\gl(m+n)$ and of
general linear superalgebras $\gl(m|n)$, respectively, as $n\to\infty$ (see
\secref{aux222} and \secref{aux333}). Motivated by \cite{B} it was shown in \cite{CWZ}
that in the limit $n\to\infty$ the Fock spaces $\widehat{\mc{E}}^{m+n}$ and
$\widehat{\mc{E}}^{m|n}$ have compatible canonical and dual canonical bases, and that the
Kazhdan-Lusztig polynomials in $\widehat{\mc{E}}^{m+\infty}$ and
$\widehat{\mc{E}}^{m|\infty}$ can be identified. Now the Kazhdan-Lusztig polynomials of
$\widehat{\mc{E}}^{m+\infty}$ describe the $\G$-module category $\mc{O}^f_{[-m,-2]}$
whose objects are the direct limits of modules in the above-mentioned maximal parabolic
subcategory, while those of $\widehat{\mc{E}}^{m|\infty}$ describe the $\SG$-module
category $\ov{\mc{O}}^f_{[-m,-2]}$ whose objects are direct limits of finite-dimensional
$\gl(m|n)$-modules. From this and Brundan's formulation it follows that the classical
(parabolic) Kazhdan-Lusztig polynomials of the general linear algebra also give solution
to the finite-dimensional irreducible character problem for the general linear
superalgebra.

Motivated by \cite{CWZ} Wang and the first author in \cite{CW2} compare a more general
parabolic $\G$-module category ${\mc{O}}^f_Y$ with a corresponding $\SG$-module category
$\ov{\mc{O}}^f_Y$, where $Y$ here is any subset of $[-m,-2]$ (see Sections \ref{aux222},
\ref{aux333}, and \remref{rmk:intro}). A precise statement of the parabolic Brundan
conjecture (\cite[Conjecture 4.32]{B}) on the character of irreducible $\SG$-modules in
$\ov{\mc{O}}^f_Y$ was given in \cite[Conjecture 3.10]{CW2}. The results in \cite{CWZ,
CW2} suggest a direct connection between the categories $\mc{O}^f_Y$ and
$\ov{\mc{O}}^f_Y$. In fact, the categories $\mc{O}^f_Y$ and $\ov{\mc{O}}^f_Y$ are
conjectured to be equivalent in \cite[Conjecture 4.18]{CW2}, which was referred to as
{\em super duality}.

The purpose of the present paper is to establish this super duality. Our main idea is the
introduction of a bigger Lie superalgebra $\DG$ (\secref{aux122}), which contains and
interpolates $\G$ and $\SG$.  We then study a corresponding category $\wt{\mc{O}}^f_Y$ of
$\DG$-modules and define certain truncation functors
$T:\wt{\mc{O}}^f_Y\rightarrow\mc{O}^f_Y$ and
$\ov{T}:\wt{\mc{O}}^f_Y\rightarrow\ov{\mc{O}}^f_Y$ (\secref{Tfunctors}).  These functors
are shown to send parabolic Verma $\DG$-modules to the respective parabolic Verma $\G$-
and $\SG$-modules, and furthermore irreducible $\DG$-modules to the respective
irreducible $\G$- and $\SG$-modules. From this we obtain in \thmref{character} a solution
of the irreducible character problem for $\SG$-modules in $\ov{\mc{O}}^f_Y$.  The
solution of the irreducible character problem then allows us to compare the
Kazhdan-Lusztig polynomials in $\ov{\mc{O}}^f_Y$ with those in $\mc{O}^f_Y$.  This then
enables us to prove in \thmref{thm:equivalence} that the functors $T$ and $\ov{T}$ define
equivalences of categories from which super duality follows.

Note that a special case of the super duality conjecture was already formulated for the
categories $\mc{O}^f_{[-m,-2]}$ and $\ov{\mc{O}}^f_{[-m,-2]}$ in \cite[Conjecture
6.10]{CWZ}. A proof of this special case was announced recently in \cite{BS}, with a
proof to appear in a sequel of \cite{BS}.  Our method differs significantly from that of
Brundan and Stroppel, as ours is independent of \cite{B}. Furthermore our approach
enables us to explicitly construct functors inducing this equivalence, and it is
applicable to more general module categories.

We want to emphasize that, in contrast to \cite{CWZ}, the arguments presented in this
article do not depend on \cite{B} or \cite {Se}, and hence \thmref{character} also gives
an independent solution to the finite-dimensional irreducible character problem for the
general linear superalgebra as a special case. By directly relating the irreducible
character problem of Lie superalgebras to that of Lie algebras our solution of the
problem becomes surprisingly elementary. Our method is applicable to other finite and
infinite-dimensional superalgebras, e.g. the ortho-symplectic Lie superalgebras
\cite{CLW}.

This article is organized as follows. In \secref{section2} the Lie
superalgebras $\DG$, $\G$ and $\SG$ are defined, together with the
module categories $\wt{\mc{O}}^f_Y$, $\mc{O}^f_Y$ and
$\ov{\mc{O}}_Y^f$.  In \secref{section3} the main tool, odd
reflections \cite{LSS}, of making connections between these
categories is introduced, and the crucial \lemref{lem:change}
is proved. In \secref{section4} we show that the Kazhdan-Lusztig
polynomials of these categories coincide, from which we then
derive in \secref{section5} the equivalence of these categories.

We conclude this introduction by setting the notation to be used throughout this article.
The symbols $\Z$, $\N$, and $\Z_+$ stand for the sets of all, positive and non-negative
integers, respectively. For $m\in\Z$ we set $\langle m\rangle:=m$, if $m> 0$, and
$\langle m\rangle:=0$, otherwise. For integers $a<b$ we set $[a,b]:=\{a,a+1,\cdots,b\}$.
Let $\mc{P}$ denote the set of partitions.  For $\la\in\mc{P}$ we denote by $\la'$ the
transpose partition of $\la$, by $\ell(\la)$ the length of $\la$ and by
$s_\la(y_1,y_2,\cdots)$ the Schur function in the indeterminates $y_1,y_2,\cdots$
associated with $\la$. For a super space $V=V_{\bar{0}}\oplus V_{\bar{1}}$ and a
homogeneous element $v\in V$, we use the notation $|v|$ to denote the $\Z_2$-degree of
$v$. Let $\mc{U}(\G)$ denote the universal enveloping algebra of a Lie (super)algebra
$\G$. Finally all vector spaces, algebras, tensor products, et cetera, are over the field
of complex numbers $\C$.

\smallskip

\noindent{\bf Acknowledgments.} We are very grateful to Weiqiang Wang for numerous
helpful comments and suggestions. We also thank one of the referees for suggestions.

\section{The Lie Superalgebras $\G$, $\SG$ and $\widetilde{\G}$}\label{section2}

\subsection{The Lie superalgebra $\DG$}\label{aux122}

For $m\in \N$, let $\wt{V}$ denote the complex super
space with homogeneous basis $\{v_{r}\vert r\in[-m,-1]\cup\hf\N\}$. The
$\Z_2$-gradation is determined by $|v_r|=\bar{1}$, for
$r\in\hf+\Z_+$, and $|v_i|=\bar{0}$, for $i\in[-m,-1]\cup\N$.  We
denote by $\widetilde{\G}$ the Lie
superalgebra of endomorphisms of $\wt{V}$ vanishing
on all but finitely many $v_r$s. For $r, s,p\in[-m,-1]\cup\hf\N$,
let $E_{rs}$ denote the endomorphism defined by
$E_{rs}(v_p):=\delta_{sp}v_r$. Then $\widetilde{\G}$ equals the
Lie superalgebra spanned by these $E_{rs}$s. Let $\DG^{<0}$ be
the subalgebra isomorphic to the linear algebra $\gl(m)$ spanned by $E_{ij}$,
$i,j\in[-m,-1]$.

Let $\widetilde{\h}$ stand for the Cartan subalgebra spanned by
the $E_{rr}$s and let $\{\epsilon_r\in\widetilde{\h}^*\vert
r\in[-m,-1]\cup\hf\N\}$ be the basis dual to
$\{E_{rr}\in\widetilde{\h}\vert r\in [-m,-1]\cup\hf\N\}$.  Let
$\widetilde{\Pi}$ denote the simple roots
$\{\alpha_{-m}:=\epsilon_{-m}-\epsilon_{-m+1},\cdots,\alpha_{-1}:=\epsilon_{-1}-\epsilon_{1/2}\}\cup
\{\alpha_r:=\epsilon_r-\epsilon_{r+\hf}\vert r\in\hf\N\}$. The
corresponding Dynkin diagram is

\begin{center}
\hskip -3cm \setlength{\unitlength}{0.16in}
\begin{picture}(24,3)
\put(6,2){\makebox(0,0)[c]{$\bigcirc$}}
\put(8.4,2){\makebox(0,0)[c]{$\bigcirc$}}
\put(12.85,2){\makebox(0,0)[c]{$\bigcirc$}}
\put(15.25,2){\makebox(0,0)[c]{$\bigotimes$}}
\put(17.4,2){\makebox(0,0)[c]{$\bigotimes$}}
\put(21.9,2){\makebox(0,0)[c]{$\bigotimes$}}
\put(6.4,2){\line(1,0){1.55}} \put(8.82,2){\line(1,0){0.8}}
\put(11.2,2){\line(1,0){1.2}} \put(13.28,2){\line(1,0){1.45}}
\put(15.7,2){\line(1,0){1.25}} \put(17.8,2){\line(1,0){0.9}}
\put(20.1,2){\line(1,0){1.4}}
\put(22.4,2){\line(1,0){1.4}}
\put(10.5,1.95){\makebox(0,0)[c]{$\cdots$}} \put(19.5,1.95){\makebox(0,0)[c]{$\cdots$}}
\put(24.5,1.95){\makebox(0,0)[c]{$\cdots$}}
\put(6.3,1){\makebox(0,0)[c]{\tiny $\alpha_{-m}$}} \put(8.9,1){\makebox(0,0)[c]{\tiny
$\alpha_{-m+1}$}} \put(12.8,1){\makebox(0,0)[c]{\tiny $\alpha_{-2}$}}
\put(15.15,1){\makebox(0,0)[c]{\tiny $\alpha_{-1}$}} \put(17.8,1){\makebox(0,0)[c]{\tiny
$\alpha_{1/2}$}} \put(22,1){\makebox(0,0)[c]{\tiny $\alpha_{n}$}}
\put(-1.5,2){\makebox(0,0)[c]{(D1)}}
\end{picture}
\end{center}
For $\alpha\in \widetilde{\Pi}$ let $\alpha^\vee$ denote the simple coroot corresponding
to $\alpha$. Explicitly, we have $\alpha^\vee_{-1}=E_{-1,-1}+E_{1/2,1/2}$,
$\alpha^\vee_{1/2}=-E_{1/2,1/2}-E_{11}$, $\alpha^\vee_{1}=E_{11}+E_{3/2,3/2}$,
$\alpha^\vee_{3/2}=-E_{3/2,3/2}-E_{22}$ et cetera.

Given $\alpha\in \widetilde{\h}^*$, let
$\widetilde{\G}_\alpha:=\{x\in \widetilde{\G}\mid\,
[h,x]=\alpha(h)x, \forall h\in \widetilde{\h}\}$ and let
$\widetilde{\Delta}$ denote the set of all roots. The positive
roots with respect to $\widetilde{\Pi}$ will be denoted by
$\widetilde{\Delta}_+$, while ${\wt{\mf{b}}}$ denotes the Borel
subalgebra with respect to $\wt{\Delta}_+$. Let
$\wt{\mf{n}}:=[{\wt{\mf{b}}},{\wt{\mf{b}}}]$ and let
$\wt{\mf{n}}_-$ be the opposite nilradical.

For any subset $Y\subseteq[-m,-2]$ (including $Y=\emptyset$),
define
 \begin{align*}
 \wt{\mf{l}}_Y&:=\widetilde{\h}\oplus(\oplus_{\alpha\in
 \widetilde{\Delta}_Y}\widetilde{\G}_\alpha),\\
 \wt{\mf{u}}_Y&:=\oplus_{\alpha\in
 \widetilde{\Delta}_+\backslash(\widetilde{\Delta}_Y)_+}\widetilde{\G}_\alpha,\\
 (\wt{\mf{u}}_Y)_-&:=\oplus_{\alpha\in
 \widetilde{\Delta}_+\backslash(\widetilde{\Delta}_Y)_+}\widetilde{\G}_{-\alpha},\\
 \wt{\mf{p}}_Y&:=\wt{\mf{l}}_Y\oplus\wt{\mf{u}}_Y,
 \end{align*}
where $\widetilde{\Delta}_Y:=\widetilde{\Delta}\cap(\oplus_{r\in
Y\cup\hf\N}\Z\alpha_r)$ and
$(\widetilde{\Delta}_Y)_+:=\widetilde{\Delta}_+\cap\widetilde{\Delta}_Y$.
Then $\wt{\mf{p}}_Y$ is a parabolic subalgebra of $\wt{\G}$ with
Levi subalgebra $\wt{\mf{l}}_Y$ and nilpotent radical
$\wt{\mf{u}}_Y$. Set
$\wt{\mf{p}}_Y^{<0}:=\DG^{<0}\cap\wt{\mf{p}}_Y$ and
$\wt{\mf{l}}_Y^{<0}:=\DG^{<0}\cap\wt{\mf{l}}_Y$.

Given $\la\in\widetilde{\h}^*$, we denote by
$L(\wt{\mf{l}}_Y,\la)$ the irreducible $\wt{\mf{l}}_Y$-module of
highest weight $\la$ with respect to
$\wt{\mf{l}}_Y\cap\wt{\mf{b}}$, which we may regard as an
irreducible $\wt{\mf{p}}_Y$-module in the usual way. Define the
{\em parabolic Verma $\DG$-module}
\begin{equation*}
\wt{K}(\la):={\rm
Ind}_{\wt{\mf{p}}_Y}^{\wt{\G}}L(\wt{\mf{l}}_Y,\la).
\end{equation*} Let $\wt{L}(\la)$ be the irreducible $\widetilde{\G}$-module of
highest weight $\la$ with respect to $\wt{\mf{b}}$.

Set
\begin{align*}\mc{P}_Y:=&\{\la=(\la_{-m},\la_{-m+1},\cdots,
\la_{-1},\la_1,\la_2,\cdots)\, \mid\\
&\la_i\in\Z,\,\forall i;\,
\langle\sum_{i=-m}^{-1}\la_i\epsilon_i,\alpha^\vee_j\rangle\in\Z_+,
\forall j\in Y;\,(\la_1,\la_2,\cdots)\in \mc{P}\}.\end{align*}
 For $\la\in\mc{P}_Y$ we let $\la_+:=(\la_1,\la_2,\cdots)\in \mc{P}$
and $\la^{<0}:=\sum_{i=-m}^{-1}\la_i\epsilon_i$.

 Given a partition
$\mu=(\mu_1,\mu_2,\cdots)$ we set $\theta(\mu)$ to be the sequence
of integers
\begin{equation*}
\theta(\mu):=(\theta(\mu)_{1/2},\theta(\mu)_1,\theta(\mu)_{3/2},\theta(\mu)_2,\cdots),
\end{equation*}
where $\theta(\mu)_{i-1/2}:=\langle\mu'_i-(i-1)\rangle$ and
$\theta(\mu)_i:=\langle\mu_i-i\rangle$, for $i\in\N$. We recall that $\langle
\cdot\rangle$ is defined at the end of the Introduction.

\begin{example}\label{eg:theta}
For the partition $\la =(7,6,3,3,1)$, we have $\theta(\la)=(5,6,3,4,2,0,0,\cdots)$. This
can be read off the Young diagram of $\la$ as follows:
\begin{center}
\hskip 0cm \setlength{\unitlength}{0.25in}
\begin{picture}(7,5.5)
\put(0,0){\line(0,1){5}}
\put(1,0){\line(0,1){5}}
\put(2,1){\line(0,1){3}}
\put(3,1){\line(0,1){2}}
\put(6,3){\line(0,1){1}}
\put(7,4){\line(0,1){1}}
\put(0,0){\line(1,0){1}} \put(1,1){\line(1,0){2}} \put(3,3){\line(1,0){2}}
\put(2,3){\line(1,0){4}} \put(1,4){\line(1,0){6}} \put(0,5){\line(1,0){7}}
\put(2.5,4.5){\makebox(0,0)[c]{$6$}} \put(2.5,3.5){\makebox(0,0)[c]{$4$}}
\put(2.5,2.5){\makebox(0,0)[c]{$2$}} \put(0.5,2.5){\makebox(0,0)[c]{$5$}}
\put(1.5,2.5){\makebox(0,0)[c]{$3$}}
\end{picture}
\end{center}
\end{example}
Set
\begin{align*}
&\wt{\mc{P}}_Y:=\{\la^{<0}+\sum_{r\in\hf\N}\theta(\la_+)_r\epsilon_r\in{\wt{\h}}^*\,
\mid \,\la=(\la_i)\in\mc{P}_Y\}.
\end{align*}

For a semisimple $\wt{\h}$-module $\wt{M}$ and $\gamma\in
\wt{\h}^*$, we define
$$\wt{M}_\gamma:=\{m\in\wt{M}\vert
hm=\gamma(h)m,\forall h\in\wt{\h}\}.$$

\subsection{The subalgebra $\G$}\label{aux222}

We denote by $\G$ the subalgebra of $\wt{\G}$
generated by $E_{rs}$, $r,s\in[-m,-1]\cup\N$.  The Cartan
subalgebra $\h$ has basis $\{E_{ii}\vert i\in[-m,-1]\cup\N\}$, with
dual basis $\{\epsilon_i\vert i\in[-m,-1]\cup\N\}$. The
corresponding Dynkin diagram is
\begin{center}
\hskip -3cm \setlength{\unitlength}{0.16in}
\begin{picture}(24,3)
\put(6,2){\makebox(0,0)[c]{$\bigcirc$}}
\put(8.4,2){\makebox(0,0)[c]{$\bigcirc$}}
\put(12.85,2){\makebox(0,0)[c]{$\bigcirc$}}
\put(15.25,2){\makebox(0,0)[c]{$\bigcirc$}}
\put(17.4,2){\makebox(0,0)[c]{$\bigcirc$}}
\put(21.9,2){\makebox(0,0)[c]{$\bigcirc$}}
\put(6.4,2){\line(1,0){1.55}} \put(8.82,2){\line(1,0){0.8}}
\put(11.2,2){\line(1,0){1.2}} \put(13.28,2){\line(1,0){1.45}}
\put(15.7,2){\line(1,0){1.25}} \put(17.8,2){\line(1,0){0.9}}
\put(20.1,2){\line(1,0){1.4}}
\put(22.4,2){\line(1,0){1.4}}
\put(10.5,1.95){\makebox(0,0)[c]{$\cdots$}}
\put(19.5,1.95){\makebox(0,0)[c]{$\cdots$}}
\put(24.5,1.95){\makebox(0,0)[c]{$\cdots$}}
\put(6.3,1){\makebox(0,0)[c]{\tiny $\alpha_{-m}$}}
\put(8.9,1){\makebox(0,0)[c]{\tiny $\alpha_{-m+1}$}}
\put(12.8,1){\makebox(0,0)[c]{\tiny $\alpha_{-2}$}}
\put(15.15,1){\makebox(0,0)[c]{\tiny $\beta_{-1}$}}
\put(17.5,1){\makebox(0,0)[c]{\tiny $\beta_{1}$}}
\put(22,1){\makebox(0,0)[c]{\tiny $\beta_{n}$}}
\end{picture}
\end{center}
where
\begin{equation*}
\beta_{-1}:=\epsilon_{-1}-\epsilon_1,\quad
\beta_i:=\epsilon_i-\epsilon_{i+1},\quad i\ge 1.
\end{equation*}
Set $\mf{b}:={\wt{\mf{b}}}\cap\G$, ${\mf{n}}:={\wt{\mf{n}}}\cap\G$ and
${\mf{n}}_-:={\wt{\mf{n}}}_-\cap\G$. Set
$\mf{l}_Y:=\wt{\mf{l}}_Y\cap\G$, $\mf{p}_Y:=\wt{\mf{p}}_Y\cap\G$,
$\mf{u}_Y:=\wt{\mf{u}}_Y\cap\G$, and
$(\mf{u}_Y)_-:=(\wt{\mf{u}}_Y)_-\cap\G$.

Given $\la\in{\h}^*$, we denote by $L({\mf{l}}_Y,\la)$ the
irreducible ${\mf{l}}_Y$-module of highest weight $\la$ with
respect to $\mf{l}_Y\cap\mf{b}$, which we may regard as an
irreducible ${\mf{p}}_Y$-module in the usual way. Define the
parabolic Verma $\G$-module
\begin{equation*}
{K}(\la):={\rm Ind}_{\mf{p}_Y}^{{\G}}L({\mf{l}}_Y,\la).
\end{equation*} Let ${L}(\la)$ be the irreducible ${\G}$-module of
highest weight $\la$ with respect to $\mf{b}$.

As usual, we identify $\mc{P}_Y$ with the following set of weights in $\h^*$:
\begin{equation*}
\mc{P}_Y=\{\la^{<0}+\sum_{j\in\N}\la_j\epsilon_j\in\h^*\mid\,\la=(\la_i)\in\mc{P}_Y\}.
\end{equation*}

%For $\la\in{X}$ we denote the projection of $\la$ to
%$\sum_{i=-m}^{-1}\Z\epsilon_i$
%(respectively to $\sum_{j\in\N}\Z\epsilon_j$) by $\la^{<0}$
%(respectively $\la^{>0}$), and write $\la=(\la^{<0}|\la^{>0})$.

For a semisimple ${\h}$-module ${M}$ and $\gamma\in {\h}^*$, we
define
$${M}_\gamma:=\{m\in {M}\vert hm=\gamma(h)m,\forall h\in{\h}\}.$$

\subsection{The subalgebra $\SG$}\label{aux333}

We denote by $\SG$ the subalgebra of $\wt{\G}$
generated by $E_{rs}$, $r,s\in[-m,-1]\cup\hf+\Z_+$. The Cartan
subalgebra $\ov{\h}$ has basis $\{E_{rr}\vert
r\in[-m,-1]\cup\hf+\Z_+\}$, with dual basis $\{\epsilon_r\vert
r\in[-m,-1]\cup\hf+\Z_+\}$. The corresponding Dynkin diagram is
\begin{center}
\hskip -3cm \setlength{\unitlength}{0.16in}
\begin{picture}(24,3)
\put(6,2){\makebox(0,0)[c]{$\bigcirc$}}
\put(8.4,2){\makebox(0,0)[c]{$\bigcirc$}}
\put(12.85,2){\makebox(0,0)[c]{$\bigcirc$}}
\put(15.25,2){\makebox(0,0)[c]{$\bigotimes$}}
\put(17.4,2){\makebox(0,0)[c]{$\bigcirc$}}
\put(21.9,2){\makebox(0,0)[c]{$\bigcirc$}}
\put(6.4,2){\line(1,0){1.55}} \put(8.82,2){\line(1,0){0.8}}
\put(11.2,2){\line(1,0){1.2}} \put(13.28,2){\line(1,0){1.45}}
\put(15.7,2){\line(1,0){1.25}} \put(17.8,2){\line(1,0){0.9}}
\put(20.1,2){\line(1,0){1.4}}
\put(22.4,2){\line(1,0){1.4}}
\put(10.5,1.95){\makebox(0,0)[c]{$\cdots$}}
\put(19.5,1.95){\makebox(0,0)[c]{$\cdots$}}
\put(24.5,1.95){\makebox(0,0)[c]{$\cdots$}}
\put(6.3,1){\makebox(0,0)[c]{\tiny $\alpha_{-m}$}}
\put(8.9,1){\makebox(0,0)[c]{\tiny $\alpha_{-m+1}$}}
\put(12.8,1){\makebox(0,0)[c]{\tiny $\alpha_{-2}$}}
\put(15.15,1){\makebox(0,0)[c]{\tiny $\alpha_{-1}$}}
\put(17.5,1){\makebox(0,0)[c]{\tiny $\beta_{1/2}$}}
\put(22,1){\makebox(0,0)[c]{\tiny $\beta_{n+1/2}$}}
\end{picture}
\end{center}
where
\begin{equation*}
\beta_{i-1/2}:=\epsilon_{i-1/2}-\epsilon_{i+1/2}, \quad i\ge 1.
\end{equation*}

Set $\ov{\mf{b}}:={\wt{\mf{b}}}\cap\SG$,
$\ov{{\mf{n}}}:={\wt{\mf{n}}}\cap\SG$ and
$\ov{\mf{n}}_-:={\wt{\mf{n}}}_-\cap\SG$. Set
$\ov{\mf{l}}_Y:=\wt{\mf{l}}_Y\cap\SG$,
$\ov{\mf{p}}_Y:=\wt{\mf{p}}_Y\cap\SG$,
$\ov{\mf{u}}_Y:=\wt{\mf{u}}_Y\cap\SG$, and
$(\ov{\mf{u}}_Y)_-:=(\wt{\mf{u}}_Y)_-\cap\SG$.

Given $\la\in\ov{\h}^*$ let $L(\ov{\mf{l}}_Y,\la)$ be the
irreducible $\ov{\mf{l}}_Y$-module of highest weight $\la$ with
respect to $\ov{\mf{l}}_Y\cap\ov{\mf{b}}$, which we may regard as
an irreducible $\ov{\mf{p}}_Y$-module. Define the {\em parabolic Verma $\SG$-module}
\begin{equation*}
\ov{K}(\la):={\rm
Ind}_{\ov{\mf{p}}_Y}^{{\SG}}L(\ov{\mf{l}}_Y,\la).
\end{equation*} Let $\ov{L}(\la)$ be the
irreducible ${\SG}$-module of highest weight $\la$ with respect to
$\ov{\mf{b}}$.

 Let
\begin{equation*}
\ov{\mc{P}}_Y:=\{\la^{<0}+\sum_{i\in\N}(\la_+)_i'\epsilon_{i-\hf}\in\ov{\h}^*\mid
\,\la=(\la_i)\in\mc{P}_Y\}.
\end{equation*}

For a semisimple $\ov{\h}$-module $\ov{M}$ and $\gamma\in
\ov{\h}^*$, we define
$$\ov{M}_\gamma:=\{m\in \ov{M}\vert hm=\gamma(h)m,\forall
h\in\ov{\h}\}.$$

\subsection{Parametrization for $\mc{P}_Y$, $\ov{\mc{P}}_Y$ and ${\wt{\mc{P}}_Y}$}
The set $\mc{P}_Y$ parameterizes the sets $\mc{P}_Y$, $\ov{\mc{P}}_Y$ and
${\wt{\mc{P}}_Y}$. From now on we will use the following notation. For
$\la=(\la_i)\in\mc{P}_Y$, let
\begin{align*}
&\la:=\la^{<0}+\sum_{i=1}^\infty{\la}_i\epsilon_i\in \mc{P}_Y,\\
&\la^\natural:=\la^{<0}+\sum_{i=1}^\infty(\la_+)'_i\epsilon_{i-1/2}\in\ov{\mc{P}}_Y,\\
&\la^\theta:=\la^{<0}+\sum_{r\in\hf\N}{\theta(\la_+)}_r\epsilon_r\in{\wt{\mc{P}}_Y}.
\end{align*}
\begin{example} Let $m=3$, $Y=\emptyset$ and $\la=(-5,2,-3,7,6,3,3,1)$ (cf.~\exref{eg:theta}).
We have \begin{align*}
&\la=-5\epsilon_{-3}+2\epsilon_{-2}-3\epsilon_{-1} +7\epsilon_{1}+6\epsilon_{2}+3\epsilon_{3}+3\epsilon_{4}+1\epsilon_{5},\\
&\la^\natural=-5\epsilon_{-3}+2\epsilon_{-2}-3\epsilon_{-1} +5\epsilon_{\frac{1}{2}}+4\epsilon_{\frac{3}{2}}+4\epsilon_{\frac{5}{2}}
+2\epsilon_{\frac{7}{2}}+2\epsilon_{\frac{9}{2}}+2\epsilon_{\frac{11}{2}}
+1\epsilon_{\frac{13}{2}},\\
&\la^\theta=-5\epsilon_{-3}+2\epsilon_{-2}-3\epsilon_{-1} +5\epsilon_{\frac{1}{2}}
+6\epsilon_{1}+3\epsilon_{\frac{3}{2}}+4\epsilon_{2}+2\epsilon_{\frac{5}{2}}.
\end{align*}
\end{example}

\subsection{Categories of $\wt{\G}$-, $\G$-, and $\SG$-modules}\label{sec:O}
Let $\wt{\mc{O}}_Y$ (respectively ${\mc{O}}_Y$, $\ov{\mc{O}}_Y$) be the category of
$\wt{\G}$-(respectively $\G$-, $\SG$-)modules ${M}$ such that ${M}$ is a semisimple
$\wt{\h}$-(respectively $\h$-, $\ov{\h}$-)module and ${\rm dim}{M}_\gamma<\infty$ for
each $\gamma\in \wt{\h}^*$ (respectively ${\h}^*$, $\ov{\h}^*$) satisfying
\begin{itemize}
\item[(i)] ${M}$ decomposes over $\wt{\mf{l}}_Y$ (respectively ${\mf{l}}_Y$,
    $\ov{\mf{l}}_Y$) into a direct sum of $L(\wt{\mf{l}}_Y,\mu^\theta)$ (respectively
    $L({\mf{l}}_Y,\mu)$, $L(\ov{\mf{l}}_Y,\mu^\natural)$), $\mu\in{\mc{P}}_Y$,
\item[(ii)] ${M}$ has a filtration of $\wt{\G}$-(respectively $\G$-, $\SG$-)modules
    ${M}={M}_0\supseteq {M}_1\supseteq{M}_2\supseteq\cdots$ such that for all $i\ge
    0$ ${M}_i/{M}_{i+1}\cong \wt{L}({\nu}^\theta_i)$ (respectively ${L}(\nu_i)$,
    $\ov{L}({\nu}^\natural_i)$), for some $\nu_i\in{\mc{P}}_Y$.
\end{itemize}

The morphisms in $\mc{O}_Y$ are $\G$-homomorphisms. The morphisms in $\wt{\mc{O}}_Y$
(respectively $\ov{\mc{O}}_Y$) are (not necessarily even) $\wt{\G}$-(respectively
$\SG$-)homomorphisms. Clearly ${K}(\la)$, for $\la\in{{\mc{P}}_Y}$, lies in ${\mc{O}}_Y$.
 By \cite[Theorem 3.2]{CK} and \cite[Theorem 3.1]{CK}, $\wt{K}(\la^\theta)$ and $\ov{K}(\la^\natural)$,
for $\la\in{\mc{P}}_Y$, lie in $\wt{\mc{O}}_Y$ and $\ov{\mc{O}}_Y$, respectively.

Let $\wt{\mc{O}}^{f}_Y$ (respectively ${\mc{O}}^{f}_Y$, $\ov{\mc{O}}^{f}_Y$) denote the
full subcategory of $\wt{\mc{O}}_Y$ (respectively ${\mc{O}}_Y$, $\ov{\mc{O}}_Y$)
consisting of objects having finite composition series.

Set $\wt{\Gamma}:=\sum_{j=-m}^{-1}\Z\epsilon_j+\sum_{r\in\hf\N}\Z\epsilon_r$. Let $\wt{V}$ be a semisimple $\wt{\h}$-module  such that
$\wt{M}=\bigoplus_{\gamma\in\wt{\Gamma}}\wt{M}_\gamma$. Then $\wt{V}$ is a
$\Z_2$-graded vector space $\wt{V}=\wt{V}_{\bar{0}}\bigoplus
\wt{V}_{\bar{1}}$ such that
 \begin{equation}\label{wt-Z2-gradation}
 \wt{V}_{\bar{0}}:=\bigoplus_{\mu\in\wt{\Gamma}_{\bar{0}}}\wt{V}_{\mu}\qquad\hbox{and}\qquad
 \wt{V}_{\bar{1}}:=\bigoplus_{\mu\in\wt{\Gamma}_{\bar{1}}}\wt{V}_{\mu},
 \end{equation}
where $\wt{\Gamma}_\epsilon:=\{\mu\in \wt{\Gamma}\mid \sum_{r\in
{1/ 2}+\Z_+}\mu(E_{r,r})\equiv {\epsilon} \,\,(\text{mod }2)\}$.

Set $\ov{\Gamma}:=\sum_{j=-m}^{-1}\Z\epsilon_j+\sum_{r\in\hf+\Z_+}\Z\epsilon_r$. Let $\ov{V}$ be a semisimple $\ov{\h}$-module such that
$\ov{M}=\bigoplus_{\gamma\in\ov{\Gamma}}\ov{M}_\gamma$. Then $\ov{V}$ is an
$\Z_2$-graded vector space $\ov{V}=\ov{V}_{\bar{0}}\bigoplus
\ov{V}_{\bar{1}}$ such that
 \begin{equation}\label{ov-Z2-gradation}
 \ov{V}_{\bar{0}}:=\bigoplus_{\mu\in\ov{\Gamma}_{\bar{0}}}\ov{V}_{\mu}\qquad\hbox{and}\qquad
 \ov{V}_{\bar{1}}:=\bigoplus_{\mu\in\ov{\Gamma}_{\bar{1}}}\ov{V}_{\mu},
 \end{equation}
where $\ov{\Gamma}_\epsilon:=\{\mu\in \ov{\Gamma}\mid \sum_{r\in
{1\over 2}+\Z_+}\mu(E_{r,r})\equiv {\epsilon} \,\,(\text{mod }2)\}$.

It is clear that ${\mc{O}}_Y$ is an abelian category. For $\wt{M}\in\wt{\mc{O}}_Y$, let
$\widehat{\wt{M}}\in\wt{\mc{O}}_Y$ denote the $\wt{\G}$-module $\wt{M}$ equipped with the
$\Z_2$-gradation given by \eqnref{wt-Z2-gradation}. The $\Z_2$-gradation given by
\eqnref{ov-Z2-gradation} is compatible with the $\wt{\G}$-action. Therefore, for
$\wt{M}$, $\wt{N}\in\wt{\mc{O}}_Y$, and $\wt{\varphi}\in{\rm Hom}_{\wt{\mc{O}}_Y}(\wt{M},
\wt{N})$, the kernel and the cokernel of $\wt{\varphi}$ have structures of $\Z_2$-graded
vector spaces defined by \eqnref{wt-Z2-gradation}. The induced $\wt{\G}$-actions on the
kernel and cokernel are compatible with this $\Z_2$-gradation. Thus the kernel and the
cokernel of $\wt{\varphi}$ belong to $\wt{\mc{O}}_Y$, and hence $\wt{\mc{O}}_Y$ and
$\wt{\mc{O}}_Y^f$ are abelian categories. Note that the homomorphic image of
$\wt{\varphi}$ may not be a $\wt{\G}$-submodule of $\wt{N}$. Similarly, the
$\Z_2$-gradation given by \eqnref{ov-Z2-gradation} of any object
${\ov{M}}\in\ov{\mc{O}}_Y$ is compatible with its $\ov{\G}$-action. Moreover,
$\ov{\mc{O}}_Y$ and $\ov{\mc{O}}_Y^f$ are abelian categories.

We define $\wt{\mc{O}}^{\bar{0}}_Y$ and $\wt{\mc{O}}^{f,{\bar{0}}}_Y$ to be the full
subcategories of $\wt{\mc{O}}_Y$ and $\wt{\mc{O}}_Y^f$,
respectively, consisting
of objects with $\Z_2$-gradations given by \eqnref{wt-Z2-gradation} (c.f.~\cite[\S4-e]{B}). Note that the morphisms in $\wt{\mc{O}}^{\bar{0}}_Y$ and
$\wt{\mc{O}}^{f, {\bar{0}}}_Y$ are of degree $\bar{0}$. For $\wt{M}\in\wt{\mc{O}}_Y$,
it is clear that
$\widehat{\wt{M}}$ is
isomorphic to $\wt{M}$ in $\wt{\mc{O}}_Y$. Thus
$\wt{\mc{O}}_Y$ and $\wt{\mc{O}}^{\bar{0}}_Y$ have isomorphic skeletons and hence they are equivalent categories.  Similarly, $\wt{\mc{O}}^{f}_Y$ and $\wt{\mc{O}}_Y^{f, {\bar{0}}}$ are equivalent
categories.

Analogously define $\ov{\mc{O}}^{\bar{0}}_Y$ and $\ov{\mc{O}}^{f,{\bar{0}}}_Y$ to be the respective full
subcategories of $\ov{\mc{O}}_Y$ and $\ov{\mc{O}}_Y^f$ consisting
of objects with $\Z_2$-gradations given by \eqnref{ov-Z2-gradation}. Similarly the morphisms in $\ov{\mc{O}}^{\bar{0}}_Y$ and
$\ov{\mc{O}}^{f,{\bar{0}}}_Y$ are of degree $\bar{0}$. Moreover, $\ov{\mc{O}}^{\bar{0}}_Y$ and $\ov{\mc{O}}_Y$ are equivalent categories, and $\ov{\mc{O}}^{f, {\bar{0}}}_Y$ and $\ov{\mc{O}}_Y^f$ are equivalent
categories.

\section{Odd Reflection and character formulae}\label{section3}
We shall briefly explain the effect of an odd reflection on the
highest weight of an irreducible module that was studied in
\cite[Lemma 1]{PS} (see also \cite[Lemma 1.4]{KW2}). Fix a Borel subalgebra $\mc{B}$ with corresponding set of positive roots $\Delta_+(\mc{B})$. Let $\alpha$
be an isotropic odd simple root and $\alpha^\vee$ be its
corresponding coroot. Applying the odd reflection with respect to
$\alpha$ changes the Borel subalgebra $\mc{B}$ into a new Borel subalgebra $\mc{B}(\alpha)$ with corresponding set of positive roots $\Delta_+(\mc{B}(\alpha))=\{-\alpha\}\cup\Delta_+(\mc{B})\setminus\{\alpha\}$.
Now let $\la$ be the highest weight with respect to $\mc{B}$ of an irreducible module.  If
$\langle\la,\alpha^\vee\rangle\not=0$, then the highest weight of
this irreducible module with respect to $\mc{B}(\alpha)$
is $\la-\alpha$.  If $\langle\la,\alpha^\vee\rangle=0$, then the
highest weight remains unchanged.  In the sequel we will sometimes
refer to the highest weight with respect to the new Borel
subalgebra as the {\em new highest weight}.

\subsection{Odd reflection and a fundamental lemma}

\subsubsection{A sequence of odd reflections and $\wt{\Pi}^c(n)$}\label{afterkk}
Starting with the Dynkin diagram (D1) of \secref{aux122} and given a positive integer $n$
we apply the following sequence of $\frac{n(n+1)}{2}$ odd reflections. First we apply one
odd reflection corresponding to $\epsilon_{1/2}-\epsilon_1$, then we apply two odd
reflections corresponding to $\epsilon_{3/2}-\epsilon_{2}$ and
$\epsilon_{1/2}-\epsilon_{2}$.  After that we apply three odd reflections corresponding
to $\epsilon_{5/2}-\epsilon_{3}$, $\epsilon_{3/2}-\epsilon_{3}$, and
$\epsilon_{1/2}-\epsilon_{3}$, et cetera, until finally we apply $n$ odd reflections
corresponding to
$\epsilon_{n-1/2}-\epsilon_{n},\epsilon_{n-3/2}-\epsilon_{n},\cdots,\epsilon_{1/2}-\epsilon_{n}$.
The resulting new Borel subalgebra for $\DG$ will be denoted by $\wt{\mf{b}}^c(n)$ and
the corresponding simple roots are
\begin{equation*}
\wt{\Pi}^c(n):=\{\alpha_{-m},\cdots,\alpha_{-2};\beta_{-1};\beta_1,\cdots,\beta_{n-1};\epsilon_{n}-\epsilon_{1/2};
\beta_{1/2},\cdots,
\beta_{n-1/2};\alpha_{n+1/2},\alpha_{n+1},\cdots\}.
\end{equation*}
\begin{center}
\hskip -3cm \setlength{\unitlength}{0.16in}
\begin{picture}(24,3)
\put(0,2){\makebox(0,0)[c]{$\bigcirc$}} \put(2.4,2){\makebox(0,0)[c]{$\bigcirc$}}
\put(6,2){\makebox(0,0)[c]{$\bigcirc$}} \put(8.4,2){\makebox(0,0)[c]{$\bigcirc$}}
\put(12.85,2){\makebox(0,0)[c]{$\bigcirc$}} \put(15.25,2){\makebox(0,0)[c]{$\bigotimes$}}
\put(17.4,2){\makebox(0,0)[c]{$\bigcirc$}} \put(21.9,2){\makebox(0,0)[c]{$\bigcirc$}}
\put(24.4,2){\makebox(0,0)[c]{$\bigotimes$}} \put(26.8,2){\makebox(0,0)[c]{$\bigotimes$}}
\put(0.4,2){\line(1,0){1.55}} \put(2.8,2){\line(1,0){1}} \put(6.4,2){\line(1,0){1.55}}
\put(8.82,2){\line(1,0){0.8}}
\put(11.2,2){\line(1,0){1.2}} \put(13.28,2){\line(1,0){1.45}}
\put(15.7,2){\line(1,0){1.25}} \put(17.8,2){\line(1,0){0.9}}
\put(20.1,2){\line(1,0){1.4}} \put(22.35,2){\line(1,0){1.6}}
\put(24.9,2){\line(1,0){1.5}} \put(27.3,2){\line(1,0){1.5}}
\put(4.7,1.95){\makebox(0,0)[c]{$\cdots$}} \put(10.5,1.95){\makebox(0,0)[c]{$\cdots$}}
\put(19.5,1.95){\makebox(0,0)[c]{$\cdots$}} \put(29.7,1.95){\makebox(0,0)[c]{$\cdots$}}
\put(0,1){\makebox(0,0)[c]{\tiny $\alpha_{-m}$}} \put(2.3,1){\makebox(0,0)[c]{\tiny
$\alpha_{-m+1}$}} \put(6.1,1){\makebox(0,0)[c]{\tiny $\alpha_{-2}$}}
\put(8.5,1){\makebox(0,0)[c]{\tiny $\beta_{-1}$}} \put(12.8,1){\makebox(0,0)[c]{\tiny
$\beta_{n-1}$}} \put(15.2,1){\makebox(0,0)[c]{\tiny $\epsilon_n-\epsilon_{1/2}$}}
\put(17.8,1){\makebox(0,0)[c]{\tiny $\beta_{1/2}$}} \put(22,1){\makebox(0,0)[c]{\tiny
$\beta_{n-1/2}$}} \put(24.4,1){\makebox(0,0)[c]{\tiny $\alpha_{n+1/2}$}}
\put(27,1){\makebox(0,0)[c]{\tiny $\alpha_{n+1}$}}
\end{picture}
\end{center}
A Borel subalgebra is completely determined by giving an ordered homogeneous basis for
the standard module. The ordered basis corresponding to the Borel subalgebra of
$\wt{\Pi}^c(n)$ is given below.
\begin{align*}
\{v_{-m},\ldots, v_{-1}, v_1, v_2, \ldots, v_{n-1}, v_n, v_{1/2}, v_{3/2},\ldots,
v_{n+1/2}, v_{n+1}, v_{n+3/2},v_{n+2}, v_{n+5/2},\ldots\}.
\end{align*}

\subsubsection{A sequence of odd reflections and $\wt{\Pi}^s(n)$}
On the other hand given (D1) and $n$ we can also apply the following different sequence
of $\frac{n(n+1)}{2}$ odd reflections. First we apply one odd reflection corresponding to
$\epsilon_{1}-\epsilon_{3/2}$, then we apply two odd reflections corresponding to
$\epsilon_{2}-\epsilon_{5/2}$ and $\epsilon_{1}-\epsilon_{5/2}$.  After that we apply
three odd reflections corresponding to $\epsilon_{3}-\epsilon_{7/2}$,
$\epsilon_{2}-\epsilon_{7/2}$, and $\epsilon_{1}-\epsilon_{7/2}$, et cetera, until
finally we apply $n$ odd reflections corresponding to
$\epsilon_{n}-\epsilon_{n+1/2},\epsilon_{n-1}-\epsilon_{n+1/2},\cdots,\epsilon_{1}-\epsilon_{n+1/2}$.
The resulting new Borel subalgebra for $\DG$ will be denoted by $\wt{\mf{b}}^s(n)$ and
the corresponding simple roots are
\begin{equation*}
\wt{\Pi}^s(n):=\{\alpha_{-m},\cdots,\alpha_{-2};\alpha_{-1};\beta_{1/2},\cdots
\beta_{n-1/2};\epsilon_{n+1/2}-\epsilon_{1};\beta_1,\cdots,\beta_{n};
\alpha_{n+1},\alpha_{n+3/2},\cdots\}.
\end{equation*}
\begin{center}
\hskip -3cm \setlength{\unitlength}{0.16in}
\begin{picture}(24,3)
\put(0,2){\makebox(0,0)[c]{$\bigcirc$}} \put(4.4,2){\makebox(0,0)[c]{$\bigcirc$}}
\put(6.3,2){\makebox(0,0)[c]{$\bigotimes$}} \put(8.4,2){\makebox(0,0)[c]{$\bigcirc$}}
\put(12.85,2){\makebox(0,0)[c]{$\bigcirc$}} \put(15.25,2){\makebox(0,0)[c]{$\bigotimes$}}
\put(17.4,2){\makebox(0,0)[c]{$\bigcirc$}} \put(21.9,2){\makebox(0,0)[c]{$\bigcirc$}}
\put(24.4,2){\makebox(0,0)[c]{$\bigotimes$}} \put(26.8,2){\makebox(0,0)[c]{$\bigotimes$}}
\put(0.4,2){\line(1,0){1.2}} \put(3,2){\line(1,0){1}} \put(4.8,2){\line(1,0){1.1}}
\put(6.65,2){\line(1,0){1.2}} \put(8.82,2){\line(1,0){0.8}}
\put(11.2,2){\line(1,0){1.2}} \put(13.28,2){\line(1,0){1.45}}
\put(15.7,2){\line(1,0){1.25}} \put(17.8,2){\line(1,0){0.9}}
\put(20.1,2){\line(1,0){1.4}} \put(22.35,2){\line(1,0){1.6}}
\put(24.9,2){\line(1,0){1.5}} \put(27.3,2){\line(1,0){1.5}}
\put(2.6,1.95){\makebox(0,0)[c]{$\cdots$}} \put(10.5,1.95){\makebox(0,0)[c]{$\cdots$}}
\put(19.5,1.95){\makebox(0,0)[c]{$\cdots$}} \put(29.7,1.95){\makebox(0,0)[c]{$\cdots$}}
\put(0,1){\makebox(0,0)[c]{\tiny $\alpha_{-m}$}} \put(4.3,1){\makebox(0,0)[c]{\tiny
$\alpha_{-2}$}} \put(6.2,1){\makebox(0,0)[c]{\tiny $\alpha_{-1}$}}
\put(8.5,1){\makebox(0,0)[c]{\tiny $\beta_{1/2}$}} \put(12.5,1){\makebox(0,0)[c]{\tiny
$\beta_{n-1/2}$}} \put(15.5,1){\makebox(0,0)[c]{\tiny $\epsilon_{n+1/2}-\epsilon_{1}$}}
\put(17.8,1){\makebox(0,0)[c]{\tiny $\beta_{1}$}} \put(22,1){\makebox(0,0)[c]{\tiny
$\beta_{n}$}} \put(24.4,1){\makebox(0,0)[c]{\tiny $\alpha_{n+1}$}}
\put(27,1){\makebox(0,0)[c]{\tiny $\alpha_{n+3/2}$}}
\end{picture}
\end{center}
The ordered basis corresponding to the Borel subalgebra of $\wt{\Pi}^s(n)$ is given
below.
\begin{align*}
\{v_{-m},\ldots,v_{-1}, v_{1/2}, v_{3/2},\ldots,v_{n-1/2}, v_{n+1/2}, v_1,
v_2, \ldots, v_n, v_{n+1}, v_{n+3/2},v_{n+2}, v_{n+5/2},\ldots\}.
\end{align*}

\begin{rem}\label{aux:rem}
We note that the simple roots used in the above two sequences of odd reflections are all
roots of $\wt{\mf{l}}_Y$ and hence these sequences of odd reflections leave the set of
roots of $\wt{\mf{u}}_Y$ invariant.
\end{rem}

We denote by ${\wt{\mf{b}}_Y^c}(n)$ and ${\wt{\mf{b}}_Y^s}(n)$ the Borel subalgebras of
$\wt{\mf{l}}_Y$ corresponding to the sets of simple roots
$\wt{\Pi}^c(n)\cap\wt{\Delta}_Y$ and $\wt{\Pi}^s(n)\cap\wt{\Delta}_Y$, respectively.

\subsubsection{A fundamental lemma}

\begin{lem}\label{lem:change} Given $\la\in\mc{P}_Y$, let $n\in\N$.
\begin{itemize}
\item[(i)] Suppose that $\ell(\la_+)\le n$. Then the highest weight of
$L(\wt{\mf{l}}_Y,\la^\theta)$ with respect to the Borel subalgebra
$\wt{\mf{b}}_Y^c(n)$ is $\la$, regarded as an element in ${\wt{\mc{P}}_Y}$.
\item[(ii)] Suppose that $\ell(\la_+')\le n$. Then the highest weight of
    $L(\wt{\mf{l}}_Y,\la^\theta)$ with respect to the Borel subalgebra
    $\wt{\mf{b}}_Y^s(n)$ is $\la^\natural$, regarded as an element in
    ${\wt{\mc{P}}_Y}$.
\end{itemize}
\end{lem}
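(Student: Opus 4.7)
The plan is to track how the highest weight of $L(\wt{\mf{l}}_Y,\la^\theta)$ transforms as we apply the prescribed sequences of odd reflections. Recall that at each stage an odd isotropic simple root $\alpha=\epsilon_r-\epsilon_s$ changes the current highest weight $\mu$ to $\mu-\alpha$ if $\mu_r+\mu_s\neq 0$, and leaves $\mu$ unchanged otherwise. By \remref{aux:rem} every reflection in the two sequences lies inside $\wt{\mf{l}}_Y$ and modifies only coefficients at $\epsilon_r$ with $r\in\hf\N$, so the $\la^{<0}$-part is preserved throughout. Write $\mu:=\la_+$ and $\mu':=(\la_+)'$.

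For Part~(i), I will prove by induction on $i\in\{0,1,\ldots,n\}$ the following invariant: after completing the $i$ reflections of row $i$, namely $\epsilon_{i-1/2}-\epsilon_i,\ \epsilon_{i-3/2}-\epsilon_i,\ \ldots,\ \epsilon_{1/2}-\epsilon_i$, the current highest weight equals
\begin{equation*}
\la^{<0}+\sum_{k=1}^{i}\mu_k\epsilon_k+\sum_{k>i}\langle\mu_k-k\rangle\epsilon_k+\sum_{j=1}^{i}\max(\mu'_j-i,0)\epsilon_{j-1/2}+\sum_{j>i}\langle\mu'_j-(j-1)\rangle\epsilon_{j-1/2}.
\end{equation*}
At $i=0$ this is exactly $\la^\theta$, and at $i=n$ the hypothesis $\ell(\mu)\le n$ (which forces $\mu'_j\le n$ for every $j$ and $\mu_k=0$ for $k>n$) annihilates every half-integer coefficient and identifies the resulting weight as $\la\in\wt{\mc{P}}_Y$, proving (i).

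The inductive step is the main computation. Fixing $i$ and running the reflections for $j=i,i-1,\ldots,1$, just before the reflection $\epsilon_{j-1/2}-\epsilon_i$ the coefficient at $\epsilon_{j-1/2}$ still equals $\max(\mu'_j-(i-1),0)$ (the slot has not yet been touched in row $i$), while the coefficient at $\epsilon_i$ equals $\langle\mu_i-i\rangle$ plus the number of earlier reflections of row $i$ that had nontrivial effect. A case analysis on whether $\mu_i\ge i$ or $\mu_i<i$ shows that the reflection at $j$ is effective precisely when the box $(i,j)$ lies in the Young diagram of $\mu$, equivalently $\mu'_j\ge i$; the key identity is $\max(\mu'_j-(i-1),0)-\max(\mu'_j-i,0)=\mathbf{1}_{\mu'_j\ge i}$, which is exactly the decrement the invariant demands at the half-integer slot. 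Since the total number of effective reflections in row $i$ equals $\#\{j\in[1,i]:\mu'_j\ge i\}=\min(\mu_i,i)$, the coefficient at $\epsilon_i$ ends at $\langle\mu_i-i\rangle+\min(\mu_i,i)=\mu_i$, completing the step.

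Part~(ii) is dual: the sequence defining $\wt{\mf{b}}^s(n)$ uses roots $\epsilon_i-\epsilon_{j-1/2}$ in place of $\epsilon_{j-1/2}-\epsilon_i$, and the analogous double induction, now transferring box counts into the half-integer slots with the roles of $\mu$ and $\mu'$ (and of ``row'' and ``column'' in the Young diagram) interchanged, produces $\la^\natural$ under the hypothesis $\ell(\mu')\le n$. The main obstacle is phrasing the inductive invariant so that the case analysis at each reflection cleanly identifies effectiveness; writing half-integer coefficients via $\max(\cdot,0)$ rather than $\langle\cdot\rangle$ is essential, because $\max$ interacts transparently both with the decrement performed by an effective reflection and with the stalled behavior once $\mu'_j<i$.
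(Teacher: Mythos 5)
Your proposal is correct and takes essentially the same route as the paper: your row-by-row invariant is precisely the paper's formula \eqnref{afterk} for $\la_{[k]}$ (note $\max(m,0)=\langle m\rangle$ in the paper's notation), and your case split $\mu_i\ge i$ versus $\mu_i<i$, with the observation that a reflection is effective exactly when the corresponding box lies in the Young diagram, is the same two-case inductive step carried out in the paper's proof. Part (ii) is handled in both by the same "interchange $\mu$ and $\mu'$" duality, so there is nothing to add.
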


\begin{proof} We shall only give the proof for (i), as (ii) is analogous.

Certainly $\la^{<0}$ is unaffected by the sequence of odd
reflections in \secref{afterkk}. We will show more generally by induction on $k$ that after applying the
first $k(k+1)/2$ odd reflections in \secref{afterkk} this weight
becomes
\begin{equation}\label{afterk}
\la_{[k]}=\la^{<0}+\sum_{i=1}^k\la_i\epsilon_i+\sum_{i=1}^k\langle(\la_+)'_i-k\rangle\epsilon_{i-1/2}+
\sum_{j\ge k+1}\langle(\la_+)'_{j}-j+1\rangle\epsilon_{j-1/2} +
\sum_{j\ge k+1}\langle\la_j-j\rangle\epsilon_j.
\end{equation}
From \eqnref{afterk} the lemma follows.

Suppose that $k=1$.
If $\ell(\la_+)< 1$, then $\la^\theta=\la^{<0}$ and in particular
$\langle\la^\theta,E_{1/2,1/2}+E_{11}\rangle=0$, and thus the
new highest weight is $\la_{[1]}=\la^{<0}=\la^\theta$.  If
$\ell(\la_+)\ge 1$, then $(\la_+)_1'\ge 1$ and $\la_1\ge 1$ and
thus
\begin{equation*}
\la^\theta=\la^{<0}+(\la_+)_1'\epsilon_{1/2}+(\la_1-1)\epsilon_1+\cdots.
\end{equation*}
Now $\langle\la^\theta,E_{1/2,1/2}+E_{11}\rangle>0$, and hence the
highest weight after the odd reflection with respect to
$\epsilon_{1/2}-\epsilon_1$ is
\begin{equation*}
\la_{[1]}=\la^{<0}+\la_1\epsilon_1+((\la_+)_1'-1)\epsilon_{1/2}+\cdots,
\end{equation*}
proving \eqnref{afterk} in the case $k=1$.

Now suppose that \eqnref{afterk} is true for $k$.  We shall derive
the formula for $k+1$. If $\ell(\la_+)\le k$, then
$\la_{[k]}=\la^{<0}+\sum_{i=1}^k\la_i\epsilon_i$. Therefore we
have $\langle\la_{[k]},E_{i-1/2,i-1/2}+E_{k+1,k+1}\rangle=0$,
for $1\le i\le k+1$. So the odd reflections with respect to
$\epsilon_{i-1/2}-\epsilon_{k+1}$ do not affect $\la_{[k]}$.
Thus we have $\la_{[k+1]}=\la_{[k]}$. So in this case we are
done.

Now assume that $\ell(\la_+)\ge k+1$. Let $s=\la_{k+1}$. We
distinguish two cases.

First suppose that $\la_{k+1}\ge k+1$.  Then $(\la_+)'_{k+1}\ge
k+1$ and hence \eqnref{afterk} becomes
\begin{align*}
\la_{[k]}=&\la^{<0}+\sum_{i=1}^k\la_i\epsilon_i
+\sum_{i=1}^k((\la_+)'_i-k)\epsilon_{i-1/2} +
((\la_+)'_{k+1}-k)\epsilon_{k+1/2}\\ &+
(\la_{k+1}-k-1)\epsilon_{k+1} + \sum_{j\ge
k+2}\langle(\la_+)'_{j}-j+1\rangle\epsilon_{j-1/2} + \sum_{j\ge
k+2}\langle\la_j-j\rangle\epsilon_j.
\end{align*}
Now $\langle\la_{[k]},E_{k+1/2,k+1/2}+E_{k+1,k+1}\rangle>0$ so
that after the odd reflection with respect to
$\epsilon_{k+1/2}-\epsilon_{k+1}$ the new weight becomes
\begin{align*}
\la_{[k,1]}=&\la^{<0}+\sum_{i=1}^k\la_i\epsilon_i
+\sum_{i=1}^k((\la_+)'_i-k)\epsilon_{i-1/2} +
((\la_+)'_{k+1}-k-1)\epsilon_{k+1/2}\\& +
(\la_{k+1}-k)\epsilon_{k+1} + \sum_{j\ge
k+2}\langle(\la_+)'_{j}-j+1\rangle\epsilon_{j-1/2} + \sum_{j\ge
k+2}\langle\la_j-j\rangle\epsilon_j.
\end{align*}
Now $\langle\la_{[k,1]},E_{k-1/2,k-1/2}+E_{k+1,k+1}\rangle>0$ so
after the odd reflection with respect to
$\epsilon_{k-1/2}-\epsilon_{k+1}$ we get
\begin{align*}
\la_{[k,2]}=&\la^{<0}+\sum_{i=1}^k\la_i\epsilon_i+
\sum_{i=1}^{k-1}((\la_+)'_i-k)\epsilon_{i-1/2}
+((\la_+)'_{k}-k-1)\epsilon_{k-1/2} \\
&+((\la_+)'_{k+1}-k-1)\epsilon_{k+1/2}
+(\la_{k+1}-k+1)\epsilon_{k+1}\\& + \sum_{j\ge
k+2}\langle(\la_+)'_{j}-j+1\rangle\epsilon_{j-1/2} + \sum_{j\ge
k+2}\langle\la_j-j\rangle\epsilon_j.
\end{align*}
Finally after a total of $k+1$ odd reflections we end up with
\begin{align*}
\la_{[k,k+1]}=&\la^{<0}+\sum_{i=1}^{k+1}\la_i\epsilon_i+\sum_{i=1}^{k+1}((\la_+)'_i-k-1)\epsilon_{i-1/2}\\
&+ \sum_{j\ge k+2}\langle(\la_+)'_{j}-j+1\rangle\epsilon_{j-1/2} +
\sum_{j\ge k+2}\langle\la_j-j\rangle\epsilon_j,
\end{align*}
which equals $\la_{[k+1]}$.

Now consider the case $\la_{k+1}=s<k+1$. We have $(\la_+)'_j\ge
k+1$, for $j\le s$ and $(\la_+)'_j<k+1$, for $j>s$. Thus
\eqnref{afterk} becomes
\begin{align*}
\la_{[k]}=\la^{<0}+\sum_{i=1}^k\la_i\epsilon_i+\sum_{i=1}^s((\la_+)'_i-k)\epsilon_{i-1/2},
\end{align*}
where $((\la_+)'_i-k)>0$, for $i\le s$. It follows that odd
reflections with respect to
$\epsilon_{k+1/2}-\epsilon_{k+1},\cdots,\epsilon_{s+1/2}-\epsilon_{k+1}$
do not affect $\la_{[k]}$, while odd refections with respect to
$\epsilon_{s-1/2}-\epsilon_{k+1},\cdots,\epsilon_{1/2}-\epsilon_{k+1}$
affect $\la_{[k]}$. From this we obtain
\begin{align*}
\la_{[k+1]}=\la_{[k,k+1]}=\la^{<0}+\sum_{i=1}^{k+1}\la_i\epsilon_i+\sum_{i=1}^s((\la_+)'_i-k-1)\epsilon_{i-1/2},
\end{align*}
which concludes the proof.
\end{proof}

\begin{cor}\label{aux:cor1} Let $\la\in\mc{P}_Y$ and $n\in\N$.
\begin{itemize}
\item[(i)] Suppose that $\ell(\la_+)\le n$. Then $\wt{K}(\la^\theta)$ is a highest
    weight module with respect to the Borel subalgebra $\wt{\mf{b}}^c(n)$ with
    highest weight $\la$, regarded as an element in ${\wt{\mc{P}}_Y}$. Also the
    highest weight of $\wt{L}(\la^\theta)$ with respect to the Borel subalgebra
    $\wt{\mf{b}}^c(n)$ is $\la$, regarded as an element in ${\wt{\mc{P}}_Y}$.
\item[(ii)] Suppose that $\ell(\la_+')\le n$. Then $\wt{K}(\la^\theta)$ is a highest
    weight module with respect to the Borel subalgebra $\wt{\mf{b}}^s(n)$ with
    highest weight $\la^\natural$, regarded as an element in ${\wt{\mc{P}}_Y}$. Also
    the highest weight of $\wt{L}(\la^\theta)$ with respect to the Borel subalgebra
    $\wt{\mf{b}}^s(n)$ is $\la^\natural$, regarded as an element in
    ${\wt{\mc{P}}_Y}$.
\end{itemize}
\end{cor}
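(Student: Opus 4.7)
The plan is to leverage \lemref{lem:change}, which only describes the effect of the odd reflections on the Levi factor $\wt{\mf{l}}_Y$, and to extend the conclusion from the irreducible Levi module $L(\wt{\mf{l}}_Y,\la^\theta)$ to the full $\DG$-modules $\wt{K}(\la^\theta)$ and $\wt{L}(\la^\theta)$. The key structural input is \remref{aux:rem}: every odd reflection used to pass from $\wt{\mf{b}}$ to $\wt{\mf{b}}^c(n)$ (respectively $\wt{\mf{b}}^s(n)$) is associated with a root lying in $\wt{\mf{l}}_Y$, so these reflections preserve the set of roots of $\wt{\mf{u}}_Y$. Consequently $\wt{\mf{p}}_Y$ remains a parabolic subalgebra with the same Levi $\wt{\mf{l}}_Y$ and the same nilpotent radical $\wt{\mf{u}}_Y$ with respect to the new Borel, and the new positive nilradical $\wt{\mf{n}}^c$ of $\wt{\mf{b}}^c(n)$ decomposes as the positive nilradical of $\wt{\mf{b}}_Y^c(n)$ inside $\wt{\mf{l}}_Y$ together with $\wt{\mf{u}}_Y$.

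For part (i), \lemref{lem:change}(i) supplies a highest weight vector $v'$ of $L(\wt{\mf{l}}_Y,\la^\theta)$ with respect to $\wt{\mf{b}}_Y^c(n)$ whose weight, viewed in $\wt{\h}^*$, equals $\la$ regarded as an element of $\wt{\mc{P}}_Y$. I would then examine $1\otimes v'\in\wt{K}(\la^\theta)={\rm Ind}_{\wt{\mf{p}}_Y}^{\DG} L(\wt{\mf{l}}_Y,\la^\theta)$. The decomposition $\wt{\mf{n}}^c=(\wt{\mf{n}}^c\cap\wt{\mf{l}}_Y)\oplus\wt{\mf{u}}_Y$ noted above shows at once that $\wt{\mf{n}}^c$ annihilates $1\otimes v'$: the $\wt{\mf{l}}_Y$-summand annihilates $v'$ by the choice of $v'$, while $\wt{\mf{u}}_Y\subseteq\wt{\mf{p}}_Y$ acts by zero on $1\otimes v'$ by construction of the induced module. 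Moreover, since $v'$ generates $L(\wt{\mf{l}}_Y,\la^\theta)$ over $\mc{U}(\wt{\mf{l}}_Y)$ and PBW gives $\wt{K}(\la^\theta)=\mc{U}((\wt{\mf{u}}_Y)_-)\cdot(1\otimes L(\wt{\mf{l}}_Y,\la^\theta))$, the element $1\otimes v'$ is a cyclic generator of $\wt{K}(\la^\theta)$ over $\mc{U}(\DG)$. Hence $\wt{K}(\la^\theta)$ is a highest weight $\DG$-module with respect to $\wt{\mf{b}}^c(n)$ of highest weight $\la$.

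For the statement about $\wt{L}(\la^\theta)$, I would use that $\wt{L}(\la^\theta)$ is the unique irreducible quotient of $\wt{K}(\la^\theta)$ (standard for a parabolic Verma module induced from an irreducible Levi module). Since $1\otimes v'$ cyclically generates $\wt{K}(\la^\theta)$, its image in $\wt{L}(\la^\theta)$ is nonzero, furnishing a weight vector of weight $\la$ annihilated by $\wt{\mf{n}}^c$; this vector is therefore the new highest weight vector of $\wt{L}(\la^\theta)$. Part (ii) is entirely parallel, replacing \lemref{lem:change}(i) by \lemref{lem:change}(ii), the Borel $\wt{\mf{b}}^c(n)$ by $\wt{\mf{b}}^s(n)$, and the new highest weight $\la$ by $\la^\natural$. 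There is no genuine obstacle here beyond keeping careful track of how parabolic induction interacts with a change of Borel taken along roots internal to the Levi, and this compatibility is exactly what \remref{aux:rem} isolates.
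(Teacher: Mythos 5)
Your proposal is correct and follows essentially the same route as the paper: both rest on \lemref{lem:change} together with \remref{aux:rem} (i.e.\ $\wt{\mf{b}}_Y^c(n)+\wt{\mf{u}}_Y=\wt{\mf{b}}^c(n)$) to exhibit a $\wt{\mf{b}}^c(n)$-singular vector of weight $\la$ in $\wt{K}(\la^\theta)$ that generates it over $\DG$. The only cosmetic difference is that you deduce the statement for $\wt{L}(\la^\theta)$ from its being the unique irreducible quotient of $\wt{K}(\la^\theta)$, whereas the paper repeats the same singular-vector argument verbatim inside $\wt{L}(\la^\theta)$; both are fine.
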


\begin{proof}
As an $\wt{\mf{l}}_Y$-module $\wt{K}(\la^\theta)$ contains a unique copy of
$L(\wt{\mf{l}}_Y,\la^\theta)$ that is annihilated by $\wt{\mf{u}}_Y$.  By
\lemref{lem:change} with respect to $\wt{\mf{b}}_Y^c(n)$ the highest weight of
$L(\wt{\mf{l}}_Y,\la^\theta)$ is $\la$. Now by \remref{aux:rem}
$\wt{\mf{b}}_Y^c(n)+\wt{\mf{u}}_Y=\wt{\mf{b}}^c(n)$. Thus $\wt{K}(\la^\theta)$ has a
non-zero vector of weight $\la$ annihilated by $\wt{\mf{b}}^c(n)$. This vector clearly
generates $\wt{K}(\la^\theta)$ over $\DG$, proving the first statement of (i).  A
verbatim argument proves the second statement as well.

Part (ii) is similar and so its proof is omitted.
\end{proof}

\iffalse
\begin{rem}
From the proof of \corref{aux:cor1} we see that the $\wt{\mf{b}}^c(n)$-highest weight
module $\wt{K}(\la^\theta)$ is the parabolic (corresponding to the set
$\wt{\Pi}^c(n)\setminus\{\beta_{-1}\}$) $\DG$-Verma module of highest weight $\la$.
Similarly the $\wt{\mf{b}}^s(n)$-highest weight module $\wt{K}(\la^\theta)$ is the
parabolic (corresponding to the set $\wt{\Pi}^s(n)\setminus\{\alpha_{-1}\}$) $\DG$-Verma
module of highest weight $\la^\natural$.
\end{rem}
\fi

\subsection{The Functors $T$ and $\ov{T}$}\label{Tfunctors} Recall that
$\wt{\Gamma}:=\sum_{j=-m}^{-1}\Z\epsilon_j+\sum_{r\in\hf\N}\Z\epsilon_r$ and
$\ov{\Gamma}:=\sum_{j=-m}^{-1}\Z\epsilon_j+\sum_{r\in\hf+\Z_+}\Z\epsilon_r$. Set
$\Gamma:=\sum_{j=-m}^{-1}\Z\epsilon_j+\sum_{i\in\N}\Z\epsilon_i$. Given a semisimple
$\wt{\h}$-module $\wt{M}$ such that
$\wt{M}=\bigoplus_{\gamma\in\wt{\Gamma}}\wt{M}_\gamma$, we define
\begin{align*}
T(\wt{M}):= \bigoplus_{\gamma\in\Gamma}\wt{M}_\gamma,\qquad
\hbox{and}\qquad \ov{T}(\wt{M}):=
\bigoplus_{\gamma\in\ov{\Gamma}}\wt{M}_\gamma.
\end{align*}
Note that $T(\wt{M})$ is an $\h$-submodule of $\wt{M}$ (regarded as an $\h$-module), and
$\ov{T}(\wt{M})$ is an $\ov{\h}$-submodule of $\wt{M}$ (regarded as an $\ov{\h}$-module).
Also if $\wt{M}$ is also an $\wt{\mf{l}}_Y$-module, then $T(\wt{M})$ is an
${\mf{l}}_Y$-submodule of $\wt{M}$ (regarded as an ${\mf{l}}_Y$-module), and
$\ov{T}(\wt{M})$ is an $\ov{\mf{l}}_Y$-submodule of $\wt{M}$ (regarded as an
$\ov{\mf{l}}_Y$-module). Furthermore if $\wt{M}\in\wt{\mc{O}}_Y$, then $T(\wt{M})$ is a
$\G$-submodule of $\wt{M}$ (regarded as a $\G$-module), and $\ov{T}(\wt{M})$ is a
$\SG$-submodule of $\wt{M}$ (regarded as a $\SG$-module).

Let $\wt{M}=\bigoplus_{\gamma\in\wt{\Gamma}}\wt{M}_\gamma$ and
$\wt{N}=\bigoplus_{\gamma\in\wt{\Gamma}}\wt{N}_\gamma$ be two
semisimple $\wt{\h}$-modules. We let
\begin{eqnarray*}
\CD
 T_{\wt{M}} :  \wt{M} @>>>T(\wt{M}) \qquad \hbox{and}\qquad  \ov{T}_{\wt{M}} :  \wt{M} @>>>\ov{T}(\wt{M})
 \endCD
\end{eqnarray*}
 be the natural projections. If $\wt{f}:\wt{M}\longrightarrow
\wt{N}$ is an $\wt{\h}$-homomorphism, we let \begin{eqnarray*} \CD
 T[\wt{f}] :  T(\wt{M}) @>>>T(\wt{N}) \qquad \hbox{and}\qquad  \ov{T}[\wt{f}] :  \ov{T}(\wt{M}) @>>>\ov{T}(\wt{N})
 \endCD
\end{eqnarray*}
be the corresponding restriction maps. Note that $T_{\wt{M}}$ and $T[\wt{f}]$
(respectively, $\ov{T}_{\wt{M}}$ and $\ov{T}[\wt{f}]$) are $\h$- (respectively,
$\ov{\h}$-) homomorphisms. Also if $\wt{f}$ is also $\wt{\mf{l}}_Y$-homomorphism of
$\wt{\mf{l}}_Y$-modules, then $T_{\wt{M}}$ and $T[\wt{f}]$ (respectively,
$\ov{T}_{\wt{M}}$ and $\ov{T}[\wt{f}]$) are ${\mf{l}}_Y$- (respectively,
$\ov{\mf{l}}_Y$-) homomorphisms. Furthermore if $\wt{f}$ is also $\wt{\G}$-homomorphism
of $\wt{\G}$-modules, then $T_{\wt{M}}$ and $T[\wt{f}]$ (respectively, $\ov{T}_{\wt{M}}$
and $\ov{T}[\wt{f}]$) are $\G$- (respectively, $\SG$-) homomorphisms. It is easy to see
that we have the following commutative diagrams.
\begin{eqnarray}\label{T}
\CD
  \wt{M} @>\wt{f}>>\wt{N} @. \qquad\qquad \wt{M} @>\wt{f}>>\wt{N} \\
  @VVT_{\wt{M}}V @VVT_{\wt{N}}V  \qquad\qquad @VV\ov{T}_{\wt{M}}V @VV\ov{T}_{\wt{N}}V\\
 T(\wt{M}) @>T[\wt{f}]>>T(\wt{N})@. \qquad\qquad \ov{T}(\wt{M}) @>\ov{T}[\wt{f}]>>\ov{T}(\wt{N})\\
 \endCD
\end{eqnarray}

For an indeterminate $e$ we let $x_r:=e^{\epsilon_r}$,
$r\in[-m,-1]\cup\hf\N$.  The formal character of an object in
$\wt{\mc{O}}_Y$, $\ov{\mc{O}}_Y$, and $\mc{O}_Y$ is then an
element in $\Z[[x_{-m}^{\pm 1},\cdots,x_{-1}^{\pm
1}]]\otimes\Z[[x_{1/2},x_1,\cdots]]$, $\Z[[x_{-m}^{\pm
1},\cdots,x_{-1}^{\pm 1}]]\otimes\Z[[x_{1/2},x_{3/2},\cdots]]$,
and $\Z[[x_{-m}^{\pm 1},\cdots,x_{-1}^{\pm
1}]]\otimes\Z[[x_1,x_2,\cdots]]$, respectively. For $\la\in
\mc{P}_Y$, we remark that the character of
$L(\wt{\mf{l}}_Y,\la^\theta)$ is given by \cite[Section 3.2.3]{CK}
\begin{equation}\label{eqn:hookschur}
{\rm ch}L(\wt{\mf{l}}_Y,\la^\theta)={\rm
ch}L(\wt{\mf{l}}^{<0}_Y,\la^{<0})HS_{\la_+'}(x_{1/2},x_1,x_{3/2},x_2,\cdots),
\end{equation}
where (c.f.~\cite{S, BR})
\begin{equation*}
HS_{\eta}(x_{1/2},x_1,x_{3/2},x_2,\cdots):=\sum_{\mu\subseteq\eta}s_{\mu}(x_{1/2},x_{3/2},\cdots)
s_{(\eta/\mu)'}(x_1,x_2,\cdots),\quad\eta\in\mc{P}.
\end{equation*}
Here and below $L(\wt{\mf{l}}^{<0}_Y,\la^{<0})$ stands for the irreducible
$\wt{\mf{l}}_Y^{<0}$-module of highest weight $\la^{<0}$ and so
${\rm ch}L(\wt{\mf{l}}^{<0}_Y,\la^{<0})$ is a product of Schur
Laurent polynomials in $x_{-m},\cdots,x_{-1}$ depending on $Y$ and
$\la^{<0}$.

\begin{lem}\label{lem:aux1} For $\la\in\mc{P}_Y$, we have
\begin{itemize}
\item[(i)] $T(L(\wt{\mf{l}}_Y,\la^\theta))=L(\mf{l}_Y,\la)$,
\item[(ii)] $\ov{T}(L(\wt{\mf{l}}_Y,\la^\theta))=L(\ov{\mf{l}}_Y,\la^\natural)$.
\end{itemize}
\end{lem}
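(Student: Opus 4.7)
The plan is to prove both parts by the same two-step strategy: use \lemref{lem:change} to locate an $\mf{l}_Y$- (respectively $\ov{\mf{l}}_Y$-) highest weight vector of the correct weight inside $L(\wt{\mf{l}}_Y,\la^\theta)$, then match formal characters via the hook Schur specialization \eqnref{eqn:hookschur} to upgrade the resulting inclusion to an equality of modules.

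For part (i), I would fix $n\in\N$ with $\ell(\la_+)\le n$. By \lemref{lem:change}(i) there is a nonzero $v\in L(\wt{\mf{l}}_Y,\la^\theta)$ of weight $\la$ (regarded as an element of $\wt{\h}^*$) annihilated by the nilradical of $\wt{\mf{b}}_Y^c(n)$. Reading off the ordered basis in \secref{afterkk}, the integer-indexed vectors $v_{-m},\ldots,v_{-1},v_1,\ldots,v_n,v_{n+1},\ldots$ appear in the standard order, so $\wt{\mf{b}}_Y^c(n)\cap\G=\mf{l}_Y\cap\mf{b}$. Therefore $v$ is an $\mf{l}_Y$-highest weight vector of weight $\la$, and since $\la$ is supported on integer indices we have $v\in T(L(\wt{\mf{l}}_Y,\la^\theta))$; thus $\mc{U}(\mf{l}_Y)v$ is a highest weight $\mf{l}_Y$-module of weight $\la$ sitting inside $T(L(\wt{\mf{l}}_Y,\la^\theta))$.

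Next I would compare characters. Specializing \eqnref{eqn:hookschur} by setting $x_{1/2}=x_{3/2}=\cdots=0$ kills every term of $HS_{\la_+'}$ except the one indexed by $\mu=\emptyset$, yielding $s_{(\la_+')'}=s_{\la_+}$ in the integer variables. Thus
\[
{\rm ch}\,T(L(\wt{\mf{l}}_Y,\la^\theta))={\rm ch}\,L(\wt{\mf{l}}_Y^{<0},\la^{<0})\cdot s_{\la_+}(x_1,x_2,\cdots),
\]
which agrees with ${\rm ch}\,L(\mf{l}_Y,\la)$ because $L(\mf{l}_Y,\la)$ is the outer tensor product of $L(\wt{\mf{l}}_Y^{<0},\la^{<0})$ with the $\gl$-irreducible on $\{v_i\}_{i\in\N}$ of highest weight $\la_+$. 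Combining the surjection $\mc{U}(\mf{l}_Y)v\twoheadrightarrow L(\mf{l}_Y,\la)$ with the inclusion $\mc{U}(\mf{l}_Y)v\subseteq T(L(\wt{\mf{l}}_Y,\la^\theta))$ forces all three characters to coincide, giving $T(L(\wt{\mf{l}}_Y,\la^\theta))=\mc{U}(\mf{l}_Y)v\cong L(\mf{l}_Y,\la)$. Part (ii) is entirely parallel, using \lemref{lem:change}(ii) with $n\ge\ell(\la_+')$ and $\wt{\mf{b}}_Y^s(n)$---whose half-integer-indexed basis vectors $v_{-m},\ldots,v_{-1},v_{1/2},\ldots,v_{n+1/2},v_{n+3/2},\ldots$ inherit the standard order, so $\wt{\mf{b}}_Y^s(n)\cap\SG=\ov{\mf{l}}_Y\cap\ov{\mf{b}}$---together with the complementary specialization $x_1=x_2=\cdots=0$ in \eqnref{eqn:hookschur}, which isolates the $\mu=\la_+'$ summand and yields $s_{\la_+'}(x_{1/2},x_{3/2},\cdots)$, matching ${\rm ch}\,L(\ov{\mf{l}}_Y,\la^\natural)$. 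The only genuinely nontrivial input---tracking the highest weight through the two sequences of odd reflections---has already been carried out in \lemref{lem:change}, so no substantial obstacle remains.
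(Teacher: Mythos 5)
Your proof is correct and follows essentially the same route as the paper: the heart of both arguments is the specialization of the hook Schur character \eqnref{eqn:hookschur} (setting the half-integer, respectively integer, variables to zero) to identify ${\rm ch}\,T(L(\wt{\mf{l}}_Y,\la^\theta))$ with ${\rm ch}\,L(\mf{l}_Y,\la)$, respectively ${\rm ch}\,\ov{T}(L(\wt{\mf{l}}_Y,\la^\theta))$ with ${\rm ch}\,L(\ov{\mf{l}}_Y,\la^\natural)$. Your detour through \lemref{lem:change} is harmless but not needed: once the characters agree, any nonzero vector of weight $\la$ in $T(L(\wt{\mf{l}}_Y,\la^\theta))$ is automatically a highest weight vector, which is the implicit content of the paper's remark that irreducibility of $L(\mf{l}_Y,\la)$ reduces the lemma to a character comparison.
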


\begin{proof} Since $L(\mf{l}_Y,\la)$ is irreducible,
it is enough to prove that $T(L(\wt{\mf{l}}_Y,\la^\theta))$ and
$L(\mf{l}_Y,\la)$ have the same character.

Applying $T$ to $L(\wt{\mf{l}}_Y,\la^\theta)$ has the effect of
setting $x_{j-{1/2}}=0$, $j\in\N$, in the character.  Thus we have
\begin{equation*}
{\rm ch}T\big{(}L(\wt{\mf{l}}_Y,\la^\theta)\big{)} ={\rm
ch}L(\wt{\mf{l}}^{<0}_Y,\la^{<0})s_{\la_+}(x_1,x_2,\cdots),
\end{equation*}
which is the character of $L(\mf{l}_Y,\la)$. This proves (i).

The proof for (ii) is analogous and hence omitted.
\end{proof}

\begin{lem}\label{high:to:high}
If $\wt{M}$ is a highest weight $\DG$-module of highest weight
$\la^\theta$ with $\la\in\mc{P}_Y$, then $T(\wt{M})$ and
$\ov{T}(\wt{M})$ are highest weight $\G$- and $\SG$-modules of
highest weight $\la\in \mc{P}_Y$ and $\la^\natural\in{\ov{\mc{P}}_Y}$,
respectively.
\end{lem}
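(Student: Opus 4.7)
The plan is to deduce the lemma from \corref{aux:cor1} together with a weight–space analysis using the parabolic structure. First I would apply \corref{aux:cor1}(i) with any $n \geq \ell(\la_+)$: this produces a vector $\wt v_c \in \wt M$ of weight $\la$ that is a highest weight vector with respect to the Borel $\wt{\mf b}^c(n)$. Each odd reflection preserves the property of being a cyclic highest weight module (the new h.w.~vector is either the old one or its image under a negative isotropic odd root vector, and in the latter case applying the corresponding positive root vector recovers the old one, since the relevant coroot pairing is nonzero), so $\wt M = \mc U(\DG)\,\wt v_c$. Inspecting the ordered basis in Section 3.1.1, the simple roots $\alpha_{-m},\ldots,\alpha_{-2},\beta_{-1},\beta_1,\ldots,\beta_{n-1}$ of $\mf b$ lie in $\wt{\Pi}^c(n)$, while each $\beta_k$ for $k\ge n$ is a non–negative integer combination of simple roots in $\wt{\Pi}^c(n)$ (telescoping through the root $\epsilon_n-\epsilon_{1/2}$ and the half–integer segment); hence $\mf n\subseteq\wt{\mf n}^c(n)$. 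Therefore $\wt v_c$ is annihilated by $\mf n$. Since $\la\in\Gamma$, we also have $\wt v_c\in T(\wt M)$, and consequently $\mc U(\G)\,\wt v_c$ is a highest weight $\G$–submodule of $T(\wt M)$ of highest weight $\la$.

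The main obstacle is the reverse inclusion $T(\wt M)\subseteq \mc U(\G)\,\wt v_c$. Here I would use that $\wt M$ is a quotient of $\wt K(\la^\theta)=\mc U(\wt{\mf u}_Y^-)\otimes L(\wt{\mf l}_Y,\la^\theta)$, so $\wt M=\mc U(\wt{\mf u}_Y^-)\cdot L(\wt{\mf l}_Y,\la^\theta)$. By \remref{aux:rem} the odd reflections of \secref{afterkk} use only roots of $\wt{\mf l}_Y$, hence $\wt v_c$ lies inside $L(\wt{\mf l}_Y,\la^\theta)$, and by \lemref{lem:aux1}(i) we have $T(L(\wt{\mf l}_Y,\la^\theta))=L(\mf l_Y,\la)=\mc U(\mf l_Y)\,\wt v_c$. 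It then suffices to prove
\[
T\bigl(\mc U(\wt{\mf u}_Y^-)\cdot L(\wt{\mf l}_Y,\la^\theta)\bigr)
\;=\;\mc U(\mf u_Y^-)\cdot L(\mf l_Y,\la),
\]
which I would establish via a weight–space/PBW analysis using the vector–space decomposition $\wt{\mf u}_Y^-=\mf u_Y^-\oplus \wt{\mf q}_Y^-$, where $\wt{\mf q}_Y^-$ is spanned by the root vectors for roots involving a half–integer index. The key observation is that each basis element of $\wt{\mf q}_Y^-$ strictly alters the half–integer support of the weight, and the supercommutation relations of $\DG$ allow one to push any non–trivial $\wt{\mf q}_Y^-$–factors past $\mf u_Y^-$ and $L(\wt{\mf l}_Y,\la^\theta)$ without cancelling the half–integer contribution; thus no $\wt{\mf q}_Y^-$–weighted contribution lands in $T(\wt M)$, forcing $T(\wt M)=\mc U(\mf u_Y^-)\cdot L(\mf l_Y,\la)=\mc U(\G)\,\wt v_c$.

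The statement for $\ov T(\wt M)$ is proved by the entirely analogous argument, using \corref{aux:cor1}(ii) with $n\ge \ell(\la_+')$ to produce a $\wt{\mf b}^s(n)$–highest weight vector $\wt v_s\in \wt M$ of weight $\la^\natural$; the inclusion $\ov{\mf n}\subseteq\wt{\mf n}^s(n)$ (verified from the Dynkin diagram of $\wt{\Pi}^s(n)$) guarantees that $\wt v_s$ is $\ov{\mf n}$–annihilated, and \lemref{lem:aux1}(ii) together with the parallel weight–space analysis of $\wt{\mf u}_Y^-=\ov{\mf u}_Y^-\oplus \wt{\mf q}_Y^-$ yields $\ov T(\wt M)=\mc U(\SG)\,\wt v_s$. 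The principal difficulty throughout is the bookkeeping of half–integer weights in the PBW step of the second paragraph.
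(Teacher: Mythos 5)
Your argument is sound in substance, but it reaches the conclusion by a genuinely different route than the paper at the decisive ``generation'' step. The paper also starts from the $\wt{\mf{b}}^c(n)$-highest weight vector $v$ of weight $\la$ coming from \corref{aux:cor1}, but then argues weight by weight: for a vector $w\in T(\wt{M})$ of weight $\mu\in\mc{P}_Y$ it chooses $n$ adaptively with $\ell(\la_+)<n$ and $\ell(\mu_+)<n$, writes $w\in\mc{U}(\wt{\mf{n}}^c_-(n))v$, and shows from the support of $\la-\mu$ that no simple root of $\wt{\Pi}^c(n)$ involving a half-integer index can occur, so $w\in\mc{U}(\mf{n}_-)v$; complete $\mf{l}_Y$-reducibility of $T(\wt{M})$ via \lemref{lem:aux1} then finishes. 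You instead exploit the parabolic structure directly: writing $\wt{M}=\mc{U}\big((\wt{\mf{u}}_Y)_-\big)\cdot L'$ with $L'\cong L(\wt{\mf{l}}_Y,\la^\theta)$ and splitting $(\wt{\mf{u}}_Y)_-$ into $(\mf{u}_Y)_-$ plus the odd abelian piece spanned by the $E_{ri}$ with $r\in\hf+\Z_+$, $i\in[-m,-1]$, you rule out, by a PBW weight count, any monomial with a nontrivial odd factor landing in a weight space indexed by $\Gamma$. This works and avoids the adaptive choice of $n$, but two inputs should be made explicit. First, the ``no cancellation'' claim requires that every weight of $L(\wt{\mf{l}}_Y,\la^\theta)$ has non-negative $\epsilon_r$-components for $r\in\hf+\Z_+$ (and non-negative $\epsilon_j$-components, $j\in\N$, for the $\ov{T}$ side); this follows from the hook Schur character formula \eqnref{eqn:hookschur}, and without it the commutator bookkeeping alone does not exclude cancellation against negative half-integer components of weights of $L'$. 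Second, your reduction assumes $\wt{M}$ is a quotient of $\wt{K}(\la^\theta)$, i.e.\ that the $\wt{\mf{l}}_Y$-submodule generated by the $\wt{\mf{b}}$-highest weight vector is irreducible and killed by $\wt{\mf{u}}_Y$; this covers exactly the modules to which the lemma is applied (and the paper's own proof makes a comparable implicit restriction when it invokes $\mf{l}_Y$-complete reducibility), but it is an extra hypothesis relative to the bare statement. Finally, your parenthetical that odd reflections always preserve cyclic highest weight modules is not quite correct for non-irreducible modules when the coroot pairing vanishes (the old highest weight vector need not be annihilated by the new odd positive root vector); here this is harmless because, as you note via \remref{aux:rem}, the entire reflection sequence takes place inside the irreducible $\wt{\mf{l}}_Y$-module $L'$, which is where the needed singularity and generation statements are verified.
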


\begin{proof}
We will only show this for $T(\wt{M})$, as the case of $\ov{T}(\wt{M})$ is analogous. Let
$v$ be a nonzero vector in $\wt{M}$ of weight $\la$ obtained from a non-zero vector of
weight $\la^\theta$ by applying the sequence of odd reflections of \secref{afterkk}. Such
a vector by \corref{aux:cor1} is a $\wt{\mf{b}}^c(n)$-highest weight vector of the
$\DG$-module $\wt{M}$, for $n\gg 0$. Evidently $v\in T(\wt{M})$ and, since
$\mf{b}=\wt{\mf{b}}^c(n)\cap\G$, $v$ is a $\mf{b}$-singular vector. The $\G$-module
$T(\wt{M})$, regarded as an $\mf{l}_Y$-module, is completely reducible by
\lemref{lem:aux1}. Thus to prove the lemma it is enough to show that every vector $w\in
T(\wt{M})$ of weight $\mu\in \mc{P}_Y$ lies in $\mc{U}(\mf{n}_-)v$. To see this, choose
$n$ so that $\ell(\la_+)<n$ and $\ell(\mu_+)<n$. Then with respect to $\wt{\mf{b}}^c(n)$,
$v$ is a highest weight vector of $\wt{M}$ and hence $w\in
\mc{U}(\wt{\mf{n}}^c_{-}(n))v$, where $\wt{\mf{n}}^c_{-}(n)$ is the opposite nilradical
of $\wt{\mf{b}}^c(n)$.  Now the conditions $\ell(\la_+)<n$ and $\ell(\mu_+)<n$ imply that
\begin{equation}\label{la-mu}
\la-\mu=\sum_{i=-m}^{-1}a_i\epsilon_i+\sum_{j=1}^{n-1}b_j\epsilon_j,\quad a_i,b_j\in\Z
\end{equation}
But $\la-\mu$ is also a finite $\Z_+$-linear combination of simple roots from
$\wt{\Pi}^c(n)$.  So we can write
\begin{equation*}
\la-\mu=\sum_{\alpha\in\wt{\Pi}^c(n)}a_\alpha\alpha,\quad a_\alpha\in\Z_+.
\end{equation*}
If there were some $\alpha\in\{\epsilon_{n}-\epsilon_{1/2};
\beta_{1/2},\cdots,
\beta_{n-1/2};\alpha_{n+1/2},\alpha_{n+1},\cdots\}$ with
$a_\alpha\not=0$, then it is easy to see that
$\langle\la-\mu,E_{rr}\rangle\not=0$, for
$r\not\in[-m,-1]\cup[1,n-1]$. It contradicts \eqnref{la-mu}.
Therefore $\la-\mu$ is a $\Z_+$-linear combination of
$\{\alpha_{-m},\cdots,\alpha_{-2};\beta_{-1};\beta_1,\cdots,\beta_{n-1}\}$,
and hence $w\in \mc{U}(\mf{n}_-)v$.
\end{proof}

\begin{thm}\label{matching:modules} For $\la\in\mc{P}_Y$, we have
\begin{align*}
&T(\wt{K}(\la^\theta))=K(\la),\quad T(\wt{L}(\la^\theta))=L(\la);\\
&\ov{T}(\wt{K}(\la^\theta))=\ov{K}(\la^\natural),\quad \ov{T}(\wt{L}(\la^\theta))=\ov{L}(\la^\natural).
\end{align*}
\end{thm}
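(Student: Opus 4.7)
The plan is to establish the four equalities in two groups --- first the parabolic Verma statements, then the irreducible statements. I detail the argument for $T$; the proof for $\ov T$ is entirely parallel, with the odd reflection sequence of \secref{afterkk} replaced by the one preceding the definition of $\wt\Pi^s(n)$, and with the specialization $x_i=0$ for $i\in \N$ in place of $x_r=0$ for $r\in \hf+\Z_+$.

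For the Verma case, \lemref{high:to:high} shows that $T(\wt K(\la^\theta))$ is a highest weight $\G$-module of highest weight $\la$, generated by the vector $v$ produced from the odd reflection sequence. By \corref{aux:cor1} and \remref{aux:rem}, $v$ lies inside the $\wt{\mf l}_Y$-submodule $L(\wt{\mf l}_Y,\la^\theta)$; since $v$ has weight $\la\in\Gamma$, \lemref{lem:aux1}(i) places $v$ inside $T(L(\wt{\mf l}_Y,\la^\theta))=L(\mf l_Y,\la)$, and this $\mf l_Y$-submodule is annihilated by $\mf u_Y\subseteq\wt{\mf u}_Y$. The universal property of $K(\la)$ then yields a surjection $K(\la)\twoheadrightarrow T(\wt K(\la^\theta))$. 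To promote this to an isomorphism I match characters. The PBW isomorphism $\wt K(\la^\theta)\cong\mc U(\wt{\mf u}_{Y,-})\otimes L(\wt{\mf l}_Y,\la^\theta)$ gives a character that factors as ${\rm ch}\,L(\wt{\mf l}_Y,\la^\theta)$ times a product of $(1-e^{-\alpha})^{-1}$ over even positive roots of $\wt{\mf u}_Y$ and $(1+e^{-\alpha})$ over odd ones. Specializing $x_r=0$ for $r\in\hf+\Z_+$, the Levi factor becomes ${\rm ch}\,L(\mf l_Y,\la)$ as in the proof of \lemref{lem:aux1}; every odd root of $\wt{\mf u}_Y$ has the form $\epsilon_i-\epsilon_r$ with $i\in[-m,-1]$ and $r\in\hf+\Z_+$, so its factor collapses to $1$; and the remaining even-root factors are exactly those of $\mf u_Y$. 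The resulting character equals ${\rm ch}\,K(\la)$, so the surjection is an isomorphism.

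For the irreducible case, \lemref{high:to:high} gives that $T(\wt L(\la^\theta))$ is a highest weight $\G$-module of highest weight $\la$, so it is a quotient of $K(\la)$ with unique irreducible top $L(\la)$; it remains to prove irreducibility. The irreducible $\DG$-module $\wt L(\la^\theta)$ carries a non-degenerate super-contragredient bilinear form $B$ associated with the Chevalley anti-involution $\omega$ of $\DG$, which fixes $\wt{\h}$ and interchanges opposite root spaces. The form $B$ annihilates weight spaces of distinct weights and is non-degenerate on each weight space; restricting $B$ to $T(\wt L(\la^\theta))=\bigoplus_{\mu\in\Gamma}\wt L(\la^\theta)_\mu$ therefore remains non-degenerate. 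Since $\omega$ restricts to the Chevalley anti-involution of $\G$, the restricted form is $\G$-contragredient. The standard Shapovalov principle --- that the radical of any contragredient form on a highest weight module equals its maximal proper submodule --- then forces $T(\wt L(\la^\theta))$ to be irreducible, whence $T(\wt L(\la^\theta))=L(\la)$. The main subtlety lies in the precise root-parity bookkeeping for the character calculation, together with the correct handling of super signs in the contragredient form in the $\ov T$ argument; both are routine once set up carefully.
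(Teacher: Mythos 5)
Your treatment of the two parabolic Verma identities is essentially the paper's own argument: the same character specialization (kill the variables $x_r$, $r\in\hf+\Z_+$, respectively $x_i$, $i\in\N$) combined with \lemref{high:to:high}, made slightly more explicit by exhibiting the surjection $K(\la)\twoheadrightarrow T(\wt{K}(\la^\theta))$ via the universal property; your root-parity bookkeeping for $\wt{\mf{u}}_Y$ is accurate. The irreducibility half, however, follows a genuinely different route. The paper argues by contradiction: a hypothetical non-highest $\mf{b}$-singular vector $w\in T(\wt{L}(\la^\theta))$ of weight $\mu$ lies in $\mc{U}(\mf{a})v_\la$ for a finitely generated subalgebra $\mf{a}\subseteq\mf{n}_-$, and for $q\gg 0$ the simple raising operators attached to $\wt{\Pi}^c(q)$ either annihilate $w$ because it is $\mf{b}$-singular or commute with $\mf{a}$ and kill $v_\la$, so $w$ would be a $\wt{\mf{b}}^c(q)$-singular vector inside the irreducible $\wt{L}(\la^\theta)$ --- a contradiction. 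You instead restrict the non-degenerate contravariant (Shapovalov-type) form of $\wt{L}(\la^\theta)$ to the $\Gamma$-weight subspace $T(\wt{L}(\la^\theta))$: orthogonality of distinct weight spaces keeps the restriction non-degenerate, it is contravariant for the transpose anti-involution of $\G$ (the super signs disappear on even elements), and since $T(\wt{L}(\la^\theta))$ is a highest weight $\G$-module by \lemref{high:to:high}, the radical-equals-maximal-submodule principle gives irreducibility. This is correct, and it trades the paper's odd-reflection/commutation bookkeeping for an imported standard fact not developed in the paper, namely the existence of the non-degenerate contravariant form on irreducible highest weight $\DG$-modules; one should take the anti-involution to be the supertranspose (the naive map $E_{rs}\mapsto E_{sr}$ is not an anti-automorphism of $\DG$), which is harmless here since you only use contravariance for $\wt{\h}$ and for elements of $\G$. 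The paper's argument stays entirely within the machinery it has already built, whereas yours is shorter and insensitive to the precise shape of $\wt{\Pi}^c(n)$ beyond what \lemref{high:to:high} records.
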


\begin{proof} We will show this for $T$. The argument for $\ov{T}$ is analogous.  Computing the character of $\wt{K}(\la^\theta)$ we have (see~\eqnref{eqn:hookschur})
\begin{equation*}
{\rm ch}{\wt{K}}(\la^\theta)=
\prod_{i<0,j\in\N,r\in\hf+\Z_+}\frac{(1+x_{i}^{-1}x_r)}{(1-x_{i}^{-1}x_j)}
{\rm ch}\big{(}{\rm
Ind}_{\wt{\mf{p}}_Y^{<0}}^{\DG^{<0}}{L(\wt{\mf{l}}^{<0}_Y,\la^{<0}})\big{)}HS_{\la_+'}(x_{1\over
2},x_1,x_{3 \over 2},x_2,\cdots).
\end{equation*}
Application of $T$ amounts to setting the variables $x_r=0$, $r\in\hf+\N$.  Thus
\begin{equation*}
{\rm
ch}T(\wt{K}(\la^\theta))=\prod_{i<0,j\in\N}\frac{1}{(1-x_{i}^{-1}x_j)}{\rm
ch}\big{(}{\rm
Ind}_{\wt{\mf{p}}_Y^{<0}}^{\DG^{<0}}{L(\wt{\mf{l}}^{<0}_Y,\la^{<0}})\big{)}s_{\la_+}(x_1,x_2,\cdots),
\end{equation*}
which equals ${\rm ch}K(\la)$.  Since $T(\wt{K}(\la^\theta))$ is
a highest weight module by \lemref{high:to:high}, we see that
$T(\wt{K}(\la^\theta))=K(\la)$.

Let $\wt{M}:=\wt{L}(\la^\theta)$ with $\la\in \mc{P}_Y$. Suppose
that $M:=T(\wt{M})$ is not irreducible. Since by
\lemref{high:to:high} the $\G$-module $M$ is a highest weight
module, it must have a $\mf{b}$-singular vector inside $M$ that is
not a highest weight vector.  Suppose that $w$ is such a $\mf{b}$-singular
vector of weight $\mu\in \mc{P}_Y$.  We can choose $n\gg 0$ such
that $\la$ is the highest weight of $\wt{M}$ with respect to
$\wt{\mf{b}}^c(n)$, and $\ell(\la_+)<n$ and $\ell(\mu_+)<n$. By
\corref{aux:cor1} there exists a $\wt{\mf{b}}^c(n)$-highest
weight vector $v_\la$ of the $\DG$-module $\wt{M}$ of weight $\la$. It is
clear that $v_\la$ is a $\mf{b}$-highest weight vector of $M$ and
hence $w\in \mc{U}(\mf{a})v_\la$, where $\mf{a}$ is the subalgebra of
$\mf{n}_-$ generated by root vectors in $\mf{n}_-$ corresponding
to the roots $-\alpha_{-m},\cdots,-\alpha_{-2},-\beta_{-1}$ and
$-\beta_j$, $1\le j\le k$, for some $k$.

Choose $q\in \N$ such that $q\ge n$ and $q> k+1$. Note that
$v_\la$ is also a $\wt{\mf{b}}^c(q)$-highest weight vector of the
$\DG$-module $\wt{M}$ of weight $\la$. Since $w$ is
$\mf{b}$-singular it is annihilated by the root vectors
corresponding to the root
$\alpha_{-m},\cdots,\alpha_{-2},\beta_{-1}$ and $\beta_j$, for all
$j\in\N$. Also $w$ is annihilated by the root vectors
corresponding to the root in
$\wt{\Pi}^c(q)\backslash\{\alpha_{-m},\cdots,\alpha_{-2},\beta_{-1},\beta_1,\beta_2,\cdots,\beta_
{q-1}\}$ since $w\in \mc{U}(\mf{a})v_\la$ and these root vectors
commute with $\mf{a}$. It follows that $w$, regarded as in $\wt{M}$, is
then a $\wt{\mf{b}}^c(q)$-singular vector, contradicting the
irreducibility of $\wt{M}$.
\end{proof}

\begin{prop}\label{functor} $T$ and $\ov{T}$ define exact functors from $\wt{\mc{O}}_Y$ to $\mc{O}_Y$ and from $\wt{\mc{O}}_Y$ to $\ov{\mc{O}}_Y$, respectively.  Furthermore, $T$ and $\ov{T}$ send $\wt{\mc{O}}_Y^f$ to $\mc{O}_Y^f$ and $\wt{\mc{O}}_Y^f$  to $\ov{\mc{O}}_Y^f$, respectively.
\end{prop}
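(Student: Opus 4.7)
The plan is to verify, in order: exactness of $T$ and $\ov T$; that $T$ sends objects of $\wt{\mc O}_Y$ into $\mc O_Y$ (and similarly $\ov T$ into $\ov{\mc O}_Y$); that the functoriality extends to morphisms; and that the finite composition series condition is preserved. I focus on $T$ throughout, as the argument for $\ov T$ is identical mutatis mutandis.

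For exactness, the key observation is that $T(\wt M)$ is nothing more than the direct sum of the $\wt\h$-weight subspaces of $\wt M$ whose weights lie in $\Gamma$. Any $\wt\G$-homomorphism is automatically $\wt\h$-linear and hence preserves each individual weight space. Thus, given a short exact sequence $0 \to \wt A \to \wt B \to \wt C \to 0$ in $\wt{\mc O}_Y$, passing to the $\Gamma$-weight components yields $0 \to T(\wt A) \to T(\wt B) \to T(\wt C) \to 0$. Injectivity on the left and exactness in the middle are immediate; surjectivity on the right uses that any $c \in T(\wt C)$ has some preimage in $\wt B$, and the $\Gamma$-weight part of this preimage (extracted from the $\wt\h$-semisimple decomposition of $\wt B$) lies in $T(\wt B)$ and still maps to $c$.

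To see $T(\wt M) \in \mc O_Y$, note that the $\h$-semisimplicity and finite-dimensionality of weight spaces pass directly from $\wt M$. For condition (i) in the definition of $\mc O_Y$, apply $T$ termwise to the $\wt{\mf l}_Y$-decomposition $\wt M = \bigoplus_\alpha L(\wt{\mf l}_Y,\mu_\alpha^\theta)$; by \lemref{lem:aux1}(i) each summand becomes $L(\mf l_Y,\mu_\alpha)$, giving the required decomposition over $\mf l_Y$. For condition (ii), apply $T$ to the filtration $\wt M = \wt M_0 \supseteq \wt M_1 \supseteq \cdots$ and invoke exactness plus \thmref{matching:modules} to identify each subquotient
\[
T(\wt M_i)/T(\wt M_{i+1}) \;\cong\; T(\wt M_i/\wt M_{i+1}) \;\cong\; T(\wt L(\nu_i^\theta)) \;=\; L(\nu_i).
\]
The commutative diagrams in \eqnref{T} ensure that $T[\wt f]$ is a $\G$-homomorphism whenever $\wt f$ is a $\wt\G$-homomorphism, establishing functoriality on morphisms.

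For the finite composition series claim: if $\wt M \in \wt{\mc O}_Y^f$, take a finite filtration with subquotients $\wt L(\nu_i^\theta)$, apply $T$, and use exactness together with $T(\wt L(\nu^\theta)) = L(\nu)$ from \thmref{matching:modules} to obtain a finite filtration of $T(\wt M)$ by irreducibles $L(\nu_i)$; hence $T(\wt M) \in \mc O_Y^f$. The only mild obstacle is the exactness verification (specifically the surjectivity after restriction to $\Gamma$-weight spaces); everything else amounts to bookkeeping transfer of structure through the decompositions set up in \secref{section2} and the identifications provided by \lemref{lem:aux1} and \thmref{matching:modules}.
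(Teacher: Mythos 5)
Your proposal is correct and follows essentially the same route as the paper's own proof: exactness read off directly from the weight-space definition of $T$, condition (i) via \lemref{lem:aux1}, and condition (ii) together with the preservation of $\wt{\mc{O}}_Y^f$ via exactness and $T(\wt{L}(\la^\theta))=L(\la)$ from \thmref{matching:modules}. Your only addition is spelling out the surjectivity step in the exactness check, which the paper dismisses as clear from the definitions.
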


\begin{proof} Exactness is clear from the definitions.

It remains to prove that if $\wt{M}\in\wt{\mc{O}}_Y$, then $T(\wt{M})\in\mc{O}_Y$ and $\ov{T}(\wt{M})\in\ov{\mc{O}}_Y$. We will only prove $T(\wt{M})\in\mc{O}_Y$, as the proof of $\ov{T}(\wt{M})\in\ov{\mc{O}}_Y$ is analogous.

Clearly ${\rm dim}T(\wt{M})_\gamma<\infty$, for all $\gamma\in\h^*$. Now if
$\wt{M}\cong\bigoplus_{\mu\in\mc{P}_Y}L(\wt{\mf{l}}_Y,\mu^\theta)^{m(\mu)}$, then by
\lemref{lem:aux1} $T(\wt{M})\cong\bigoplus_{\mu\in\mc{P}_Y}L({\mf{l}}_Y,\mu)^{m(\mu)}$.
(Here and below $m(\mu)$ stands for the multiplicity of $L(\wt{\mf{l}}_Y,\mu^\theta)$ in
$\wt{M}$.)

Finally by \thmref{matching:modules}
$T(\wt{L}(\la^\theta))=L(\la)$.  By exactness of $T$ it follows
that a downward filtration for $\wt{M}$ gives rise to a
corresponding downward filtration of $T(\wt{M})$ with a one-to-one
correspondence between the composition factors. Hence
$T(\wt{M})\in\mc{O}_Y$.

The second part of the proposition is clear.
\end{proof}

\subsection{Some consequences}

\iffalse
\begin{rem}
Let $\La(y_1,y_2,\cdots)$ denote the ring of symmetric functions in the variables
$y_1,y_2,\cdots$. Denote by $\omega$ the involution of $\La(y_1,y_2,\cdots)$ that
interchanges complete symmetric functions with the elementary symmetric functions.  Then
we have for $\eta\in\mc{P}$ $\omega(s_\eta(y_1,y_2,\cdots))=s_{\eta'}(y_1,y_2,\cdots)$.
Denoting $x_i$ and $x_{i-1/2}$ both by $y_i$, $i\in\N$ we may regard
$\text{ch}{L}(\mf{l}_Y,\la)$ and $\text{ch}{L}(\ov{\mf{l}}_Y,\la^\natural)$ as elements
in $\Z[[x^{\pm 1}_{-m},\cdots,x_{-1}^{\pm1}]]\otimes \La(y_1,y_2,\cdots)$. It follows
that
$\text{ch}{L}(\ov{\mf{l}}_Y,\la^\natural)=\omega\big{(}\text{ch}{L}(\mf{l}_Y,\la)\big{)}$,
for all $\la\in\mc{P}_Y$.  This, together with \lemref{lem:aux1} and
\thmref{matching:modules}, implies
\begin{equation}\label{comb:char}
\text{ch}\ov{L}(\la^\natural)=\omega\big{(}\text{ch}{L}(\la)\big{)}, \quad  \la\in\mc{P}_Y.
\end{equation}
Identity \eqnref{comb:char} in the case of irreducible polynomial representations (i.e.~
$\la\in\mc{P}_{[-m,-2]}$ with $\la_{-1}\ge \la_1$) gives \cite[Theorem 6.10]{BR}, while
in the case of $Y=[-m,-2]$ it gives \cite[Corollary 6.15]{CWZ}.
\end{rem}
\fi

Let $M\in\mc{O}_Y$. We may regard $\text{ch}M$ as an element in $\Z[[x^{\pm
1}_{-m},\cdots,x^{\pm 1}_{-1}]]\otimes\La_\Z(x_1,x_2,\cdots)$, where
$\La_\Z(x_1,x_2,\cdots)$ denotes the space of (completed) symmetric functions in the
variables $x_1,x_2,\cdots$. Similarly, for $\ov{M}\in\ov{\mc{O}}_Y$ and
$\wt{M}\in\wt{\mc{O}}_Y$, $\text{ch}\ov{M}$  and $\text{ch}\wt{M}$ may be viewed as
elements in $\Z[[x^{\pm 1}_{-m},\cdots,x^{\pm
1}_{-1}]]\otimes\La_\Z(x_{1/2},x_{3/2},\cdots)$ and $\Z[[x^{\pm 1}_{-m},\cdots,x^{\pm
1}_{-1}]]\otimes\La_\Z(x_1,x_2,\cdots)\otimes\La_\Z(x_{1/2},x_{3/2},\cdots)$,
respectively.

Let $\ov{\omega}:\La_\Z(x_1,x_2,\cdots)\rightarrow\La_\Z(x_{1/2},x_{3/2},\cdots)$ be the ring homomorphism that sends the $n$th complete symmetric function in $x_1,x_2,\cdots$ to the $n$th elementary symmetric function in $x_{1/2},x_{3/2},\cdots$.  Let $\wt{\omega}:\La_\Z(x_1,x_2,\cdots)\rightarrow \La_\Z(x_1,x_2,\cdots)\otimes\La_\Z(x_{1/2},x_{3/2},\cdots)$ be the ring homomorphism defined by sending the $n$th complete symmetric function in $x_2,x_4,\cdots$ (respectively in $x_1,x_3,\cdots$) to the $n$th complete symmetric function in $x_1,x_2,\cdots$ (respectively to the $n$th elementary symmetric function in $x_{1/2},x_{3/2},\cdots$).

\begin{cor}\label{comb:char1} Let $\la\in\mc{P}_Y$.  We have
\begin{itemize}
\item[(i)] $\wt{\omega}({\rm ch}L(\la))={\rm ch}\wt{L}(\la^\theta)$.
\item[(ii)] $\ov{\omega}({\rm ch}L(\la))={\rm ch}\ov{L}(\la^\natural)$.
\end{itemize}
\end{cor}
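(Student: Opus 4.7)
The plan is to reduce the claim to its analog at the level of parabolic Verma modules and then verify that Verma version by direct symmetric-function computation.

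First I would establish the Verma-level identities
\begin{align*}
\wt{\omega}({\rm ch}K(\la))={\rm ch}\wt{K}(\la^\theta),\qquad \ov{\omega}({\rm ch}K(\la))={\rm ch}\ov{K}(\la^\natural),\qquad \la\in\mc{P}_Y.
\end{align*}
Using the PBW character formula for $\wt{K}(\la^\theta)$ from the proof of \thmref{matching:modules} and the parallel formulas for ${\rm ch}K(\la)$ and ${\rm ch}\ov{K}(\la^\natural)$, each side factors as a $\DG^{<0}$-piece (which depends only on the $x_{-m},\dots,x_{-1}$ variables and is therefore fixed by $\wt{\omega}$ and $\ov{\omega}$), times a ``mixed'' product on the cross-terms $x_i^{-1}x_r$, times a Schur-type factor in the positive variables. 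The Schur factor transforms correctly from the base cases $\wt{\omega}(h_n)=HS_{(1^n)}(x_{1/2},x_1,x_{3/2},\dots)$ and $\ov{\omega}(h_n)=e_n(x_{1/2},x_{3/2},\dots)$ via Jacobi--Trudi for ordinary and hook Schur functions, giving $\wt{\omega}(s_{\la_+})=HS_{(\la_+)'}$ and $\ov{\omega}(s_{\la_+})=s_{(\la_+)'}$. The mixed product transforms correctly via the expansion $\prod_{j\in\N}(1-x_i^{-1}x_j)^{-1}=\sum_n h_n(x_1,x_2,\dots)\,x_i^{-n}$ together with the generating identities
\[
\sum_{n\ge 0}t^n HS_{(1^n)}(y;z)=\frac{\prod_r(1+ty_r)}{\prod_j(1-tz_j)},\qquad \sum_{n\ge 0}t^n e_n(y)=\prod_r(1+ty_r).
\]

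To bootstrap from Vermas to irreducibles, I would express the character of $\wt{L}(\la^\theta)$ formally as a $\Z$-linear combination of Verma characters,
\[
{\rm ch}\wt{L}(\la^\theta)=\sum_\mu a_{\la\mu}\,{\rm ch}\wt{K}(\mu^\theta),\qquad a_{\la\mu}\in\Z,
\]
by inverting the lower-unitriangular expansion of each $\wt{K}(\mu^\theta)$ in its composition factors. Exactness of $T$ (\propref{functor}) together with $T(\wt{K}(\mu^\theta))=K(\mu)$ and $T(\wt{L}(\la^\theta))=L(\la)$ (\thmref{matching:modules}) translates the above into ${\rm ch}L(\la)=\sum_\mu a_{\la\mu}\,{\rm ch}K(\mu)$; since parabolic Verma characters are linearly independent formal series, the coefficients $a_{\la\mu}$ are uniquely determined. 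Applying the ring homomorphism $\wt{\omega}$ termwise and using the Verma-level identity established above yields
\[
\wt{\omega}({\rm ch}L(\la))=\sum_\mu a_{\la\mu}\,{\rm ch}\wt{K}(\mu^\theta)={\rm ch}\wt{L}(\la^\theta),
\]
which is (i). Part (ii) is proved identically with $\ov{T}$ and $\ov{\omega}$ in place of $T$ and $\wt{\omega}$.

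The main technical subtlety is that the Verma expansion of an irreducible character can be an infinite sum. I would handle this by working throughout in the completed formal-character ring $\Z[[x_{-m}^{\pm1},\dots,x_{-1}^{\pm1}]]\otimes\La_\Z(x_1,x_2,\dots)$ (and its $\wt{\omega}$- and $\ov{\omega}$-targets), extending $\wt{\omega}$ and $\ov{\omega}$ continuously, and invoking the fact that the Verma characters form a topologically free family. Alternatively, one can sidestep infinite sums entirely by exploiting the exactness of $T$ and $\ov{T}$ to set up a composition-factor bijection between filtrations, and then argue by an inductive matching in the dominance order on $\mc{P}_Y$.
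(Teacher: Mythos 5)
Your argument is correct, but it takes a genuinely different route from the paper's. The paper proves this corollary in one line at the level of the Levi subalgebra: every object of $\wt{\mc{O}}_Y$ is by definition completely reducible over $\wt{\mf{l}}_Y$, so ${\rm ch}\wt{L}(\la^\theta)=\sum_\mu m(\mu)\,{\rm ch}L(\wt{\mf{l}}_Y,\mu^\theta)$, and \lemref{lem:aux1} together with $T(\wt{L}(\la^\theta))=L(\la)$ from \thmref{matching:modules} shows that ${\rm ch}L(\la)=\sum_\mu m(\mu)\,{\rm ch}L(\mf{l}_Y,\mu)$ with the same multiplicities; the identity then follows from $\wt{\omega}\big({\rm ch}L(\mf{l}_Y,\mu)\big)={\rm ch}L(\wt{\mf{l}}_Y,\mu^\theta)$ and $\ov{\omega}\big({\rm ch}L(\mf{l}_Y,\mu)\big)={\rm ch}L(\ov{\mf{l}}_Y,\mu^\natural)$, which are exactly the symmetric-function facts $\wt{\omega}(s_{\mu_+})=HS_{\mu_+'}$ and $\ov{\omega}(s_{\mu_+})=s_{\mu_+'}$ encoded in \eqnref{eqn:hookschur}; since the Levi decomposition is weight-space-wise finite, no convergence issues arise. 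You instead go through parabolic Verma characters: you verify the Verma-level identities $\wt{\omega}({\rm ch}K(\la))={\rm ch}\wt{K}(\la^\theta)$ and $\ov{\omega}({\rm ch}K(\la))={\rm ch}\ov{K}(\la^\natural)$ by generating functions and Jacobi--Trudi (correct, and these identities are indeed what the paper invokes later in the proof of \thmref{character}), then transport a possibly infinite Verma expansion of ${\rm ch}\wt{L}(\la^\theta)$ down via $T$ (in substance, the multiplicity matching of \corref{Composition-factors}, which rests on \thmref{matching:modules} and \propref{functor}) and apply $\wt{\omega}$, $\ov{\omega}$ termwise. This works, and your handling of the infinite-sum subtlety (local finiteness of the interval of weights contributing to a fixed graded piece, or an induction in the ordering) is the right fix, though your appeal to the Verma characters being a ``topologically free family'' is asserted rather than argued. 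The trade-off: the paper's Levi argument is shorter and avoids all completion bookkeeping, while your route produces the Verma-level $\wt{\omega}$, $\ov{\omega}$ identities and, in effect, the coefficient statement of \thmref{character} as by-products -- you have essentially reversed the paper's logical order, which derives \thmref{character} from this corollary rather than the other way around.
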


\begin{proof}
Since $\wt{\omega}\big{(}\text{ch}L({\mf{l}}_Y,\la)\big{)}=
\text{ch}L(\wt{\mf{l}}_Y,\la^\theta)$ and
$\ov{\omega}\big{(}\text{ch}L({\mf{l}}_Y,\la)\big{)}=\text{ch}L(\ov{\mf{l}}_Y,\la^\natural)$
the corollary follows directly from \lemref{lem:aux1} and \thmref{matching:modules}.
\end{proof}

\begin{rem} \corref{comb:char1} (ii) is consistent with the prediction of the super duality conjecture, and in the case of irreducible polynomial representations (i.e.~ $\la\in\mc{P}_{[-m,-2]}$ with $\la_{-1}\ge \la_1$) gives \cite[Theorem 6.10]{BR}. In the case of $Y=[-m,-2]$ it gives \cite[Corollary 6.15]{CWZ}. For infinite-dimensional unitary modules appearing in certain Howe dualities it also recovers \cite[Theorem 5.3]{CLZ}.
\end{rem}

For $n\in\N$, we recall the truncation functor
$\mf{tr}_{n}:\ov{\mc{O}}_Y\rightarrow(\ov{\mc{O}}_Y)_n$ of \cite[Definition 3.1]{CW2},
where here and further we use a subscript $n$ to indicate a corresponding truncated
category of $\gl(m|n)$-modules. For
$\gamma\in\sum_{i=-m}^{-1}\Z\epsilon_i+\sum_{j=1}^n\Z\epsilon_{j-{1\over 2}}$ let
$\ov{K}_n(\gamma)$ and $\ov{L}_n(\gamma)$ be the parabolic Verma $\gl(m|n)$-module and
irreducible $\gl(m|n)$-module of highest weight $\gamma$ in the category
$(\ov{\mc{O}}_Y)_n$, respectively. We recall the following.

\begin{lem}\label{cor:cw33} \cite[Corollary
3.3]{CW2} Let $\la\in{\ov{\mc{P}}_Y}$. The truncation functor $\mf{tr}_{n}$, for every
$n\in \N$, is exact and it sends $\ov{K}(\la)$ and $\ov{L}(\la)$ to $\ov{K}_n(\la)$ and
$\ov{L}_n(\la)$, respectively, if $\langle\la,E_{n+1/2,n+1/2}\rangle=0$, and to zero
otherwise.
\end{lem}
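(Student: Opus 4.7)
The plan is to establish the three assertions of the lemma in order: exactness, the formula on parabolic Verma modules, and the formula on irreducibles.

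For exactness, note that $\mf{tr}_n(\ov M)$ is by definition the direct sum of weight spaces $\ov M_\gamma$ with $\gamma$ supported on $[-m,-1]\cup\{1/2,3/2,\ldots,n-1/2\}$, and since every root of $\gl(m|n)\subseteq\SG$ is supported on these indices, $\mf{tr}_n(\ov M)$ is automatically $\gl(m|n)$-stable. Morphisms in $\ov{\mc O}_Y$ preserve $\ov{\h}$-weight spaces, so they restrict to $\mf{tr}_n$, and exactness follows because kernels, images, and cokernels are computed weight-space-by-weight-space.

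I would handle the vanishing case uniformly for both $\ov K(\la)$ and $\ov L(\la)$. If $\langle\la,E_{n+1/2,n+1/2}\rangle>0$, the partition $(\la_+)'$ has length greater than $n$, and no weight of $L(\ov{\mf l}_Y,\la)$ is supported on the first $n$ odd indices: as a polynomial irreducible of the $\gl$-Levi on $\hf+\Z_+$ with highest weight $(\la_+)'$, its weights $\beta$ are non-negative compositions satisfying $\beta^+\le(\la_+)'$ in dominance; were $\beta$ supported on the first $n$ odd indices, then $\sum_{i\le n}\beta_i^+=|(\la_+)'|>\sum_{i\le n}(\la_+)'_i$, a contradiction. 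Using the PBW isomorphism
\begin{equation*}
\ov K(\la)\cong\mc U((\ov{\mf u}_Y)_-)\otimes L(\ov{\mf l}_Y,\la)
\end{equation*}
as $\ov{\h}$-modules, every weight of $\ov K(\la)$ comes from a Levi weight $\mu_L$ by adding roots $\epsilon_r-\epsilon_{-i}$ (which only increase odd-index coefficients), so a weight of $\ov K(\la)$---or of its quotient $\ov L(\la)$---supported on truncated indices would force $\mu_L$ to be so as well. Thus $\mf{tr}_n(\ov K(\la))=\mf{tr}_n(\ov L(\la))=0$.

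When $\langle\la,E_{n+1/2,n+1/2}\rangle=0$, the weights of $L(\ov{\mf l}_Y,\la)$ are already supported on truncated indices, and the same PBW decomposition yields $\mf{tr}_n(\ov K(\la))\cong\ov K_n(\la)$. For the irreducible, exactness then produces a surjection $\ov K_n(\la)\twoheadrightarrow\mf{tr}_n(\ov L(\la))$, so the truncation is a highest weight $\gl(m|n)$-module of highest weight $\la$. I expect the central obstacle to be promoting this to equality with $\ov L_n(\la)$, and my plan is to show $\mf{tr}_n(\ov L(\la))$ is irreducible by verifying that any $\gl(m|n)\cap\ov{\mf b}$-singular vector $w'\in\mf{tr}_n(\ov L(\la))\subseteq\ov L(\la)$ of weight $\mu'$ is actually $\ov{\mf b}$-singular in $\ov L(\la)$. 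Concretely: the simple root expansion $\la-\mu'=\sum c_\alpha\alpha$ with $c_\alpha\in\Z_+$ has $c_{\beta_r}=0$ for all $r\ge n-1/2$---one checks this inductively by extracting the coefficient of $\epsilon_{r+1}$ from both sides, using that $\la$ and $\mu'$ are supported on truncated indices. On the other hand, the simple root expansion of any positive $\SG$-root $\epsilon_a-\epsilon_b$ with $b\ge n+1/2$ contains $\beta_{b-1}$ with $b-1\ge n-1/2$. Thus $\la-(\mu'+\epsilon_a-\epsilon_b)$ fails to be a non-negative combination of simple roots, so the weight $\mu'+\epsilon_a-\epsilon_b$ does not appear in $\ov L(\la)$ and $E_{ab}w'=0$. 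Irreducibility of $\ov L(\la)$ then forces $w'\in\C v_\la$, whence $\mf{tr}_n(\ov L(\la))=\ov L_n(\la)$.
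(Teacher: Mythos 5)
Your exactness argument, your treatment of the vanishing case, and your singular-vector argument in the last step are all sound (note that the paper itself does not prove this lemma but imports it from \cite[Corollary 3.3]{CW2}). However, there is a genuine error in the step $\mf{tr}_n(\ov{K}(\la))\cong\ov{K}_n(\la)$: the claim that when $\langle\la,E_{n+1/2,n+1/2}\rangle=0$ ``the weights of $L(\ov{\mf l}_Y,\la)$ are already supported on truncated indices'' is false whenever $\la_+\neq 0$. The Levi $\ov{\mf l}_Y$ always contains the full infinite general linear algebra on the indices $\hf+\Z_+$, so $L(\ov{\mf l}_Y,\la)$ is the (infinite-dimensional) polynomial $\gl_\infty$-irreducible attached to the partition of positive coefficients of $\la$; already for the natural module it has weights $\epsilon_{r}$ for arbitrarily large odd $r$. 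Taken literally, your claim would also identify $L(\ov{\mf l}_Y,\la)$ with the finite-dimensional Levi module $L(\ov{\mf l}_Y\cap\gl(m|n),\la)$, which is impossible. This is not a cosmetic slip: your identification of $\mf{tr}_n(\ov{K}(\la))$ with $\ov{K}_n(\la)$ is exactly what produces the surjection $\ov{K}_n(\la)\twoheadrightarrow\mf{tr}_n(\ov{L}(\la))$, and without that surjection the absence of non-trivial singular vectors alone does not yield irreducibility of $\mf{tr}_n(\ov{L}(\la))$ (a submodule-free-of-singular-vectors argument needs the module to be generated by $v_\la$).

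The correct route through this step is close to what you wrote but must truncate both PBW factors. Since every weight of $\mc{U}((\ov{\mf u}_Y)_-)$ and every weight of $L(\ov{\mf l}_Y,\la)$ has non-negative coefficients at the odd indices $r\ge n+1/2$ (the roots of $(\ov{\mf u}_Y)_-$ are of the form $\epsilon_r-\epsilon_{-i}$ or $\epsilon_{-j}-\epsilon_{-i}$, and the Levi module is polynomial in the odd directions), truncation distributes over the tensor product: $\mf{tr}_n\big{(}\mc{U}((\ov{\mf u}_Y)_-)\otimes L(\ov{\mf l}_Y,\la)\big{)}=\mf{tr}_n\big{(}\mc{U}((\ov{\mf u}_Y)_-)\big{)}\otimes\mf{tr}_n\big{(}L(\ov{\mf l}_Y,\la)\big{)}$. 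One then checks that $\mf{tr}_n\big{(}\mc{U}((\ov{\mf u}_Y)_-)\big{)}$ is the enveloping algebra of the opposite nilradical of the corresponding parabolic of $\gl(m|n)$, and, as a separate (easy but necessary) fact, that $\mf{tr}_n\big{(}L(\ov{\mf l}_Y,\la)\big{)}\cong L(\ov{\mf l}_Y\cap\gl(m|n),\la)$ when the partition of positive coefficients of $\la$ has length at most $n$ (for instance via semistandard tableaux: a tableau contributes a truncated weight precisely when all its entries lie among the first $n$ odd indices). Comparing characters and using the natural surjection $\ov{K}_n(\la)\rightarrow\mc{U}(\gl(m|n))v_\la=\mf{tr}_n(\ov{K}(\la))$ coming from Frobenius reciprocity then gives $\mf{tr}_n(\ov{K}(\la))\cong\ov{K}_n(\la)$ as $\gl(m|n)$-modules, after which your argument for $\ov{L}(\la)$ goes through as written.
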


\begin{prop}\label{finite:filt}
The module $\ov{K}(\la)$ lies in
$\ov{\mc{O}}_Y^f$, for all $\la\in{\ov{\mc{P}}_Y}$.
Thus category $\ov{\mc{O}}_Y^f$ is the category of finitely
generated $\SG$-modules that as $\ov{\mf{l}}_Y$-modules are direct
sums of $L(\ov{\mf{l}}_Y,\mu)$, $\mu\in{\ov{\mc{P}}_Y}$, with a locally
nilpotent $\ov{\mf{u}}_Y$-action.
\end{prop}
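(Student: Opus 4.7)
The plan is to first establish $\ov{K}(\la)\in\ov{\mc{O}}^f_Y$ by transferring finite composition length from the classical parabolic Verma module $K(\mu)$ through the exact functors $T$ and $\ov{T}$, and then to deduce the intrinsic characterization of $\ov{\mc{O}}^f_Y$.

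\emph{Reduction via $T$ and $\ov{T}$.} Write $\la=\mu^\natural$ for some $\mu\in\mc{P}_Y$. By \thmref{matching:modules} we have $\ov{K}(\la)=\ov{T}(\wt{K}(\mu^\theta))$ and $K(\mu)=T(\wt{K}(\mu^\theta))$, with $T(\wt{L}(\nu^\theta))=L(\nu)$ and $\ov{T}(\wt{L}(\nu^\theta))=\ov{L}(\nu^\natural)$ for every $\nu\in\mc{P}_Y$. Since \propref{functor} carries $\wt{\mc{O}}^f_Y$ into $\ov{\mc{O}}^f_Y$ via $\ov{T}$, it suffices to prove $\wt{K}(\mu^\theta)\in\wt{\mc{O}}^f_Y$. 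For this, I would compare with the $T$-image: any strictly descending chain $\wt{K}(\mu^\theta)=N_0\supsetneq N_1\supsetneq\cdots\supsetneq N_k$ with irreducible subquotients $N_i/N_{i+1}\cong\wt{L}(\nu_i^\theta)$ is sent by the exact functor $T$ to a chain $K(\mu)=T(N_0)\supsetneq T(N_1)\supsetneq\cdots\supsetneq T(N_k)$ whose successive quotients $L(\nu_i)=T(\wt{L}(\nu_i^\theta))$ are again nonzero and irreducible. Hence any such $k$ is bounded by the composition length of $K(\mu)$.

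\emph{Finite length of $K(\mu)$.} The remaining input is the classical fact $K(\mu)\in\mc{O}^f_Y$. I would deduce it via the truncation functor $\mc{O}_Y\rightarrow(\mc{O}_Y)_n$: for $n\ge\ell(\mu_+)$, $K(\mu)$ corresponds to the parabolic Verma module $K_n(\mu)$ over the finite-dimensional algebra $\gl(m+n)$, which has finite composition length by the classical parabolic BGG theorem. The linkage principle for $\gl(m+n)$ (Weyl-group orbits of $\mu+\rho$) confines the highest weights of composition factors to a set that stabilizes for $n$ sufficiently large, so the composition factors of $K(\mu)$ itself are finite in number. Combined with the previous paragraph this yields $\wt{K}(\mu^\theta)\in\wt{\mc{O}}^f_Y$, and applying $\ov{T}$ gives $\ov{K}(\la)\in\ov{\mc{O}}^f_Y$.

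\emph{Characterization of $\ov{\mc{O}}^f_Y$.} The inclusion of $\ov{\mc{O}}^f_Y$ into the category described is essentially built in: any object has finite composition length and is hence finitely generated, the $\ov{\mf{l}}_Y$-decomposition is part of the definition of $\ov{\mc{O}}_Y$, and local nilpotence of $\ov{\mf{u}}_Y$ follows because the weights are bounded above by the finitely many highest weights of the composition factors, so any element is annihilated by a sufficiently large power of $\ov{\mf{u}}_Y$. For the converse, given a finitely generated $\SG$-module $M$ satisfying the stated properties, the local nilpotence of $\ov{\mf{u}}_Y$ combined with $\ov{\mf{l}}_Y$-semisimplicity allows one, by a standard argument, to extract finitely many $\ov{\mf{p}}_Y$-highest weight generators $w_1,\ldots,w_s$ of weights $\mu_1,\ldots,\mu_s\in\ov{\mc{P}}_Y$. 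Each yields a surjection $\ov{K}(\mu_j)\twoheadrightarrow\mc{U}(\SG)\cdot w_j$, hence a surjection $\bigoplus_{j=1}^s\ov{K}(\mu_j)\twoheadrightarrow M$. Since each $\ov{K}(\mu_j)\in\ov{\mc{O}}^f_Y$ by the first part and $\ov{\mc{O}}^f_Y$ is an abelian subcategory closed under quotients, $M\in\ov{\mc{O}}^f_Y$.

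\emph{Main obstacle.} The most substantive input is the finite composition length of $K(\mu)$ in the direct-limit setting; although folklore, it genuinely requires the truncation-plus-linkage step above to rule out new composition factors appearing in the limit $n\to\infty$. Once this is granted, the transfer to $\wt{K}(\mu^\theta)$ via $T$ and to $\ov{K}(\la)$ via $\ov{T}$ is purely formal, relying only on exactness and the matching of irreducible modules in \thmref{matching:modules} and \propref{functor}.
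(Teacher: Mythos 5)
Your route is genuinely different from the paper's and essentially reverses its logical order. The paper proves $\ov{K}(\la)\in\ov{\mc{O}}_Y^f$ directly on the super side: it chooses $n\gg 0$ so that $\langle\la,E_{n+1/2,n+1/2}\rangle=0$ and the degree of atypicality of $\la$ has stabilized, uses the central character in a truncated category $(\ov{\mc{O}}_Y)_k$ to see that every composition factor $\ov{L}(\mu)$ of $\ov{K}(\la)$ satisfies $\langle\mu,E_{n+1/2,n+1/2}\rangle=0$ and hence survives truncation, and then invokes \lemref{cor:cw33} together with the finite length of $\ov{K}_n(\la)$ over $\gl(m|n)$; the classical statement $K(\la)\in\mc{O}_Y^f$ is only deduced afterwards, in \corref{G-finite:filt}, from this proposition and \thmref{matching:modules}. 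You instead take the classical finite length of $K(\mu)$ over the limit algebra $\G$ as the input, transfer it to $\wt{K}(\mu^\theta)$ by bounding chains with irreducible subquotients under the exact functor $T$ (legitimate, since \thmref{matching:modules} and \propref{functor} are independent of this proposition, so there is no circularity), and then apply $\ov{T}$. Two points need to be made explicit for this to be complete: first, to convert the bound on chain lengths into membership in $\wt{\mc{O}}_Y^f$ you should invoke that $\wt{K}(\mu^\theta)$ lies in $\wt{\mc{O}}_Y$ (the citation of \cite{CK} in \secref{sec:O}), so that a filtration with irreducible subquotients exists and your bound forces it to terminate; second, your truncation-plus-linkage sketch for $K(\mu)\in\mc{O}_Y^f$ is folklore but is exactly the crux, and it requires not only that the possible highest weights of composition factors stabilize but that an exact classical truncation functor matches $K(\mu),L(\nu)$ with $K_n(\mu),L_n(\nu)$ (or zero) so that multiplicities stabilize as well --- the classical counterpart of what \lemref{cor:cw33} does in the paper. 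What your approach buys is that no atypicality/central-character argument for $\gl(m|n)$ is needed; what the paper's buys is that finite length for the limit algebra $\G$ is never assumed but obtained as a corollary, which is why the paper calls its proof elementary.

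There is, however, a genuine flaw in your converse step of the characterization. You claim that a finitely generated $\SG$-module $M$ that is $\ov{\mf{l}}_Y$-semisimple with locally nilpotent $\ov{\mf{u}}_Y$-action is generated by finitely many $\ov{\mf{p}}_Y$-highest weight vectors and hence admits a surjection $\bigoplus_j\ov{K}(\mu_j)\twoheadrightarrow M$. This is false in general: any homomorphic image of a parabolic Verma module is generated by singular vectors, whereas already in ordinary (parabolic) category $\mc{O}$ a non-split extension of one highest weight module by another (e.g.\ a projective with a Verma flag) is finitely generated, Levi-semisimple and locally nilpotent for the nilradical, yet the submodule generated by all its singular vectors is proper, so it is not a quotient of any direct sum of (parabolic) Verma modules. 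The correct standard argument --- the one the paper appeals to --- produces instead a finite filtration $0=M_0\subset M_1\subset\cdots\subset M_r=M$ whose subquotients are highest weight modules, i.e.\ quotients of various $\ov{K}(\mu)$, by repeatedly splitting off the submodule generated by a singular vector of maximal weight among the finitely many generating $\ov{\mf{l}}_Y$-isotypic components; combined with the first part of the proposition this gives $M\in\ov{\mc{O}}_Y^f$. Your conclusion is correct, but the covering-by-parabolic-Vermas mechanism must be replaced by this filtration argument.
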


\begin{proof}
Consider a fixed $\la\in{\ov{\mc{P}}_Y}$. Choose $n\gg 0$ so that $\langle\la,E_{n+{1/
2},n+{1/ 2}}\rangle=0$ and the degree of atypicality for $\la$ does not increase anymore
with increasing $n$.  Assume $\ov{L}(\mu)$ is a composition factor in $\ov{K}(\la)$. We
have $\mu\in{\ov{\mc{P}}_Y}$. Choose $k\ge n$ such that $\mf{tr}_{k}(\ov{L}(\mu))\not=0$.
Then $\la$ and $\mu$ share the same central character in $(\ov{\mc{O}}_Y)_k$. Therefore
our choice of $n$ together with $\mu\in{\ov{\mc{P}}_Y}$ implies that
$\langle\mu,E_{n+{1/2},n+{1/2}}\rangle=0$.  Thus by \lemref{cor:cw33} the multiplicity of
each $\ov{L}_n(\mu)$ inside each $\ov{K}_n(\la)$ is the same as that of $\ov{L}(\mu)$ in
$\ov{K}(\la)$. Since the $\gl(m|n)$-module $\ov{K}_n(\la)$ has finite composition series
(because as a $\gl(m|n)_{\bar{0}}$-module it is isomorphic to the tensor product of a
generalized Verma module and a finite-dimensional module), it follows that
$\ov{K}(\la)\in\ov{\mc{O}}_Y^f$.

\iffalse Consider a fixed $\la\in{\ov{\mc{P}}_Y}$. Choose $n\gg 0$ so that
$\langle\la,E_{n+{1/ 2},n+{1/ 2}}\rangle=0$ and the degree of atypicality for $\la$ does
not increase anymore with increasing $n$.  Now if $\ov{L}(\mu)$ is a composition factor
in $\ov{K}(\la)$, then $\la$ and $\mu$ share the same central character and $\mu$ is less
than or equal to $\la$ in the usual partial ordering of weights. Therefore our choice of
$n$ implies that $\mu\in{\ov{\mc{P}}_Y}$ and $\langle\mu,E_{n+{1/2},n+{1/2}}\rangle=0$.
Thus by \lemref{cor:cw33} the multiplicity of each $\ov{L}_n(\mu)$ inside each
$\ov{K}_n(\la)$ is the same as that of $\ov{L}(\mu)$ in $\ov{K}(\la)$. Since the
$\gl(m|n)$-module $\ov{K}_n(\la)$ has finite composition series (because as a
$\gl(m|n)_{\bar{0}}$-module it is isomorphic to the tensor product of a generalized Verma
module and a finite-dimensional module), it follows that $\ov{K}(\la)\in\ov{\mc{O}}_Y^f$.

\fi

By a standard argument a finitely generated $\SG$-module $\ov{M}$
that as $\ov{\mf{l}}_Y$-module is a direct sum of
$L(\ov{\mf{l}}_Y,\mu)$, $\mu\in{\ov{\mc{P}}_Y}$, with a locally
nilpotent $\ov{\mf{u}}_Y$-action, has a finite filtration by
highest weight modules, which are quotients of $\ov{K}(\la)$, for
$\la\in{\ov{\mc{P}}_Y}$. Thus $\ov{M}\in \ov{\mc{O}}_Y^f$ and
hence the proposition follows.
\end{proof}

\thmref{matching:modules} and \propref{finite:filt} give the
following.

\begin{cor}\label{G-finite:filt}
\begin{itemize}
\item[(i)] The module $\wt{K}(\la)\in \wt{\mc{O}}_Y^f$, for all
    $\la\in{\wt{\mc{P}}_Y}$.  Hence the category $\wt{\mc{O}}_Y^f$ is the category of
    finitely generated $\DG$-modules that as $\wt{\mf{l}}_Y$-modules are direct sums
    of $L(\wt{\mf{l}}_Y,\mu)$, $\mu\in{\wt{\mc{P}}_Y}$, with a locally nilpotent
    $\wt{\mf{u}}_Y$-action.
\item[(ii)] The module ${K}(\la)\in
{\mc{O}}_Y^f$, for all $\la\in\mc{P}_Y$. Hence the category ${\mc{O}}_Y^f$ is the category of
finitely generated $\G$-modules that as ${\mf{l}}_Y$-modules are
direct sums of $L({\mf{l}}_Y,\mu)$, $\mu\in\mc{P}_Y$, with a locally
nilpotent ${\mf{u}}_Y$-action.
\end{itemize}
\end{cor}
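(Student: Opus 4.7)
The plan is to deduce this corollary from \propref{finite:filt} and \thmref{matching:modules} by exploiting the exactness of the functors $T$ and $\ov{T}$ established in \propref{functor}. The strategy has two layers: first transport the finite-length property from $\ov{K}(\la^\natural)$ back to $\wt{K}(\la^\theta)$ using $\ov{T}$, then forward to $K(\la)$ using $T$; second, run the ``standard argument'' from the proof of \propref{finite:filt} in each of the new settings.

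For the first assertion of part (i), I would fix $\la \in \mc{P}_Y$ and use axiom (ii) in the definition of $\wt{\mc{O}}_Y$ to obtain a descending filtration $\wt{K}(\la^\theta) = \wt{M}_0 \supseteq \wt{M}_1 \supseteq \cdots$ with $\wt{M}_i/\wt{M}_{i+1} \cong \wt{L}(\nu_i^\theta)$ for some $\nu_i \in \mc{P}_Y$. Applying the exact functor $\ov{T}$ and invoking \thmref{matching:modules}, I obtain a descending filtration of $\ov{K}(\la^\natural) = \ov{T}(\wt{K}(\la^\theta))$ whose successive quotients are the nonzero modules $\ov{L}(\nu_i^\natural)$. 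Since $\ov{K}(\la^\natural) \in \ov{\mc{O}}_Y^f$ by \propref{finite:filt}, only finitely many steps can be nontrivial, and hence $\wt{K}(\la^\theta) \in \wt{\mc{O}}_Y^f$. For the first assertion of part (ii), I would simply apply $T$ to the now-finite filtration of $\wt{K}(\la^\theta)$; by \thmref{matching:modules} this produces a finite filtration of $T(\wt{K}(\la^\theta)) = K(\la)$ with composition factors $L(\nu_i)$, so $K(\la) \in \mc{O}_Y^f$.

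For the second assertion in each part, the plan is to imitate the ``standard argument'' sketched at the end of \propref{finite:filt}. Given a finitely generated $\DG$-module (respectively $\G$-module) that decomposes over $\wt{\mf{l}}_Y$ (respectively $\mf{l}_Y$) as a direct sum of $L(\wt{\mf{l}}_Y,\mu)$'s (resp.\ $L(\mf{l}_Y,\mu)$'s) for $\mu \in \wt{\mc{P}}_Y$ (resp.\ $\mc{P}_Y$), and on which $\wt{\mf{u}}_Y$ (resp.\ $\mf{u}_Y$) acts locally nilpotently, a finite generating set may be chosen to consist of $\wt{\mf{l}}_Y$- (resp.\ $\mf{l}_Y$-) highest weight vectors, yielding a finite filtration by highest weight quotients of parabolic Verma modules $\wt{K}(\mu^\theta)$ (resp.\ $K(\mu)$). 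Each step lies in $\wt{\mc{O}}_Y^f$ (resp.\ $\mc{O}_Y^f$) by the first assertion, so the whole module does as well; the reverse inclusion is immediate from the defining axioms (i) and (ii) of \secref{sec:O}.

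I do not foresee a real obstacle: the corollary is a formal functorial translation of \propref{finite:filt} across $T$ and $\ov{T}$. The only mild subtlety to verify is that applying $T$ or $\ov{T}$ to a descending filtration really yields a descending filtration with the expected successive quotients, which is automatic because these functors are defined as projections onto weight subspaces and are therefore exact on the category of semisimple $\wt{\h}$-modules.
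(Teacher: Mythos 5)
Your proposal is correct and follows essentially the same route the paper intends: the paper derives \corref{G-finite:filt} directly from \thmref{matching:modules} and \propref{finite:filt}, i.e.\ transporting the finite-length property of $\ov{K}(\la^\natural)=\ov{T}(\wt{K}(\la^\theta))$ back to $\wt{K}(\la^\theta)$ and then to $K(\la)=T(\wt{K}(\la^\theta))$ via the exactness of $\ov{T}$ and $T$, and then invoking the same standard filtration argument used at the end of the proof of \propref{finite:filt}. No gaps beyond the level of detail the paper itself leaves implicit.
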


\begin{rem}\label{rmk:intro}
\propref{finite:filt} and its \corref{G-finite:filt} imply that the categories
$\ov{\mc{O}}^f_Y$ and ${\mc{O}}^f_Y$ are the categories $\mc{O}^{++}_{{\bf m}|\infty}$
and $\mc{O}^{++}_{{\bf m}+\infty}$ of \cite{CW2}, respectively.  We note that the proof
of \propref{finite:filt} that we have presented above is elementary. In the proof above
we have only used the rather easy \lemref{cor:cw33}.
\end{rem}

By \thmref{matching:modules} and \propref{functor} and \corref{G-finite:filt}, we have the following.

\begin{cor}\label{Composition-factors} Let $\la,\mu\in\mc{P}_Y$.
The numbers of composition factors of $\wt{K}(\la^\theta)$, ${K}(\la)$ and
$\ov{K}(\la^\natural)$ that are isomorphic to $\wt{L}(\mu^\theta)$, ${L}(\mu)$ and
$\ov{L}(\mu^\natural)$, respectively, are the same.
\end{cor}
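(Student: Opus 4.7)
The plan is to deduce the corollary directly from the combined strength of \thmref{matching:modules}, \propref{functor}, and \corref{G-finite:filt}. The key point is that the two functors $T$ and $\ov{T}$ carry the three parabolic Verma modules in question to each other and, at the same time, match irreducibles to irreducibles, so that composition series transport cleanly.

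First I would invoke \corref{G-finite:filt} to guarantee that $\wt{K}(\la^\theta)$ lies in $\wt{\mc{O}}_Y^f$, hence admits a finite composition series
\begin{equation*}
\wt{K}(\la^\theta)=\wt{M}_0\supseteq\wt{M}_1\supseteq\cdots\supseteq\wt{M}_N=0,
\end{equation*}
with $\wt{M}_i/\wt{M}_{i+1}\cong\wt{L}(\nu_i^\theta)$ for some $\nu_i\in\mc{P}_Y$. Applying the exact functor $T$ from \propref{functor} term-by-term produces a filtration
\begin{equation*}
K(\la)=T(\wt{M}_0)\supseteq T(\wt{M}_1)\supseteq\cdots\supseteq T(\wt{M}_N)=0
\end{equation*}
whose subquotients are $T(\wt{M}_i/\wt{M}_{i+1})\cong T(\wt{L}(\nu_i^\theta))=L(\nu_i)$ by \thmref{matching:modules}. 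The crucial observation is that each such $L(\nu_i)$ is non-zero, so no composition factor is lost; consequently this is already a composition series of $K(\la)$, and for each $\mu\in\mc{P}_Y$ the multiplicity of $\wt{L}(\mu^\theta)$ in $\wt{K}(\la^\theta)$ equals the multiplicity of $L(\mu)$ in $K(\la)$. The identical argument with $\ov{T}$ in place of $T$, using $\ov{T}(\wt{K}(\la^\theta))=\ov{K}(\la^\natural)$ and $\ov{T}(\wt{L}(\mu^\theta))=\ov{L}(\mu^\natural)$ from \thmref{matching:modules}, yields the corresponding equality for $\ov{K}(\la^\natural)$.

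There is essentially no obstacle here, since all the serious work has already been absorbed into the matching theorem for parabolic Verma and irreducible modules and into the exactness of the truncation functors. The only small point to be careful about is that one must know the composition series is finite before transporting it, which is exactly what \corref{G-finite:filt}(i) supplies; without that, the argument would have to be re-cast in terms of formal multiplicities rather than an honest composition series.
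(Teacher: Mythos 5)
Your proposal is correct and is essentially the paper's own argument: the paper derives this corollary by exactly the combination you use, namely finiteness of the composition series of $\wt{K}(\la^\theta)$ from \corref{G-finite:filt}, exactness of $T$ and $\ov{T}$ from \propref{functor}, and the matching of parabolic Verma and irreducible modules from \thmref{matching:modules}. Your write-up simply makes explicit the transport of the composition series, which is the intended reading of the paper's one-line citation.
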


Recall the super Bruhat ordering for weights in $\ov{\mf{h}}^*$ (see e.g.~\cite[\S
2-b]{B} or \cite[\S 2.3]{CW2}) which we denote by $\succcurlyeq$. Let us denote by $\ge$
the classical Bruhat ordering on $\mf{h}^*$. As a further application we present a super
analogue of a classical theorem of BGG.

\begin{cor} Let $\la\in{\mc{P}}_Y$ and $\gamma\in\ov{\mf{h}}^*$. If $\ov{L}(\gamma)$ is a subquotient of $\ov{K}(\la^\natural)$, then
$\la^\natural\succcurlyeq\gamma$.
\end{cor}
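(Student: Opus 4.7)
The plan is to transfer the assertion to its classical analogue for $\G$ via the functors $T$ and $\ov{T}$ of \secref{Tfunctors}, and then to convert the resulting classical inequality into the desired super Bruhat inequality under the bijection $\mc{P}_Y\leftrightarrow\ov{\mc{P}}_Y$, $\nu\mapsto\nu^\natural$.

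First, since $\ov{K}(\la^\natural)=\ov{T}(\wt{K}(\la^\theta))$ lies in $\ov{\mc{O}}_Y^f$ by \propref{finite:filt}, every composition factor $\ov{L}(\gamma)$ also lies in $\ov{\mc{O}}_Y^f$; in particular $\gamma=\mu^\natural$ for some $\mu\in\mc{P}_Y$. So it suffices to prove $\la^\natural\succcurlyeq\mu^\natural$ for any such $\mu$.

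Next, by \thmref{matching:modules}, the exactness of $T$ and $\ov{T}$ in \propref{functor}, and the one-to-one correspondence of composition factors in \corref{Composition-factors}, the multiplicity of $\ov{L}(\mu^\natural)$ in $\ov{K}(\la^\natural)$ equals the multiplicity of $L(\mu)$ in $K(\la)$. Since this multiplicity is positive by assumption, the classical BGG theorem, applied to the parabolic Verma module $K(\la)$ for the (ordinary) Lie algebra $\G$, yields $\la\geq\mu$ in the classical Bruhat order on $\mf{h}^*$.

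The remaining and main step is the purely combinatorial implication that $\la\geq\mu$ on $\mc{P}_Y$ (classical) forces $\la^\natural\succcurlyeq\mu^\natural$ on $\ov{\mc{P}}_Y$ (super). I would handle this by unwinding the generators of each order. The classical order $\la\geq\mu$ is generated by elementary ``dot'' reflections $\nu\to s_\alpha\cdot\nu$ by positive roots $\alpha$ of $\mf{u}_Y$; under the bijection $\nu\mapsto\nu^\natural$ one checks that each such elementary move is mimicked by a finite chain of super Bruhat moves. The cases to verify are: (a) $\alpha=\epsilon_i-\epsilon_j$ with $i,j\in[-m,-1]$, which survives unchanged on the $\SG$-side; (b) $\alpha=\epsilon_i-\epsilon_j$ with $i,j\in\N$, which on the $\natural$-side is reflected (via the transpose) by the corresponding super simple root $\epsilon_{i-\hf}-\epsilon_{j-\hf}$; and (c) the mixed case $i\in[-m,-1]$, $j\in\N$, which on the $\SG$-side unfolds into a chain of odd and even super Bruhat steps entirely analogous to the sequence of odd reflections constructed in \secref{afterkk}. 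The bookkeeping for case (c), in particular tracing how one classical Bruhat step using $\beta_{-1}=\epsilon_{-1}-\epsilon_1$ decomposes into an explicit super Bruhat chain involving the odd simple root $\alpha_{-1}$, is the main technical obstacle.
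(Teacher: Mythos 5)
Your first three steps coincide with the paper's proof: reduce to $\gamma=\mu^\natural$ with $\mu\in\mc{P}_Y$, use \corref{Composition-factors} to transfer the hypothesis to the statement that $L(\mu)$ is a subquotient of $K(\la)$ over $\G$, and apply the classical BGG theorem to obtain $\mu\le\la$. The gap is in your final step. The paper settles the implication $\mu\le\la\Rightarrow\mu^\natural\preccurlyeq\la^\natural$ by citing \cite[Lemma 4.6]{CW2}; you propose instead to prove this comparison of the classical and super Bruhat orders directly, but your sketch does not actually establish it. You yourself label case (c) (a Bruhat step through a mixed root $\epsilon_i-\epsilon_j$ with $i\in[-m,-1]$, $j\in\N$) as ``the main technical obstacle'' and leave it unresolved, so the argument is incomplete precisely at the point where the paper's citation does the work.

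Moreover, the parts you do sketch are shakier than you suggest. In case (b), since $\natural$ replaces $\la_+$ by its transpose, a single dot-reflection $s_{\epsilon_i-\epsilon_j}\cdot$ with $i,j\in\N$, which moves boxes between two \emph{rows} of $\la_+$, is not mirrored by ``the corresponding'' single super reflection in $\epsilon_{i-\hf}-\epsilon_{j-\hf}$ acting on the \emph{column} lengths recorded in $\la^\natural$; one classical move must in general be unwound into a chain of super Bruhat moves, and checking that such a chain exists compatibly with the $\rho$-shifts built into both orders is exactly the nontrivial content of the cited lemma. There are also two structural issues with the reduction to generators: the strong linkage coming from the BGG theorem for $K(\la)$ (a quotient of the full Verma module) involves dot-reflections in arbitrary positive roots, not only roots of $\mf{u}_Y$; and the intermediate weights $\nu$ in such a chain need not lie in $\mc{P}_Y$ (their positive parts need not be partitions), so the map $\nu\mapsto\nu^\natural$ you want to apply step by step is not even defined at intermediate stages. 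Either carry out the full combinatorial argument (handling these points), or do as the paper does and invoke \cite[Lemma 4.6]{CW2}.
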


\begin{proof}
Clearly $\gamma=\mu^\natural$ for some $\mu\in\mc{P}_Y$. By \corref{Composition-factors}
$\ov{L}(\mu^\natural)$ is a subquotient of $\ov{K}(\la^\natural)$ if and only if
${L}(\mu)$ is a subquotient of ${K}(\la)$. By the classical version of the BGG Theorem
(e.g.~\cite[Section 5.1]{H}) $\mu\leq\la$.  Now \cite[Lemma 4.6]{CW2} implies that
$\mu^\natural\preccurlyeq\la^\natural$.
\end{proof}

\subsection{Irreducible characters}

\begin{thm}\label{character} Let $\la\in\mc{P}_Y$. Let ${\rm ch}L(\la)=\sum_{\mu\in \mc{P}_Y}a_{\mu\la}{\rm ch}K(\mu)$.  Then
\begin{itemize}
\item[(i)] ${\rm ch}\wt{L}(\la^\theta)=\sum_{\mu\in
\mc{P}_Y}a_{\mu\la}{\rm ch}\wt{K}(\mu^\theta)$,
\item[(ii)]
${\rm ch}\ov{L}(\la^\natural)=\sum_{\mu\in
\mc{P}_Y}a_{\mu\la}{\rm ch}\ov{K}(\mu^\natural)$.
\end{itemize}
\end{thm}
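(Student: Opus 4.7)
The plan is to deduce both parts from the matching of decomposition numbers in \corref{Composition-factors} by a formal triangular-matrix inversion. First I would extract from \corref{G-finite:filt} that every parabolic Verma module in each of the three categories $\mc{O}_Y^f$, $\wt{\mc{O}}_Y^f$, and $\ov{\mc{O}}_Y^f$ has a finite composition series, and from \corref{Composition-factors} that the decomposition numbers coincide: for each $\mu,\la\in\mc{P}_Y$ set
\begin{equation*}
b_{\mu\la}:=[K(\la):L(\mu)]=[\wt{K}(\la^\theta):\wt{L}(\mu^\theta)]=[\ov{K}(\la^\natural):\ov{L}(\mu^\natural)].
\end{equation*}
Consequently the three character identities ${\rm ch}K(\la)=\sum_\mu b_{\mu\la}\,{\rm ch}L(\mu)$, ${\rm ch}\wt{K}(\la^\theta)=\sum_\mu b_{\mu\la}\,{\rm ch}\wt{L}(\mu^\theta)$, and ${\rm ch}\ov{K}(\la^\natural)=\sum_\mu b_{\mu\la}\,{\rm ch}\ov{L}(\mu^\natural)$ all hold with the same matrix $B=(b_{\mu\la})$, each sum being finite.

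Next, standard highest weight theory gives $b_{\la\la}=1$ and $b_{\mu\la}=0$ unless $\mu\le\la$ in a Bruhat-type partial order on $\mc{P}_Y$ whose down-sets are finite. Hence $B$ is unitriangular and admits an inverse $A=(a_{\mu\la})$ of the same shape. The hypothesis ${\rm ch}L(\la)=\sum_\mu a_{\mu\la}\,{\rm ch}K(\mu)$ in $\mc{O}_Y^f$ is precisely the statement that this sequence of $a_{\mu\la}$ realizes $B^{-1}$; performing the identical formal inversion inside the character rings of $\wt{\mc{O}}_Y^f$ and $\ov{\mc{O}}_Y^f$ with the same matrix $B$ yields ${\rm ch}\wt{L}(\la^\theta)=\sum_\mu a_{\mu\la}\,{\rm ch}\wt{K}(\mu^\theta)$ and ${\rm ch}\ov{L}(\la^\natural)=\sum_\mu a_{\mu\la}\,{\rm ch}\ov{K}(\mu^\natural)$, which are exactly (i) and (ii).

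The heart of the argument is really \corref{Composition-factors}; once that is in hand, all three categories share a common decomposition matrix, and the remaining step is a purely formal inversion of a unitriangular matrix. The only point needing care is verifying the unitriangularity and down-finiteness of $B$ with respect to the Bruhat order, which however is standard and can be read off from the usual arguments about parabolic highest weight modules in $\mc{O}_Y^f$ together with the length-finiteness supplied by \corref{G-finite:filt}.
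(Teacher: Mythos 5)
Your argument is sound, but it reaches the theorem by a genuinely different route from the paper's own proof. The paper's proof is a one-line deduction from \corref{comb:char1}: one applies the ring homomorphisms $\wt{\omega}$ and $\ov{\omega}$ to the hypothesis ${\rm ch}L(\la)=\sum_{\mu}a_{\mu\la}{\rm ch}K(\mu)$, using that they carry ${\rm ch}K(\mu)$ to ${\rm ch}\wt{K}(\mu^\theta)$ and ${\rm ch}\ov{K}(\mu^\natural)$ (immediate from the explicit product formulas for the parabolic Verma characters) and carry ${\rm ch}L(\la)$ to ${\rm ch}\wt{L}(\la^\theta)$ and ${\rm ch}\ov{L}(\la^\natural)$ by \corref{comb:char1}; no inversion occurs. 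You instead invert the common decomposition matrix: \corref{Composition-factors} gives one matrix $B=(b_{\mu\la})$ with ${\rm ch}K(\la)=\sum_\mu b_{\mu\la}{\rm ch}L(\mu)$, ${\rm ch}\wt{K}(\la^\theta)=\sum_\mu b_{\mu\la}{\rm ch}\wt{L}(\mu^\theta)$ and ${\rm ch}\ov{K}(\la^\natural)=\sum_\mu b_{\mu\la}{\rm ch}\ov{L}(\mu^\natural)$, all finite sums by \propref{finite:filt} and \corref{G-finite:filt}; substituting the hypothesis into the first identity and using the linear independence of the ${\rm ch}L(\nu)$ gives $\sum_\mu b_{\nu\mu}a_{\mu\la}=\delta_{\nu\la}$, and re-substituting in the other two categories yields (i) and (ii). Both arguments ultimately rest on \thmref{matching:modules}; the $\omega$-argument is shorter because it exploits the closed combinatorial form of the Verma characters, while yours is more categorical and would carry over to situations where no such explicit formulas are available, since it only uses the equality of decomposition numbers.

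One point to repair in your write-up: in this infinite-rank setting the down-sets of the Bruhat-type order on $\mc{P}_Y$ are \emph{not} finite (for instance $-k\epsilon_{-1}+k\epsilon_{1}\le 0$ for all $k\in\Z_+$), so you should not appeal to down-set finiteness. What you actually need, and have, is weaker: $B$ is unitriangular with columns of finite support (finite length of the parabolic Verma modules), and every interval $\{\nu \mid \mu\le\nu\le\la\}$ is finite because $\la-\mu$ is a finite $\Z_+$-combination of simple roots; this already makes the triangular inversion and the interchange of summations legitimate. Alternatively, by \remref{aux:339} the coefficients $a_{\mu\la}$ are classical parabolic Kazhdan-Lusztig numbers and vanish for all but finitely many $\mu$ once $\la$ is fixed, so all the sums involved are in fact finite.
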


\begin{proof}
Since $\ov{\omega}(\text{ch}K(\la))=\text{ch}\ov{K}(\la^\natural)$ and
$\wt{\omega}(\text{ch}K(\la))=\text{ch}\wt{K}(\la^\theta)$, for $\la\in\mc{P}_Y$, the
theorem follows directly from \corref{comb:char1}.
\end{proof}

\iffalse
\begin{proof}
The theorem follows from \thmref{matching:modules} and
\lemref{lem:aux1} together with the fact that $\wt{K}(\mu^\theta)$ and
$\wt{L}(\mu^\theta)$ are completely reducible as
$\wt{\mf{l}}_Y$-modules, for every $\mu\in \mc{P}_Y$.
\iffalse
By \corref{Composition-factors} if
$\text{ch}K(\la)=\sum_{\mu}b_{\mu\la}\text{ch}L(\mu)$, then
$\text{ch}\wt{K}(\la^\theta)=\sum_{\mu}b_{\mu\la}\text{ch}\wt{L}(\mu^\theta)$
and
$\text{ch}\ov{K}(\la^\natural)=\sum_{\mu}b_{\mu\la}\text{ch}\ov{L}(\mu^\natural)$.
Now $\text{ch}L(\la)=\sum_{\mu\in
\mc{P}_Y}a_{\mu\la}\text{ch}K(\mu)$ implies that the matrices
$(a_{\mu\la})$ and $(b_{\mu\la})$ are inverses to each other.
Since the set of characters of the parabolic Verma and that of the
irreducible modules in the respective categories are linearly
independent, the theorem follows.
\fi
\end{proof}
\fi

\begin{rem}\label{aux:339}
By \eqnref{aux:449} the coefficients $a_{\mu\la}$ in \thmref{character} equal
$\mf{l}_{\mu\la}(1)$, where $\mf{l}_{\mu\la}(q)$ are the classical (parabolic)
Kazhdan-Lusztig polynomials \cite{D, KL} (see also \cite[Proposition 4.4]{CW2}). Since by
\cite[Theorem 4.7]{CW2} the polynomials $\mf{l}_{\mu\la}(q)$ equal
$\ell_{\mu^\natural\la^\natural}(q)$ (see \eqnref{aux224}) \thmref{character} verifies
\cite[Conjecture 3.10]{CW2}, which is a parabolic version of a conjecture of Brundan
\cite[Conjecture 4.32]{B}. In particular, \thmref{character} in the special case
$Y=[-m,-2]$, together with \lemref{cor:cw33}, gives an independent new proof of the first
part of \cite[Theorem 4.37]{B}. We note that our results do not rely on \cite{B, Se}.
\end{rem}

Below we work out in more detail a character formula for the irreducible
$\gl(m|n)$-module $\ov{L}_n(\gamma)$, where $\gamma$ is a weight of the form
\begin{equation*}
\sum_{i=-m}^{-1}\gamma_i\epsilon_i+\sum_{j=1}^n\gamma_j\epsilon_{j-1/2},\quad\gamma_i,\gamma_j\in\Z,
\end{equation*}
with $\gamma_1\ge\gamma_2\ge\cdots\ge\gamma_n$. Recall that the one-dimensional
determinant module $\text{det}$ has (highest) weight ${\bf
1}_{m|n}=\sum_{i=-m}^{-1}\epsilon_i-\sum_{j=1}^n\epsilon_{j-1/2}$.  For $k\in\Z$ and an
$\ov{\mf{h}}$-semisimple $\gl(m|n)$-module $M$ with $\text{ch}M=\sum_{\eta}
\text{dim}{M}_\eta e^\eta$ we have
\begin{equation*}
\text{ch}(M\otimes\text{det}^{\otimes k})=\sum_{\eta}\text{dim}M_{\eta}e^{\eta+k{\bf 1}_{m|n}}.
\end{equation*}
Clearly $\ov{L}_n(\gamma+k{\bf 1}_{m|n})=\ov{L}_n(\gamma)\otimes\text{det}^{\otimes k}$.
Thus taking tensor product with a suitable power of the determinant module, if necessary,
we may assume that $\gamma_1\ge\gamma_2\ge\cdots\ge\gamma_n\ge 0$ and so
$\gamma\in\ov{\mc{P}}_Y$.

Let $\la\in\mc{P}_Y$ (with $Y=\emptyset$) be such that $\la^\natural=\gamma$.
We have $\ell(\la'_+)\le n$. Now
\thmref{character} (ii) (together with \lemref{cor:cw33})
implies that the character of the irreducible $\gl(m|n)$-module of
highest weight $\la^\natural$ equals to
\begin{align}\label{finite:char}
\text{ch}\ov{L}_n(\gamma)=
\sum_{\mu\in\mc{P}_Y,\ell(\mu'_+)\le n}a_{\mu\la}\text{ch}\ov{K}_n(\mu^\natural),
\end{align}
where $\ov{K}_n(\mu^\natural)$ is the parabolic Verma $\gl(m|n)$-module corresponding to
$Y=\emptyset$. As the coefficients $a_{\mu\la}$ are known by \remref{aux:339},
\eqnref{finite:char} gives the irreducible character for $\gl(m|n)$-module
$\ov{L}_n(\gamma)$. In the special case of
$\gamma_{-m}\ge\gamma_{-m+1}\ge\cdots\ge\gamma_{-1}$ we obtain an irreducible character
formula for finite-dimensional irreducible $\gl(m|n)$-module.  A formula (corresponding
to our case $Y=[-m,-2]$) was obtained in \cite{Se, B}.  \thmref{character} is obtained
using an approach very different from \cite{Se} and \cite{B}, and provides an independent
solution of the irreducible character problem.

\section{Kazhdan-Lusztig Polynomials}\label{section4}

\subsection{Homology of Lie superalgebras}\label{aux:homology} Let $L=L_{\bar{0}}\oplus L_{\bar{1}}$ be a
Lie superalgebra and let $\mc{T}(L)$ be the tensor algebra of $L$. Then
$\mc{T}(L)=\bigoplus_{n=0}^{\infty}\mc{T}^n(L)$ is an associative superalgebra with a
canonical $\Z$-gradation. For $v\in L_\epsilon$, we let $|v|:=\epsilon$,
$\epsilon\in\Z_2$. The exterior algebra of $L$ is the quotient algebra $\Lambda
(L):=\mc{T}(L)/J$, where $J$ is the homogeneous two-sided ideal of $\mc{T}(L)$ generated
by the elements of the form
  $$ x\otimes y+(-1)^{|x||y|}y\otimes x,$$
where $x$ and $y$ are homogeneous elements of $L$. The $\Lambda (L)$ is also an associative
superalgebra with a $\Z$-gradation inherited from $\mc{T}(L)$. More
precisely, we have $\Lambda (L)=\bigoplus_{n=0}^{\infty} \Lambda^n
L$, where $\Lambda^n L$ is the set of all homogeneous elements of
$\Z$-degree $n$ in $\Lambda (L)$, for each $n\ge 0$. For $\Z_2$-homogeneous elements
$x_1,x_2, \cdots,x_k\in L$, the image of the element $x_1\otimes
x_2\otimes\cdots\otimes x_k$ under the canonical quotient map from
$\mc{T}^k(L)$ to $\La^k(L)$ will be denoted by $x_1x_2\cdots x_k$.

For an $L$-module $V$, the $k$th homology group $H_k(L;V)$ of
$L$ with coefficient in $V$ is defined to be the $k$th homology
group of the following complex (see e.g.~\cite{T}):
\begin{equation*}
\cdots \stackrel{\partial}{\longrightarrow}\La^n(L)\otimes
V\stackrel{\partial}{\longrightarrow} \La^{n-1}(L)\otimes
V\stackrel{\partial}{\longrightarrow} \cdots
\stackrel{\partial}{\longrightarrow}\La^{1}(L)\otimes V
\stackrel{\partial}{\longrightarrow}\La^{0}(L)\otimes
V{\longrightarrow} 0,
\end{equation*}
where the boundary operator $\partial$ is given by
\begin{align}\label{aux:d}
&\partial(x_1  x_2\cdots x_n\otimes v)\\
:=&\sum_{1\le s<t\le
n}(-1)^{s+t+|x_s|\sum_{i=1}^{s-1}|x_i|+|x_t|\sum_{j=1}^{t-1}|x_j|+|x_s||x_t|}
[x_s,x_t]x_1\cdots\widehat{x}_s\cdots\widehat{x}_t\cdots x_n\otimes v\nonumber\\
&+\sum_{s=1}^n(-1)^{s+|x_s|\sum_{i=s+1}^{n}|x_i|}x_1\cdots\widehat{x}_s\cdots
x_n\otimes  x_sv.\nonumber
\end{align}
Here the $x_i$s are homogeneous elements in $L$ and $v\in V$.
Furthermore $[x_s,x_t]\in \La(L)$ denotes the linear term
corresponding to $[x_s,x_t]\in L$ and, as usual, $\widehat{y}$
indicates that the term $y$ is omitted.

\subsection{Comparison of homology groups}
Here and further we shall suppress the subscript $Y$ and denote $(\wt{\mf{u}}_Y)_-$,
$({\mf{u}}_Y)_-$ and $(\ov{\mf{u}}_Y)_-$ by $\wt{\mf{u}}_-$, ${\mf{u}}_-$ and
$\ov{\mf{u}}_-$, respectively.

For $\wt{M}\in\wt{\mc{O}}_Y^f$ we denote by $M=T(\wt{M})\in\mc{O}_Y^f$ and
$\ov{M}=\ov{T}(\wt{M})\in\ov{\mc{O}}_Y^f$. Let $\wt{d}:\Lambda({\mf{\wt{u}}}_-)\otimes
{\wt{M}}\rightarrow \Lambda({\mf{\wt{u}}}_-)\otimes {\wt{M}}$,
$d:\Lambda({\mf{u}}_-)\otimes {M}\rightarrow \Lambda({\mf{u}}_-)\otimes {M}$ and
$\ov{d}:\Lambda(\ov{\mf{u}}_-)\otimes \ov{M}\rightarrow \Lambda(\ov{\mf{u}}_-)\otimes
\ov{M}$ be the boundary operator of the complex of $\wt{\mf{u}}_-$-homology with
coefficients in $\wt{M}$, the boundary operator of the complex of $\mf{u}_-$-homology
with coefficients in $M$ and the boundary operator of the complex of
$\ov{\mf{u}}_-$-homology with coefficients in $\ov{M}$, respectively. Note that $\wt{d}$,
${d}$, and $\ov{d}$ are $\wt{\mf{l}}_Y$-homomorphism, $\mf{l}_Y$-homomorphism and
$\ov{\mf{l}}_Y$-homomorphism, respectively.

 The following lemma is easy.
\begin{lem}\label{lem:aux2}
We have
\begin{itemize}
\item[(i)] $T\big{(}\La(\wt{\mf{u}}_-)\big{)}=\La({\mf{u}}_-)$,
\item[(ii)] $\ov{T}\big{(}\La(\wt{\mf{u}}_-)\big{)}=\La({\ov{\mf{u}}}_-)$.
\end{itemize}
\end{lem}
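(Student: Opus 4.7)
My plan is to analyze the weight structure of $\wt{\mf{u}}_-$ and exploit the fact that $T$ and $\ov{T}$ are defined as projections onto specific weight subspaces of a semisimple $\wt{\h}$-module.

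First I would establish the key structural observation: every root vector $E_{rs}$ lying in $\wt{\mf{u}}_-$ satisfies $s \in [-m,-1]$. Indeed, $(\wt{\Delta}_Y)_+$ contains every positive root of the form $\epsilon_r - \epsilon_s$ with both $r,s \in \hf\N$, because such roots lie in $\oplus_{r \in \hf\N}\Z\alpha_r \subseteq \oplus_{r \in Y \cup \hf\N}\Z\alpha_r$. Consequently any positive root outside $(\wt{\Delta}_Y)_+$ must involve at least one index from $[-m,-1]$, and that index is necessarily the smaller one in the linear ordering $-m < \cdots < -1 < \hf < 1 < \tfrac{3}{2} < \cdots$. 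Therefore the corresponding negative root vector has its second index $s$ in $[-m,-1]$.

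Next I would compute the weight of a pure wedge $x_1 x_2 \cdots x_k \in \La(\wt{\mf{u}}_-)$, where $x_i = E_{r_i,s_i}$ are root vectors of $\wt{\mf{u}}_-$. This weight equals $\sum_i (\epsilon_{r_i} - \epsilon_{s_i})$. Since every $s_i$ lies in $[-m,-1]$, no cancellation can occur along the $\epsilon_r$-directions for $r \in \hf\N$: the coefficient of $\epsilon_r$ in the weight is simply the (nonnegative) number of indices $i$ with $r_i = r$. Hence the wedge has weight in $\Gamma$ (i.e., no half-integer component) if and only if $r_i \in [-m,-1] \cup \N$ for every $i$, equivalently every $x_i$ already belongs to $\mf{u}_-$. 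Exactly analogously, the weight lies in $\ov{\Gamma}$ if and only if every $r_i \in [-m,-1] \cup (\hf+\Z_+)$, which is equivalent to every $x_i$ belonging to $\ov{\mf{u}}_-$.

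To conclude I would fix a basis of $\La(\wt{\mf{u}}_-)$ consisting of pure wedge products of root vectors, which is automatically an $\wt{\h}$-weight basis. Then $T(\La(\wt{\mf{u}}_-))$ (respectively $\ov{T}(\La(\wt{\mf{u}}_-))$) is the span of those basis monomials whose total weight lies in $\Gamma$ (respectively $\ov{\Gamma}$); by the previous paragraph these are precisely the monomials that are wedge products of elements of $\mf{u}_-$ (respectively $\ov{\mf{u}}_-$), yielding the two claimed equalities. I do not anticipate any real obstacle: the argument is purely a matter of weight bookkeeping, and the crucial point that prevents any subtle cancellation is the uniform constraint $s \in [-m,-1]$ on the second index of every root vector in $\wt{\mf{u}}_-$.
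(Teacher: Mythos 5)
Your proof is correct: the identification of $T$ and $\ov{T}$ as truncations to the weight spaces with weights in $\Gamma$ and $\ov{\Gamma}$, together with the observation that every root vector of $\wt{\mf{u}}_-$ has its column index in $[-m,-1]$ (so the $\epsilon_r$-coefficients, $r\in\hf\N$, of the weight of a wedge monomial are nonnegative counts and no cancellation can occur), is exactly the weight bookkeeping that justifies the statement. The paper itself offers no argument here — it simply declares the lemma easy — and your write-up supplies precisely the intended verification.
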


The $\mf{l}_Y$-module $\La(\mf{u}_-)$ is a direct sum of
$L(\mf{l}_Y,\mu)$, $\mu\in \mc{P}_Y$, each appearing with finite
multiplicity. Using \cite{S, BR} one can show that
$\La(\ov{\mf{u}}_-)$, as an $\ov{\mf{l}}_Y$-module, is a direct
sum of $L(\ov{\mf{l}}_Y,\mu^\natural)$, $\mu\in \mc{P}_Y$, each
appearing with finite multiplicity (\cite[Lemma 3.2]{CK}).
Similarly it follows that  that $\La(\wt{\mf{u}}_-)$, as an
$\wt{\mf{l}}_Y$-module, is a also direct sum of
$L(\wt{\mf{l}}_Y,\mu^\theta)$, $\mu\in \mc{P}_Y$, each appearing
with finite multiplicity (\cite[Section 3.2.3]{CK}).

The $\mf{l}_Y$-module $\La(\mf{u}_-)\otimes M$ is of course
completely reducible. The $\wt{\mf{l}}_Y$-module
$\Lambda(\wt{\mf{u}}_-)\otimes\wt{M}$ and $\ov{\mf{l}}_Y$-module
$\Lambda(\ov{\mf{u}}_-)\otimes\ov{M}$ are completely reducible by
\cite[Theorem 3.2]{CK} and \cite[Theorem 3.1]{CK}, respectively.

\begin{lem}\label{boundary}
For $\wt{M}\in\wt{\mc{O}}_Y^f$ and $\la\in\mc{P}_Y$, we have
\begin{itemize}\label{lem:aux3}
\item[(i)] $T\big{(}\La(\wt{\mf{u}}_-)\otimes\wt{M}\big{)}=\La({\mf{u}}_-)\otimes{M}$,
 and thus
 $T\big{(}\La(\wt{\mf{u}}_-)\otimes\wt{L}(\la^\theta)\big{)}=\La({\mf{u}}_-)\otimes{L}(\la)$.
 Moreover, $T[\wt{d}]=d$.
\item[(ii)]
$\ov{T}\big{(}\La(\wt{\mf{u}}_-)\otimes\wt{M}\big{)}=\La({\ov{\mf{u}}}_-)\otimes\ov{M}$,
and thus
$\ov{T}\big{(}\La(\wt{\mf{u}}_-)\otimes\wt{L}(\la^\theta)\big{)}
=\La({\ov{\mf{u}}}_-)\otimes\ov{L}(\la^\natural)$. Moreover,
$\ov{T}[\wt{d}]=\ov{d}$.
\end{itemize}
\end{lem}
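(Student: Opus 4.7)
The plan is to reduce the lemma to a single weight-theoretic observation: whenever $X$ and $Y$ are $\wt{\h}$-semisimple modules such that every weight of $X$ and of $Y$ has non-negative $\epsilon_r$-coefficient for all $r\in\hf+\Z_+$, one has $T(X\otimes Y)=T(X)\otimes T(Y)$. Indeed, the condition $\alpha+\beta\in\Gamma$ forces the sum of the half-integer parts of $\alpha$ and $\beta$ to be zero; combined with the individual non-negativity, each half-integer part must vanish, so $\alpha,\beta\in\Gamma$.

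The key step is to verify this non-negativity for $X=\La(\wt{\mf{u}}_-)$ and $Y=\wt{M}$. For $\La(\wt{\mf{u}}_-)$: since $\wt\Delta_Y$ contains every root with both indices in $\hf\N$, each weight of $\wt{\mf{u}}_-$ is a difference $\epsilon_s-\epsilon_r$ with at least one of $r,s$ in $[-m,-1]$; inspecting cases shows the coefficient of every $\epsilon_t$ with $t\in\hf+\Z_+$ is either $0$ or $+1$, and the exterior algebra inherits this property. For $\wt{M}\in\wt{\mc{O}}_Y^f$: I would decompose $\wt{M}$ as an $\wt{\mf{l}}_Y$-module into copies of $L(\wt{\mf{l}}_Y,\mu^\theta)$ and invoke the character formula \eqnref{eqn:hookschur}. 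The hook Schur polynomial $HS_{\mu'_+}(x_{1/2},x_1,x_{3/2},\ldots)$ is a polynomial (not a Laurent polynomial) in the variables indexed by $\hf\N$, so each weight that appears has non-negative half-integer part.

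Combining these with \lemref{lem:aux2}(i) then yields $T(\La(\wt{\mf{u}}_-)\otimes\wt{M})=T(\La(\wt{\mf{u}}_-))\otimes T(\wt{M})=\La(\mf{u}_-)\otimes M$, and specializing to $\wt{M}=\wt{L}(\la^\theta)$ using $T(\wt{L}(\la^\theta))=L(\la)$ from \thmref{matching:modules} gives the second equality. For $T[\wt d]=d$: the Chevalley--Eilenberg formula \eqnref{aux:d} makes it immediate that $\wt d$ preserves $\wt{\h}$-weight, so it restricts to the $\Gamma$-weight subspace $\La(\mf{u}_-)\otimes M$; on this restriction the brackets $[x_s,x_t]$ and the actions $x_s v$ automatically lie in $\mf{u}_-$ and $M$ respectively (since each is the $\Gamma$-weight part of its tilde), so the formula for $\wt d$ collapses verbatim to the defining formula for $d$.

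Part (ii) proceeds by the identical template with $\hf+\Z_+$ replaced by $\N$: each weight of $\wt{\mf{u}}_-$ has non-negative coefficient on each $\epsilon_s$ with $s\in\N$ by the same case analysis, the same polynomial-positivity of $HS_{\mu'_+}$ in the variables $x_j$, $j\in\N$, supplies the non-negativity for $\wt{M}$, and \lemref{lem:aux2}(ii), \thmref{matching:modules}, together with the weight-preservation of $\wt d$, finish the argument. The main obstacle is the non-negativity step above; once it is in place, everything else is pure bookkeeping.
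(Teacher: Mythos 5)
Your proof is correct and follows essentially the same route as the paper: Lemma~\ref{lem:aux2}, Theorem~\ref{matching:modules}, compatibility of $T$ and $\ov{T}$ with tensor products, and the observation that the boundary formula \eqnref{aux:d} for $\wt{d}$ restricts verbatim to $d$ (resp.\ $\ov{d}$) on the $\Gamma$- (resp.\ $\ov{\Gamma}$-) weight subspace. The only difference is that you spell out, via the non-negativity of the positive-indexed weight coefficients of $\La(\wt{\mf{u}}_-)$ and of objects of $\wt{\mc{O}}_Y^f$, the tensor-product compatibility that the paper simply asserts, which is a worthwhile (and correct) addition of detail rather than a different method.
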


\begin{proof} By \lemref{lem:aux2}, \thmref{matching:modules} and the compatibility of $T$ and $\ov{T}$
under tensor product we have the first part of (i) and (ii). Using
the definitions \eqnref{aux:d} of $\wt{d}$, $d$ and $\ov{d}$, we
have $\wt{d}(v)=d(v)$ for all $v\in \La({\mf{u}}_-)\otimes{M}$ and
$\wt{d}(w)=\ov{d}(w)$ for all $v\in
\La({\ov{\mf{u}}}_-)\otimes\ov{M}$. Hence we have $T[\wt{d}]=d$
and $\ov{T}[\wt{d}]=\ov{d}$.
\end{proof}

Lemmas \ref{lem:aux1} and \ref{lem:aux3} now imply the following.

\begin{lem}\label{lem:aux4} Suppose $\Lambda(\wt{\mf{u}}_-)\otimes\wt{M}\cong\bigoplus_{\mu\in \mc{P}_Y}L(\wt{\mf{l}}_Y,\mu^\theta)^{m(\mu)}$, as $\wt{\mf{l}}_Y$-modules.  Then
\begin{itemize}
\item[(i)] $\Lambda({\mf{u}}_-)\otimes{M}\cong\bigoplus_{\mu\in
\mc{P}_Y}L({\mf{l}}_Y,\mu)^{m(\mu)}$, as ${\mf{l}}_Y$-modules.
\item[(ii)]
$\Lambda(\ov{\mf{u}}_-)\otimes\ov{M}\cong\bigoplus_{\mu\in
\mc{P}_Y}L(\ov{\mf{l}}_Y,\mu^\natural)^{m(\mu)}$, as
$\ov{\mf{l}}_Y$-modules.
\end{itemize}
\end{lem}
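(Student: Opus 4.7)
The plan is to apply the truncation functors $T$ and $\ov{T}$ directly to the hypothesized $\wt{\mf{l}}_Y$-module decomposition of $\Lambda(\wt{\mf{u}}_-)\otimes\wt{M}$ and then identify the two sides of the resulting isomorphism using the two preceding lemmas. The structure is entirely formal: the hard work has already been absorbed into \lemref{lem:aux1} and \lemref{lem:aux3}.

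First I would observe that $T$, being defined as projection onto the weight subspaces with weights in $\Gamma\subseteq\wt{\Gamma}$, automatically commutes with arbitrary direct sums and carries $\wt{\mf{l}}_Y$-module isomorphisms to $\mf{l}_Y$-module isomorphisms on the images. Applying $T$ to the decomposition given in the hypothesis then yields an $\mf{l}_Y$-module isomorphism
\[
T\big{(}\Lambda(\wt{\mf{u}}_-)\otimes\wt{M}\big{)}\;\cong\;\bigoplus_{\mu\in\mc{P}_Y}T\big{(}L(\wt{\mf{l}}_Y,\mu^\theta)\big{)}^{m(\mu)}.
\]
By \lemref{lem:aux3}(i), the left-hand side equals $\Lambda(\mf{u}_-)\otimes M$, and by \lemref{lem:aux1}(i), each summand on the right-hand side equals $L(\mf{l}_Y,\mu)$. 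Combining these two identifications gives (i). The argument for (ii) is verbatim the same, using $\ov{T}$ in place of $T$ and invoking the (ii) clauses of \lemref{lem:aux1} and \lemref{lem:aux3} in the parallel roles.

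I do not foresee any substantive obstacle. The lemma is essentially a bookkeeping consequence of the two preceding ones, as the phrase ``Lemmas \ref{lem:aux1} and \ref{lem:aux3} now imply the following'' in the text suggests. The only items worth checking carefully are that weight-space projection preserves multiplicities—which is automatic since $T(L(\wt{\mf{l}}_Y,\mu^\theta))=L(\mf{l}_Y,\mu)\neq 0$ for every $\mu\in\mc{P}_Y$, so no summand collapses—and that the $\wt{\mf{l}}_Y$-complete reducibility of $\Lambda(\wt{\mf{u}}_-)\otimes\wt{M}$ recorded just before \lemref{boundary} is genuinely available, so that the hypothesis of \lemref{lem:aux4} is nonvacuous in the eventual applications.
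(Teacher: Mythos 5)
Your proposal is correct and is exactly the argument the paper intends: the paper proves this lemma by the single remark that it follows from \lemref{lem:aux1} and \lemref{lem:aux3}, i.e.\ one applies the exact weight-truncation functors $T$ and $\ov{T}$ (which commute with direct sums and send $\wt{\mf{l}}_Y$-isomorphisms to ${\mf{l}}_Y$- resp.\ $\ov{\mf{l}}_Y$-isomorphisms) to the hypothesized decomposition and identifies the two sides via those lemmas. Your extra checks (no summand collapses, complete reducibility making the hypothesis nonvacuous) are accurate but not needed beyond what the functorial argument already gives.
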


By \lemref{boundary} and \eqnref{T}, we have the following commutative diagram.
\begin{eqnarray}\label{compare-complexes}
\CD
\cdots @>>> \La^{n+1}(\wt{\mf{u}}_-)\otimes\wt{M} @>\wt{d}>>\La^{n}(\wt{\mf{u}}_-)\otimes\wt{M} @>\wt{d}>>\La^{n-1}(\wt{\mf{u}}_-)\otimes\wt{M} @>\wt{d}>>\cdots \\
@.  @VVT_{\La^{n+1}(\wt{\mf{u}}_-)\otimes\wt{M}}V @VVT_{\La^{n}(\wt{\mf{u}}_-)\otimes\wt{M}}V @VVT_{\La^{n-1}(\wt{\mf{u}}_-)\otimes\wt{M}}V\\
\cdots @>>> \La^{n+1}({\mf{u}}_-)\otimes {M} @>{d}>>\La^{n}({\mf{u}}_-)\otimes{M} @>{d}>>\La^{n-1}({\mf{u}}_-)\otimes{M} @>{d}>>\cdots\\
 \endCD
\end{eqnarray}
Thus $T$ induces an $\mf{l}_Y$-homomorphism from $H_n(\wt{\mf{u}}_-;\wt{M})$ to $H_n({\mf{u}}_-;{M})$.
Similarly, $\ov{T}$ induces an $\ov{\mf{l}}_Y$-homomorphism from $H_n(\wt{\mf{u}}_-;\wt{M})$ to $H_n(\ov{\mf{u}}_-;\ov{M})$. Moreover, we have the following.

\begin{thm}\label{matching:KL} We have for $n\ge 0$
\begin{itemize}
\item[(i)] $T(H_n(\wt{\mf{u}}_-;\wt{M}))\cong H_n({\mf{u}}_-;{M})$, as $\mf{l}_Y$-modules.
\item[(ii)] $\ov{T}(H_n(\wt{\mf{u}}_-;\wt{M}))\cong H_n(\ov{\mf{u}}_-;\ov{M})$, as $\ov{\mf{l}}_Y$-modules.
\end{itemize}
\end{thm}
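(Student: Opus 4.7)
The plan is to exploit the exactness of $T$ and $\ov{T}$, together with the identifications $T[\wt{d}]=d$ and $\ov{T}[\wt{d}]=\ov{d}$ established in \lemref{boundary}, to reduce the statement to a purely formal verification that these functors commute with the formation of homology. I will describe the argument for $T$; the case of $\ov{T}$ is entirely parallel.

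First I would observe that each term $C_n:=\La^n(\wt{\mf{u}}_-)\otimes \wt{M}$ of the Chevalley--Eilenberg complex is an $\wt{\h}$-semisimple module and that $\wt{d}$ is $\wt{\h}$-equivariant, so the complex $C_\bullet$ splits as a direct sum of subcomplexes indexed by $\wt{\h}$-weights. Because $T$ is by construction the projection onto the sum of weight spaces with weights in $\Gamma$, it commutes with arbitrary direct sums, is exact, and sends kernels and images of weight-preserving maps to kernels and images of their restrictions; explicitly, for a weight-preserving morphism $f$ between $\wt{\h}$-semisimple modules one has $T(\ker f)=\ker T[f]$ and $T(\operatorname{im} f)=\operatorname{im} T[f]$.

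Next I would assemble the pieces. By \lemref{boundary} together with the commutative diagram \eqnref{compare-complexes}, $T$ carries the $\wt{\mf{u}}_-$-complex $(\La^\bullet(\wt{\mf{u}}_-)\otimes\wt{M},\wt{d})$ onto the $\mf{u}_-$-complex $(\La^\bullet(\mf{u}_-)\otimes M,d)$ through $\mf{l}_Y$-equivariant identifications. Combining this with the previous paragraph gives
\begin{equation*}
T\bigl(H_n(\wt{\mf{u}}_-;\wt{M})\bigr)
= T(\ker \wt{d}_n)/T(\operatorname{im} \wt{d}_{n+1})
= \ker d_n/\operatorname{im} d_{n+1}
= H_n(\mf{u}_-;M)
\end{equation*}
as $\mf{l}_Y$-modules, which is (i). The same argument with $\ov{T}$, $\ov{d}$, $\ov{\mf{u}}_-$, $\ov{M}$, $\ov{\mf{l}}_Y$ in place of the corresponding unbarred symbols yields (ii).

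I do not anticipate any genuine obstacle: the whole argument reduces to the elementary fact that projecting onto a union of weight spaces is exact, which, combined with the concrete compatibility $T[\wt{d}]=d$ (respectively $\ov{T}[\wt{d}]=\ov{d}$) from \lemref{boundary}, makes the commutation with homology automatic. The only point worth a moment's care is that $T$ and $\ov{T}$ here must be applied to the auxiliary $\wt{\h}$-semisimple modules $\La^n(\wt{\mf{u}}_-)\otimes\wt{M}$ and to maps between them, rather than to objects of $\wt{\mc{O}}_Y$ proper; but since both functors are defined at the level of $\wt{\h}$-semisimple modules by weight-space projection, this extension is harmless.
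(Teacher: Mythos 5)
Your proposal is correct and follows essentially the same route as the paper's own proof: both use \lemref{boundary} and the diagram \eqnref{compare-complexes} to identify $T(\operatorname{Ker}\wt{d})=\operatorname{Ker}d$ and $T(\operatorname{Im}\wt{d})=\operatorname{Im}d$, and then invoke the exactness of the weight-projection functor $T$ (respectively $\ov{T}$) to pass to homology. The extra remarks you make about weight-space splitting of the complex are just an explicit justification of these two identities, which the paper records in the same way.
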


\begin{proof} We shall only prove (i), as the argument for (ii) is
parallel. By \lemref{boundary} and \eqnref{compare-complexes}, we have
 $$T({\rm Ker}(\wt{d}))={\rm Ker}(\wt{d}) \cap (\La({\mf{u}}_-)\otimes{M})={\rm Ker}(d)$$ and
 $$ T({\rm Im}(\wt{d}))={\rm Im}(\wt{d}) \cap (\La({\mf{u}}_-)\otimes{M})={\rm Im}(d).$$
Since $T$ is an exact functor, we have
 $$T(\bigoplus_{n\ge 0}H_n(\wt{\mf{u}}_-;\wt{M}))= T({\rm Ker}(\wt{d}))/T({\rm Im}(\wt{d}))= {\rm Ker}(d)/{\rm Im}(d)=\bigoplus_{n\ge 0}H_n({\mf{u}}_-;{M}).$$ This
completes the proof of the theorem.
%
\iffalse Let
${L}(\wt{\mf{l}}_Y,\mu^\theta)\subseteq\Lambda(\wt{\mf{u}}_-)\otimes\wt{M}$
be an $\wt{\mf{l}}_Y$-submodule of highest weight $\mu^\theta$
with $\mu\in \mc{P}_Y$. By \lemref{lem:change} there exist unique
(up to scalar) vectors $v_{\mu}$ and $v_{\mu^\natural}$ in
${L}(\wt{\mf{l}}_Y,\mu^\theta)$ of weight $\mu$ and
$\mu^\natural$, respectively, that are highest weight with respect
to $\wt{\mf{b}}_Y^c(n)$ and $\wt{\mf{b}}_Y^s(n)$, respectively,
for $n\gg 0$. By \lemref{lem:aux3} we have
$v_{\mu}\in\La(\mf{u}_-)\otimes M$ and
$v_{\mu^\natural}\in\La(\ov{\mf{u}}_-)\otimes \ov{M}$, and
$v_{\mu}$ and $v_{\mu^\natural}$ are highest weight vectors for
$L(\mf{l}_Y,\mu)$ and $L(\ov{\mf{l}}_Y,\mu^\natural)$,
respectively. Comparing the formula for $\wt{d}$ in
\eqnref{formula:dtilde} with those for $d$ and $\ov{d}$, we
conclude that $\wt{d}(v_{\mu})=d(v_{\mu})$ and
$\wt{d}(v_{\mu^\natural})=\ov{d}(v_{\mu^\natural})$.  It follows
that $\wt{d}(v_{\mu})=0$ if and only if $d(v_{\mu})=0$, and
similarly $\wt{d}(v_{\mu^\natural})=0$ if and only if
$\ov{d}(v_{\mu^\natural})=0$. Now $\wt{d}$, $d$ and $\ov{d}$ are
$\wt{\mf{l}}_Y$-, $\mf{l}_Y$-, and $\ov{\mf{l}}_Y$-homomorphisms,
respectively.  By \lemref{lem:aux4} ${\rm Ker}\wt{d}\cong
\bigoplus_{\mu\in \mc{P}_Y}L(\wt{\mf{l}}_Y,\mu^\theta)^{m(\mu)}$
if and only if ${\rm Ker}{d}\cong \bigoplus_{\mu\in
\mc{P}_Y}L({\mf{l}}_Y,\mu)^{m(\mu)}$, and we have an analogous
correspondence for ${\rm Im}\wt{d}$ and ${\rm Im}d$ as well,
proving (i).  Since we have a similar statement about $\wt{d}$ and
$\ov{d}$ as well, this proves (ii). \fi
\end{proof}

\thmref{matching:modules} implies the following.
\begin{cor}\label{matching:KL1} For $\la\in \mc{P}_Y$ and $n\ge
0$, we have
\begin{itemize}
\item[(i)] $T(H_n(\wt{\mf{u}}_-;\wt{L}(\la^\theta)))\cong
H_n({\mf{u}}_-;{L}(\la))$, as $\mf{l}_Y$-modules. \item[(ii)]
$\ov{T}(H_n(\wt{\mf{u}}_-;\wt{L}(\la^\theta)))\cong
H_n(\ov{\mf{u}}_-;\ov{L}(\la^\natural))$, as
$\ov{\mf{l}}_Y$-modules.
\end{itemize}
\end{cor}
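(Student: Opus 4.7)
The plan is to obtain the corollary as a direct specialization of \thmref{matching:KL}, taking $\wt{M} := \wt{L}(\la^\theta)$. First I would verify that this $\wt{M}$ lies in $\wt{\mc{O}}_Y^f$: the irreducible $\DG$-module $\wt{L}(\la^\theta)$ sits in $\wt{\mc{O}}_Y$ by construction (it is a composition factor of the parabolic Verma module $\wt{K}(\la^\theta)$, which is itself in $\wt{\mc{O}}_Y^f$ by \corref{G-finite:filt}(i)), and being irreducible it automatically has finite composition length, hence $\wt{L}(\la^\theta)\in\wt{\mc{O}}_Y^f$.

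With this verified, \thmref{matching:KL} applied to $\wt{M}=\wt{L}(\la^\theta)$ yields, for every $n\ge 0$, the $\mf{l}_Y$-module isomorphism
\begin{equation*}
T\big(H_n(\wt{\mf{u}}_-;\wt{L}(\la^\theta))\big)\cong H_n\big(\mf{u}_-;T(\wt{L}(\la^\theta))\big),
\end{equation*}
and the analogous $\ov{\mf{l}}_Y$-module isomorphism with $\ov{T}$, $\ov{\mf{u}}_-$ in place of $T$, $\mf{u}_-$. It then remains only to identify the coefficient modules appearing on the right-hand sides. For this I would invoke \thmref{matching:modules}, which gives the two identifications $T(\wt{L}(\la^\theta))=L(\la)$ and $\ov{T}(\wt{L}(\la^\theta))=\ov{L}(\la^\natural)$. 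Substituting these into the two displayed isomorphisms yields exactly statements (i) and (ii) of the corollary.

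There is no substantive obstacle: both inputs have already been proved, and the corollary is purely a specialization combined with a substitution of module identities. The only point warranting a brief word is the verification that $\wt{L}(\la^\theta)\in\wt{\mc{O}}_Y^f$, which as noted above is immediate.
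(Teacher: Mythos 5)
Your proposal is correct and is exactly the paper's argument: the corollary is stated immediately after \thmref{matching:KL} with the one-line justification that \thmref{matching:modules} (giving $T(\wt{L}(\la^\theta))=L(\la)$ and $\ov{T}(\wt{L}(\la^\theta))=\ov{L}(\la^\natural)$) implies it, i.e.\ the specialization $\wt{M}=\wt{L}(\la^\theta)$ that you carry out. Your extra remark that $\wt{L}(\la^\theta)\in\wt{\mc{O}}_Y^f$ is a harmless (and correct) bit of bookkeeping the paper leaves implicit.
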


\subsection{Kazhdan-Lusztig polynomials}\label{KL:polynomials}
Let $\gl_\infty$ be the infinite-dimensional general linear algebra with basis consisting of elementary matrices $E_{ij}$, $i,j\in\Z$. Let  $U_q(\gl_\infty)$ be its quantum group acting on the natural module $\mathbb V$ (see \cite[\S 2-c]{B} or \cite[\S 2.1]{CW2} for precise definition).  Let $\mathbb W$ be the restricted dual of $\mathbb V$ (\cite[\S 2-d]{B}, \cite[\S 2.3]{CW2}). Let $m_1,\cdots,m_s\in\N$ with $\sum_{i=1}^sm_i=m$ and $\wt{\mf{l}}_Y^{<0}\cong\bigoplus_{i=1}^s\gl(m_i)$.  Consider a certain topological completion $\widehat{\mc{E}}^{{\bf m}|\infty}$ of the Fock space (\cite[\S 2-d]{B}, \cite[\S 2.3]{CW2})
\begin{equation*}
\mc{E}^{{\bf m}|\infty}:=\La^{m_1}(\mathbb V)\otimes\La^{m_2}(\mathbb V)\otimes\cdots\La^{m_s}(\mathbb V)\otimes \La^{\infty}(\mathbb W).
\end{equation*}
By arguments essentially going back to \cite{KL} (c.f.~\cite[Theorem 2.17]{B}) $\widehat{\mc{E}}^{{\bf m}|\infty}$ has three sets of distinguished basis, namely the standard, canonical and dual canonical basis, parameterized by $\mc{P}_Y$, denoted respectively by $\{\ov{K}_{f_{\la^\natural}}|\la\in \mc{P}_Y\}$, $\{\ov{U}_{f_{\la^\natural}}|\la\in \mc{P}_Y\}$, and $\{\ov{L}_{f_{\la^\natural}}|\la\in \mc{P}_Y\}$. Furthermore one has
\begin{align}\label{aux224}
\ov{U}_{f_{\la^\natural}}=\sum_{\mu\in \mc{P}_Y} u_{\mu^\natural\la^\natural}(q)\ov{K}_{f_{\mu^\natural}},\quad
\ov{L}_{f_{\la^\natural}}=\sum_{\mu\in \mc{P}_Y} \ell_{\mu^\natural\la^\natural}(q)\ov{K}_{f_{\mu^\natural}},
\end{align}
where $u_{\mu^\natural\la^\natural}(q)\in\Z[q]$ and $\ell_{\mu^\natural\la^\natural}(q)\in\Z[q^{-1}]$ \cite[(2.3)]{CW2}, which are parabolic versions of \cite[(2.18)]{B}.

The following theorem is an analogue of Vogan's cohomological
interpretation of the Kazhdan-Lusztig polynomials.

\begin{thm}\label{cw2:conjecture} We have for $\la,\mu\in \mc{P}_Y$
\begin{equation*}
\ell_{\mu^\natural\la^\natural}(-q^{-1})=\sum_{n=0}^\infty{\rm
dim}_\C{\rm
Hom}_{\ov{\mf{l}}_Y}\big{(}L(\ov{\mf{l}}_Y,\mu^\natural),H_n(\ov{\mf{u}}_-;\ov{L}(\la^\natural))\big{)}q^n.
\end{equation*}
\end{thm}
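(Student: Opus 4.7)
The plan is to deduce the theorem from the classical (parabolic) Vogan cohomological interpretation of Kazhdan--Lusztig polynomials on the $\G$-side, and then transport the statement to the $\SG$-side via the bridge $\wt{\G}$ established in \secref{section3} and \thmref{matching:KL}.

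First I would invoke the classical parabolic Vogan formula for $\G$ in the category $\mc{O}^f_Y$: for $\la,\mu\in\mc{P}_Y$,
\begin{equation*}
\mf{l}_{\mu\la}(-q^{-1})=\sum_{n\ge 0}\dim_\C\mathrm{Hom}_{\mf{l}_Y}\bigl(L(\mf{l}_Y,\mu),\,H_n(\mf{u}_-;L(\la))\bigr)\,q^n.
\end{equation*}
This is the standard Vogan/Casian--Collingwood identity (combined with Beilinson--Bernstein/Brylinski--Kashiwara, or with Deodhar's parabolic version \cite{D}) in its parabolic form for $\gl(m+n)$; passage to $\G$ and $\mc{O}_Y^f$ is handled by a truncation argument $\mc{O}_Y^f\to\mc{O}_Y^f(N)$ completely analogous to $\mf{tr}_n$, using that truncation is exact and commutes with $\mf{u}_-$-homology on modules of finite length. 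Combined with \cite[Theorem 4.7]{CW2}, which identifies $\mf{l}_{\mu\la}(q)=\ell_{\mu^\natural\la^\natural}(q)$, this rewrites the left-hand side of the desired formula as a sum of $\mf{l}_Y$-Hom dimensions on $H_n(\mf{u}_-;L(\la))$.

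The second step is to identify the $\mf{l}_Y$-Hom dimensions on $H_n(\mf{u}_-;L(\la))$ with the $\ov{\mf{l}}_Y$-Hom dimensions on $H_n(\ov{\mf{u}}_-;\ov{L}(\la^\natural))$, which is the content of the theorem once combined with step one. By \corref{matching:KL1} we have the two isomorphisms
\begin{equation*}
T\bigl(H_n(\wt{\mf{u}}_-;\wt{L}(\la^\theta))\bigr)\cong H_n(\mf{u}_-;L(\la)),\qquad \ov{T}\bigl(H_n(\wt{\mf{u}}_-;\wt{L}(\la^\theta))\bigr)\cong H_n(\ov{\mf{u}}_-;\ov{L}(\la^\natural)).
\end{equation*}
Because $\Lambda(\wt{\mf{u}}_-)\otimes\wt{L}(\la^\theta)$ is completely reducible as an $\wt{\mf{l}}_Y$-module with summands of the form $L(\wt{\mf{l}}_Y,\nu^\theta)$, $\nu\in\mc{P}_Y$, and because the boundary operator $\wt{d}$ is $\wt{\mf{l}}_Y$-linear, each homology group $H_n(\wt{\mf{u}}_-;\wt{L}(\la^\theta))$ is itself a direct sum $\bigoplus_{\nu\in\mc{P}_Y}L(\wt{\mf{l}}_Y,\nu^\theta)^{m_n(\nu)}$ for some multiplicities $m_n(\nu)\in\Z_+$. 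Applying \lemref{lem:aux1} to each summand gives
\begin{equation*}
H_n(\mf{u}_-;L(\la))\cong\bigoplus_{\nu\in\mc{P}_Y}L(\mf{l}_Y,\nu)^{m_n(\nu)},\qquad H_n(\ov{\mf{u}}_-;\ov{L}(\la^\natural))\cong\bigoplus_{\nu\in\mc{P}_Y}L(\ov{\mf{l}}_Y,\nu^\natural)^{m_n(\nu)},
\end{equation*}
so each Hom dimension equals the common multiplicity $m_n(\mu)$. This completes the match.

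The main obstacle I expect is justifying step one in the infinite-rank setting of $\G$: the parabolic Vogan formula is classical only for finite-dimensional reductive Lie algebras, so one must argue that it passes to the direct limit. This should be routine by choosing, for each $(\la,\mu)$, a finite $N$ large enough that all weights occurring are supported in $[-m,-1]\cup[1,N]$ and then applying the classical result inside $\gl(m+N)$; exactness of truncation and the fact that $\mf{u}_-$-homology stabilizes under truncation (since $\mf{u}_-$ itself is a direct limit of finite pieces acting through operators of bounded weight) should yield the infinite-rank identity. No new input beyond the machinery already developed in \secref{section3} and the quoted classical KL--Vogan theorem is needed.
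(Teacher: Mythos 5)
Your proposal is correct and follows essentially the same route as the paper: the classical Vogan/Kazhdan--Lusztig cohomological formula on the $\G$-side, the identification $\mf{l}_{\mu\la}(q)=\ell_{\mu^\natural\la^\natural}(q)$ from \cite[Theorem 4.7]{CW2}, and \corref{matching:KL1} (via \lemref{lem:aux1} and complete reducibility over $\wt{\mf{l}}_Y$) to transfer the Hom-space dimensions to the $\SG$-side. Your explicit truncation discussion for the infinite-rank passage and the multiplicity-matching argument merely spell out details the paper leaves implicit in its citations of \cite{V, BB, BK} and of \corref{matching:KL1}.
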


\begin{proof} Consider a topological completion $\widehat{\mc{E}}^{{\bf m}+\infty}$ of the Fock space \cite[\S 2.2]{CW2}
\begin{equation*}
\mc{E}^{{\bf m}+\infty}:=\La^{m_1}(\mathbb V)\otimes\La^{m_2}(\mathbb V)\otimes\cdots\La^{m_s}(\mathbb V)\otimes \La^{\infty}(\mathbb V).
\end{equation*}
$\widehat{\mc{E}}^{{\bf m}+\infty}$ has the standard, canonical and dual canonical basis, parameterized by $\mc{P}_Y$, denoted respectively by $\{{K}_{f_{\la}}|\la\in \mc{P}_Y\}$, $\{{U}_{f_{\la}}|\la\in \mc{P}_Y\}$, and $\{{L}_{f_{\la}}|\la\in \mc{P}_Y\}$. Similarly to \eqnref{aux224} one has
\begin{align*}
{U}_{f_{\la}}=\sum_{\mu\in \mc{P}_Y} \mf{u}_{\mu\la}(q){K}_{f_{\mu}},\quad
{L}_{f_{\la}}=\sum_{\mu\in \mc{P}_Y} \mf{l}_{\mu\la}(q){K}_{f_{\mu}},
\end{align*}
where $\mf{u}_{\mu\la}(q)\in\Z[q]$ and $\mf{l}_{\mu\la}(q)\in\Z[q^{-1}]$.  It is folklore that (c.f.~\cite[Theorems 4.15 and 4.16]{CW2})
\begin{equation}\label{aux:449}
\text{ch}{L}(\la)=\sum_{\mu\in \mc{P}_Y}
\mf{l}_{\mu\la}(1)\text{ch}{K}({\mu}).
\end{equation}
From \cite[Conjecture 3.4]{V} and the Kazhdan-Lusztig
conjecture proved in \cite{BB, BK} we conclude
\begin{equation*}
\mf{l}_{\mu\la}(-q^{-1})=\sum_{n=0}^\infty{\rm dim}_\C{\rm
Hom}_{\mf{l}_Y}\big{(}L(\mf{l}_Y,\mu),H_n(\mf{u}_-;L(\la))\big{)}q^n.
\end{equation*}
Now by \cite[Theorem 4.7]{CW2} we have
\begin{equation*}
\ell_{\mu^\natural\la^\natural}(q)=\mf{l}_{\mu\la}(q).
\end{equation*}
By \corref{matching:KL1} we have
\begin{equation*}
{\rm dim}_\C{\rm Hom}_{\mf{l}_Y}\big{(}L(\mf{l}_Y,\mu),H_n(\mf{u}_-;L(\la))\big{)} =
{\rm dim}_\C{\rm Hom}_{\ov{\mf{l}}_Y}\big{(}L(\ov{\mf{l}}_Y,\mu^\natural),H_n(\ov{\mf{u}}_-;\ov{L}(\la^\natural))\big{)},
\end{equation*}
and hence the theorem follows.
\end{proof}

\begin{rem}
Let $\ov{U}(\la^\natural)$ denote the tilting module in $\ov{\mc{O}}^f_Y$ corresponding to $\la^\natural\in {\ov{\mc{P}}_Y}$ \cite[Theorem 3.14]{CW2}. By \remref{aux:339} we have
$\text{ch}\ov{L}(\la^\natural)=\sum_{\mu\in \mc{P}_Y} \ell_{\mu^\natural\la^\natural}(1)\text{ch}\ov{K}({\mu^\natural})$. This, together with the remark following \cite[Conjecture 3.10]{CW2}, implies that
\begin{equation*}
\text{ch}\ov{U}(\la^\natural)=\sum_{\mu\in\mc{P}_Y}u_{\mu^\natural\la^\natural}(1)\text{ch}\ov{K}(\mu^\natural).
\end{equation*}
\end{rem}

\section{Super Duality}\label{section5}

\subsection{Equivalence of the categories $\mc{O}_Y^f$ and $\ov{\mc{O}}^f_Y$}

The goal of this section is to establish the following.

\begin{thm}\label{thm:equivalence} Recall $T$ and $\ov{T}$ from \secref{Tfunctors}. We have the following.
\begin{itemize}
\item[(i)] $T:\wt{\mc{O}}_Y^f\rightarrow\mc{O}_Y^f$ is an equivalence of categories.
\item[(ii)] $\ov{T}:\wt{\mc{O}}_Y^f\rightarrow\ov{\mc{O}}_Y^f$ is an equivalence of categories.
\item[(iii)] The categories $\mc{O}_Y^f$ and $\ov{\mc{O}}_Y^f$ are equivalent.
\end{itemize}
\end{thm}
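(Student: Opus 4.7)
Part (iii) follows from (i) and (ii) by composition: choosing a quasi-inverse of $T$ and composing with $\ov{T}$ yields the desired equivalence $\mc{O}_Y^f\to\ov{\mc{O}}_Y^f$. Parts (i) and (ii) will be proved by parallel arguments that differ only in using the odd-reflection data $\wt{\mf{b}}^s(n)$, with highest weight $\la^\natural$, in place of $\wt{\mf{b}}^c(n)$, with highest weight $\la$; I describe the plan for (i) only.

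The strategy is to verify that $T$ is faithful, full, and essentially surjective. Faithfulness is immediate from exactness of $T$ (\propref{functor}) combined with \thmref{matching:modules}: any non-zero $\wt{f}\colon\wt{M}\to\wt{N}$ has non-zero image containing a simple subquotient $\wt{L}(\mu^\theta)$, which $T$ sends to the non-zero simple $L(\mu)$. For fullness, I will first handle morphisms out of a parabolic Verma $\wt{K}(\la^\theta)$. By \corref{aux:cor1}, $\wt{K}(\la^\theta)$ is a $\wt{\mf{b}}^c(n)$-highest weight module of highest weight $\la$ for $n\gg 0$, generated by a weight vector $v_\la\in T(\wt{K}(\la^\theta))=K(\la)$ that also serves as the $\mf{b}$-highest weight vector of $K(\la)$. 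For any $\G$-map $f\colon K(\la)\to T(\wt{N})$, the image $w:=f(v_\la)\in T(\wt{N})\subseteq\wt{N}$ is automatically $\mf{b}$-singular, and the universal property of $\wt{K}(\la^\theta)$ viewed through $\wt{\mf{b}}^c(n)$ will produce a lift $\wt{f}\colon\wt{K}(\la^\theta)\to\wt{N}$ with $T[\wt{f}]=f$ as soon as $w$ is shown to be $\wt{\mf{b}}^c(n)$-singular in $\wt{N}$.

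This singular-vector-lifting step is the principal anticipated obstacle: the simple roots in $\wt{\mf{b}}^c(n)\setminus\mf{b}$ involve half-integer-weight coordinates, and one must exclude the possibility that the associated root vectors carry $w$ to a non-zero vector in $\wt{N}\setminus T(\wt{N})$. I plan to handle it by choosing $n$ large enough that every $\wt{\mf{l}}_Y$-composition factor $L(\wt{\mf{l}}_Y,\mu^\theta)$ appearing in $\wt{N}$ (these being available through \corref{G-finite:filt}(i)) satisfies $\ell(\mu_+)<n$, so that by a weight-counting argument patterned on the proof of \lemref{high:to:high}, no weight of the form $\la+\alpha$ (for a simple root $\alpha\in\wt{\mf{b}}^c(n)\setminus\mf{b}$) can actually appear in $\wt{N}$. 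Fullness is then extended to arbitrary $\wt{M}\in\wt{\mc{O}}_Y^f$ by the five-lemma applied to a two-term parabolic Verma presentation $\wt{K}^{(1)}\to\wt{K}^{(0)}\to\wt{M}\to 0$ (available by \corref{G-finite:filt}(i) and the finite composition length of $\wt{M}$), together with its image under $T$. Essential surjectivity follows from the analogous construction: given $M\in\mc{O}_Y^f$, I choose a parabolic Verma presentation $K^{(1)}\to K^{(0)}\to M\to 0$, lift its differential uniquely via the Hom bijection just proved, and set $\wt{M}$ to be the cokernel of this lift; exactness of $T$ and \thmref{matching:modules} then give $T(\wt{M})\cong M$, finishing (i) and hence (ii) and (iii).
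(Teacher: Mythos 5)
Your plan is sound only up to the level of morphisms out of parabolic Verma modules; the two places where you pass from there to arbitrary objects are genuinely broken. Both your five-lemma argument for fullness and your construction of $\wt{M}$ for essential surjectivity rest on the existence of a two-term presentation $\wt{K}^{(1)}\rightarrow\wt{K}^{(0)}\rightarrow\wt{M}\rightarrow 0$ (respectively $K^{(1)}\rightarrow K^{(0)}\rightarrow M\rightarrow 0$) by finite direct sums of parabolic Verma modules, and such presentations do not exist in general: \corref{G-finite:filt} only provides a finite filtration whose subquotients are \emph{quotients} of modules $\wt{K}(\mu^\theta)$, which is strictly weaker. Already in classical parabolic category $\mc{O}$ a non-simple dual (parabolic) Verma module is a finite-length object admitting no surjection from any direct sum of parabolic Vermas (every homomorphism from a parabolic Verma into it lands in the simple socle), and the analogous costandard objects live in ${\mc{O}}_Y^f$ and $\wt{\mc{O}}_Y^f$ here. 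This is exactly why the paper does not argue via presentations: it first proves that $T$ induces isomorphisms on $\mathrm{Hom}$ \emph{and} $\mathrm{Ext}^1$ out of parabolic Vermas by relative homological algebra (\propref{relative ext}, \corref{relative ext=}, \lemref{ext:uhomology}, resting on the homology comparison \corref{matching:KL1}), and then bootstraps by induction on composition length through short exact sequences with simple top, where injectivity of $T$ on $\mathrm{Ext}^1$ (\lemref{lem:aux13}, \lemref{lem:aux14}) is indispensable even for fullness, and essential surjectivity is extracted from the $\mathrm{Ext}^1$-isomorphism for simples (\lemref{thm:aux2}), not from a cokernel construction. Your proposal contains no $\mathrm{Ext}^1$ comparison at all, so fullness beyond parabolic Vermas and essential surjectivity remain unproved. (Faithfulness, and the reduction of (iii) to (i)--(ii), are fine.)

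The singular-vector-lifting step also does not work as described, although its conclusion is true. You cannot choose $n$ so that every $\wt{\mf{l}}_Y$-composition factor $L(\wt{\mf{l}}_Y,\mu^\theta)$ of $\wt{N}$ satisfies $\ell(\mu_+)<n$: any nonzero object of $\wt{\mc{O}}_Y^f$, already $\wt{K}(\la^\theta)$ itself, contains infinitely many such factors with $\ell(\mu_+)$ unbounded. The correct mechanism is different: by \eqnref{eqn:hookschur} every weight of an object of $\wt{\mc{O}}_Y$ has non-negative coefficients at all $\epsilon_r$ with $r\in\hf\N$, so for $n\ge\ell(\la_+)$ each weight $\la+\alpha$, with $\alpha$ a $\wt{\mf{b}}^c(n)$-positive root not in $\mf{b}$, violates this positivity and cannot occur in $\wt{N}$; hence $w$ is automatically $\wt{\mf{b}}^c(n)$-singular. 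Moreover, even granted this, there is no formal ``universal property'' to invoke: $\wt{K}(\la^\theta)$ is induced from $\wt{\mf{p}}_Y$, not a Verma module for $\wt{\mf{b}}^c(n)$, so to obtain $\wt{f}$ you must still verify that $\mc{U}(\wt{\mf{l}}_Y)w$ is a copy of $L(\wt{\mf{l}}_Y,\la^\theta)$ annihilated by $\wt{\mf{u}}_Y$ (using complete reducibility of $\wt{N}$ over $\wt{\mf{l}}_Y$ and uniqueness of the $\wt{\mf{b}}_Y^c(n)$-highest weight as in \lemref{lem:change}) before Frobenius reciprocity applies. With these repairs you would recover the degree-zero case of \lemref{ext:uhomology}; the degree-one statement, which the proof of the theorem cannot do without, is not addressed by your argument.
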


Since by \secref{sec:O} $\ov{\mc{O}}^{f, {\bar{0}}}_Y\equiv\ov{\mc{O}}_Y^f$ and $\wt{\mc{O}}^{f, {\bar{0}}}_Y\equiv\wt{\mc{O}}_Y^f$ it is enough to prove \thmref{thm:equivalence} for $\ov{\mc{O}}^{f, {\bar{0}}}_Y$ and $\wt{\mc{O}}^{f, {\bar{0}}}_Y$. In order to keep notation simple we will from now on drop the superscript $\bar{0}$ and use
$\ov{\mc{O}}_Y^f$ and $\wt{\mc{O}}_Y^f$ to denote the respective categories $\ov{\mc{O}}^{f, {\bar{0}}}_Y$ and $\wt{\mc{O}}^{f, {\bar{0}}}_Y$ for the remainder of the article.  Henceforth, when we write $\wt{K}(\la^\theta)\in\wt{\mc{O}}^f_Y$ and $\wt{L}(\la^\theta)\in\wt{\mc{O}}^f_Y$, $\la\in\mc{P}_Y$, we will mean the corresponding modules equipped with the $\Z_2$-gradation \eqnref{wt-Z2-gradation}.  Similar convention applies to $\ov{K}(\la^\natural)\in\ov{\mc{O}}^f_Y$ and $\ov{L}(\la^\natural)\in\ov{\mc{O}}^f_Y$.

For ${M},{N}\in{\mc{O}}_Y^f$ and $i\in \N$ the $i$th
extension ${\rm Ext}_{{\mc{O}}_Y^f}^i({M},{N})$ can be
understood in the sense of Baer-Yoneda (see e.g.~\cite[Chapter
VII]{M}) and ${\rm Ext}_{{\mc{O}}_Y^f}^0({M},{N}):={\rm
Hom}_{{\mc{O}}_Y^f}({M},{N})$. In a similar way
extensions in $\ov{\mc{O}}_Y^f$ and $\wt{\mc{O}}_Y^f$ can be
interpreted. From this view point the exact functors $T$ and $\ov{T}$
induce natural maps on extensions by taking the projection of the
corresponding exact sequences.

For $\wt{M}\in\wt{\mc{O}}_Y^f$ we let $M=T(\wt{M})$ and
$\ov{M}=\ov{T}(\wt{M})$. Since all the proofs in this section for
the functors $T$ and $\ov{T}$ are parallel, we shall only give
proofs for the functor $T$ without further explanation.

\begin{lem}\label{g-extension} Let
\begin{equation*}
 0\longrightarrow A\longrightarrow
B{\longrightarrow}C\longrightarrow 0
\end{equation*}
be an exact sequence of $\wt{\G}$-modules (respectively,
$\G$-modules, $\SG$-modules) such that $A$, $C\in \wt{\mc{O}}_Y^f$
(respectively, ${\mc{O}}_Y^f$, $\ov{\mc{O}}_Y^f$). Then $B$ also
belongs to $\wt{\mc{O}}_Y^f$ (respectively, ${\mc{O}}_Y^f$,
$\ov{\mc{O}}_Y^f$).
\end{lem}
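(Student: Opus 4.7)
The three cases---for $\wt{\mc{O}}_Y^f$, $\mc{O}_Y^f$, and $\ov{\mc{O}}_Y^f$---are entirely parallel, so I will carry out the plan for $\wt{\G}$; the arguments for $\G$ and $\SG$ are analogous (and in fact somewhat easier, since the classical Levi $\mf{l}_Y$ is reductive). The task is to verify that the middle term $B$ satisfies the three defining conditions of $\wt{\mc{O}}_Y^f$: (a) $\wt{\h}$-semisimplicity with finite-dimensional weight spaces, (b) the direct-sum decomposition over $\wt{\mf{l}}_Y$ into irreducibles $L(\wt{\mf{l}}_Y, \mu^\theta)$, and (c) the existence of a finite filtration with subquotients of the form $\wt{L}(\nu^\theta)$.

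Condition (a) is immediate from additivity of weight spaces in a short exact sequence: $B_\gamma = A_\gamma \oplus C_\gamma$ as vector spaces, whence $\dim B_\gamma = \dim A_\gamma + \dim C_\gamma < \infty$ for every $\gamma \in \wt{\h}^*$. For condition (c), I would combine the filtrations supplied by $A, C \in \wt{\mc{O}}_Y^f$. Writing $A = A_0 \supset A_1 \supset \cdots \supset A_s = 0$ and $C = C_0 \supset C_1 \supset \cdots \supset C_t = 0$ with each subquotient of the form $\wt{L}(\nu^\theta)$, and letting $\pi : B \twoheadrightarrow C$ denote the projection, I set $B_i := \pi^{-1}(C_i)$ for $0 \le i \le t$ and $B_{t+j} := A_j$ for $0 \le j \le s$. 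This produces a finite $\wt{\G}$-filtration of $B$ whose subquotients are of the required form, simultaneously yielding a finite composition series.

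The principal obstacle is condition (b): showing that $B$ is completely reducible as an $\wt{\mf{l}}_Y$-module. Restricting the short exact sequence to $\wt{\mf{l}}_Y$, both $A$ and $C$ are direct sums of $L(\wt{\mf{l}}_Y, \mu^\theta)$ by hypothesis, and the $\wt{\G}$-filtration of $B$ constructed in the preceding paragraph restricts to an $\wt{\mf{l}}_Y$-filtration whose successive quotients are themselves direct sums of such irreducibles. What remains is the assertion that any $\wt{\mf{l}}_Y$-module arising as an iterated extension of modules of the form $L(\wt{\mf{l}}_Y, \mu^\theta)$, $\mu \in \mc{P}_Y$, within the category of $\wt{\h}$-semisimple $\wt{\mf{l}}_Y$-modules with finite weight multiplicities, must itself decompose as a direct sum of such irreducibles. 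I plan to invoke here the complete reducibility theorem of Cheng--Kwon [CK, Theorem 3.2], which supplies exactly this semisimplicity for the relevant class of $\wt{\mf{l}}_Y$-modules; reassembling the isotypic pieces delivers condition (b), and hence $B \in \wt{\mc{O}}_Y^f$.
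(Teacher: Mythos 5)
Your proposal is correct and follows essentially the same route as the paper: the classical case is routine, and the only substantive point is the semisimplicity of $B$ over the Levi subalgebra, which you settle by the complete reducibility theorems of Cheng--Kwon \cite[Theorems 3.1 and 3.2]{CK} --- precisely the results the paper itself cites, with the weight-space and filtration checks being the details the paper leaves implicit.
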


\begin{proof}
The statement for $\mc{O}^f_Y$ is clear.  The statements for the categories $\ov{\mc{O}}^f_Y$ and $\wt{\mc{O}}^f_Y$ follow, for example, from \cite[Theorems 3.1 and 3.2]{CK}.
\end{proof}

For $\DG$-(respectively, $\SG$-, and $\G$-)modules  $A$ and $C$, let
${\mc Ext}^i_{(\mc{U}(\DG),\mc{U}(\wt{{\mf{l}}}_Y))}(C,A)$
(respectively,
${\mc Ext}^i_{(\mc{U}(\SG),\mc{U}(\ov{{\mf{l}}}_Y))}(C,A)$ and
${\mc Ext}^i_{(\mc{U}(\G),\mc{U}({{\mf{l}}_Y)})}(C,A)$) denote the
$i$th relative extension group of $A$ by $C$ (see e.g.~\cite[Appendix D]{Ku}).

Let $C$ be a $\wt{\mf{u}}_Y$-(respectively, $\ov{\mf{u}}_Y$-, and
${\mf{u}}_Y$-)modules. Let $H^i(\wt{\mf{u}}_Y;C)$ (respectively,
$H^i(\ov{\mf{u}}_Y;C)$ and $H^i({\mf{u}}_Y;C)$\,) denote the $i$th
$\wt{\mf{u}}_Y$-(respectively, $\ov{\mf{u}}_Y$- and
${\mf{u}}_Y$-)cohomology group with coefficients in $C$. Let
$\mc{H}^i(\wt{\mf{u}}_Y;C)$ (respectively,
$\mc{H}^i(\ov{\mf{u}}_Y;C)$ and $\mc{H}^i({\mf{u}}_Y;C)$\,) denote
the $i$th restricted (in the sense of \cite[Section 4]{L})
$\wt{\mf{u}}_Y$-(respectively, $\ov{\mf{u}}_Y$- and
${\mf{u}}_Y$-)cohomology group with coefficients in the $C$.

The following proposition is an analogue of \cite[\S7 Theorem 2]{RW}
(cf.~\cite[Lemma 9.1.8]{Ku}).

\begin{prop}\label{relative ext} Let $\la\in \mc{P}_Y$ and $\wt{N}\in \wt{\mc{O}}_Y$. For
$i\ge 0$, we have
\begin{itemize}
\item[(i)]\begin{align*}{\mc Ext}^i_{(\mc{U}(\wt{\G}),\mc{U}(\wt{{\mf{l}}}_Y))}(\wt{K}(\la^\theta),\wt{N})&\cong{\rm
Hom}_{\wt{\mf{l}}_Y}\big{(}
L(\wt{\mf{l}}_Y,\la^\theta),H^i(\wt{\mf{u}}_Y;\wt{N})\big{)}\\&\cong
{\rm Hom}_{\wt{\mf{l}}_Y}\big{(}
L(\wt{\mf{l}}_Y,\la^\theta),\mc{H}^i(\wt{\mf{u}}_Y;\wt{N})\big{)}.\end{align*}
\item[(ii)]\begin{align*}{\mc Ext}^i_{(\mc{U}(\SG),\mc{U}(\ov{{\mf{l}}}_Y))}(\ov{K}(\la^\natural),\ov{N})&\cong{\rm
Hom}_{\ov{\mf{l}}_Y}\big{(}
L(\ov{\mf{l}}_Y,\la^\natural),H^i(\ov{\mf{u}}_Y;\ov{N})\big{)}\\&\cong
{\rm Hom}_{\ov{\mf{l}}_Y}\big{(}
L(\ov{\mf{l}}_Y,\la^\natural),\mc{H}^i(\ov{\mf{u}}_Y;\ov{N})\big{)}.\end{align*}
\item[(iii)] \begin{align*}
{\mc Ext}^i_{(\mc{U}({\G}),\mc{U}({{\mf{l}}}_Y))}(K(\la),{N}) &\cong
{\rm Hom}_{{\mf{l}}_Y}\big{(}
L({\mf{l}}_Y,\la),H^i({\mf{u}}_Y;{N})\big{)}\\&\cong {\rm
Hom}_{{\mf{l}}_Y}\big{(}
L({\mf{l}}_Y,\la),\mc{H}^i({\mf{u}}_Y;{N})\big{)}.
\end{align*}
\end{itemize}\end{prop}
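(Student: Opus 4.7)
My plan is to treat the three parts uniformly, since $(\wt{\G}, \wt{\mf{l}}_Y)$, $(\SG, \ov{\mf{l}}_Y)$ and $(\G, \mf{l}_Y)$ are structurally parallel pairs. Concretely, I would prove (i) in detail and assert that (ii) and (iii) follow by identical arguments (the proof of (iii) is the classical result the other two are modelled on). The first step is to apply Frobenius/Shapiro reciprocity in the relative setting: since $\wt{K}(\la^\theta) = \mc{U}(\wt{\G}) \otimes_{\mc{U}(\wt{\mf p}_Y)} L(\wt{\mf{l}}_Y, \la^\theta)$ and $\mc{U}(\wt{\G})$ is free as a right $\mc{U}(\wt{\mf{p}}_Y)$-module (hence $(\mc{U}(\wt{\G}), \mc{U}(\wt{\mf{p}}_Y))$-projective), one obtains
\begin{equation*}
\mc{E}xt^i_{(\mc{U}(\wt{\G}),\mc{U}(\wt{\mf{l}}_Y))}\bigl(\wt{K}(\la^\theta),\wt N\bigr) \;\cong\; \mc{E}xt^i_{(\mc{U}(\wt{\mf{p}}_Y),\mc{U}(\wt{\mf{l}}_Y))}\bigl(L(\wt{\mf{l}}_Y,\la^\theta),\wt N\bigr),
\end{equation*}
so the problem is localised on the parabolic $\wt{\mf{p}}_Y = \wt{\mf{l}}_Y \ltimes \wt{\mf{u}}_Y$. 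This is the same reduction used in \cite{RW,Ku}.

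Next, I would identify the relative Ext on $\wt{\mf p}_Y$ with $\wt{\mf{l}}_Y$-Hom into Lie superalgebra cohomology. Compute the derived functors by the standard Koszul resolution $\La^\bullet(\wt{\mf p}_Y/\wt{\mf{l}}_Y)^\ast \otimes \wt N \cong \La^\bullet(\wt{\mf u}_Y)^\ast \otimes \wt N$: taking $\wt{\mf{l}}_Y$-invariant Hom from $L(\wt{\mf{l}}_Y,\la^\theta)$ produces exactly $\mr{Hom}_{\wt{\mf{l}}_Y}\!\bigl(L(\wt{\mf{l}}_Y,\la^\theta), \La^\bullet(\wt{\mf u}_Y)^\ast \otimes \wt N\bigr)$. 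The crucial point is that the action of $\wt{\mf{l}}_Y$ on the Koszul complex, and on $\wt N$ viewed as an $\wt{\mf{l}}_Y$-module, is semisimple (by condition (i) in the definition of $\wt{\mc{O}}_Y$ together with the fact that $\La^\bullet(\wt{\mf u}_Y)^\ast$ is a semisimple $\wt{\mf{l}}_Y$-module; this is precisely the analogue of \cite[Theorem 3.2]{CK} invoked after Lemma \ref{lem:aux2}). Consequently $\mr{Hom}_{\wt{\mf{l}}_Y}\!\bigl(L(\wt{\mf{l}}_Y,\la^\theta), -\bigr)$ is exact on this complex, so it commutes with taking cohomology, yielding the first claimed isomorphism with $H^i(\wt{\mf u}_Y;\wt N)$.

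For the second isomorphism with the restricted cohomology $\mc{H}^i(\wt{\mf u}_Y;\wt N)$, I would argue that restricting to the appropriate subcomplex changes none of the $L(\wt{\mf{l}}_Y,\la^\theta)$-isotypic $\wt{\mf{l}}_Y$-morphisms. Since $\wt N$ is $\wt{\h}$-semisimple with finite-dimensional weight spaces and lies in $\wt{\mc{O}}_Y$, every $\wt{\mf{l}}_Y$-homomorphism from $L(\wt{\mf{l}}_Y,\la^\theta)$ into the full cochain complex necessarily lands in the restricted subcomplex in the sense of \cite[\S 4]{L} — this is the standard argument that the generalised weight-space truncation is invisible to Hom out of a fixed irreducible highest-weight $\wt{\mf{l}}_Y$-module. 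Hence the $L(\wt{\mf{l}}_Y,\la^\theta)$-isotypic parts of $H^i$ and $\mc{H}^i$ coincide. Parts (ii) and (iii) are proved by the same three-step argument, using the semisimplicity of the Levi-action on $\La^\bullet(\ov{\mf u}_Y)^\ast$ and $\La^\bullet(\mf u_Y)^\ast$ respectively (\cite[Theorem 3.1]{CK} for the super case). The main technical obstacle is the verification that the Koszul resolution is projective in the \emph{relative} category and that the semisimplicity of $\wt{\mf{l}}_Y$ on the complex really lets one interchange Hom with cohomology; once this is set up, the identification with restricted cohomology is a bookkeeping argument on weights.
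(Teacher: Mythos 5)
Your architecture is the same as the paper's: a Garland--Lepowsky-type relative Koszul resolution attached to $\wt{\mf{u}}_Y\cong\wt{\mf{p}}_Y/\wt{\mf{l}}_Y$ (cf.~\cite{GL}), Frobenius reciprocity, and a weight comparison with the restricted cohomology of \cite{L}; your explicit relative Shapiro reduction to $(\wt{\mf{p}}_Y,\wt{\mf{l}}_Y)$ is just a repackaging of what the paper does concretely, namely inducing the Koszul resolution up to $\wt{\G}$, checking via \cite[Lemma 3.1.7]{Ku} that the terms are $(\mc{U}(\wt{\G}),\mc{U}(\wt{\mf{l}}_Y))$-projective, and checking that the resolution is $\mc{U}(\wt{\mf{l}}_Y)$-split (the point you flag but do not verify; it follows because $\wt{C}_k\cong\mc{U}(\wt{\mf{u}}_Y)\otimes\La^k\wt{\mf{u}}_Y$ is a completely reducible $\wt{\mf{l}}_Y$-module, so the images of the differentials are $\wt{\mf{l}}_Y$-direct summands). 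Parts (ii) and (iii) do indeed follow by the identical scheme.

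There is, however, one step whose justification fails as written. You interchange ${\rm Hom}_{\wt{\mf{l}}_Y}\big{(}L(\wt{\mf{l}}_Y,\la^\theta),-\big{)}$ with the cohomology of the full cochain complex ${\rm Hom}_{\C}(\La^\bullet\wt{\mf{u}}_Y,\wt{N})$ on the grounds that $\La^\bullet(\wt{\mf{u}}_Y)^\ast\otimes\wt{N}$ is a semisimple $\wt{\mf{l}}_Y$-module. But $\wt{\mf{u}}_Y$ is infinite dimensional (it contains all $E_{ir}$ with $i\in[-m,-1]$, $r\in\hf\N$), so the full dual $\La^k(\wt{\mf{u}}_Y)^\ast$ is a direct product, is not even $\wt{\h}$-semisimple, and the complete reducibility results of \cite{CK} do not apply to it; they apply only to direct sums of $L(\wt{\mf{l}}_Y,\mu^\theta)$ with weights in $\wt{\Gamma}$. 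The repair is to run your two isomorphisms in the opposite order, using the observation you make only for the second one: since $L(\wt{\mf{l}}_Y,\la^\theta)$ is an irreducible weight module with all weights in $\wt{\Gamma}$, every $\wt{\mf{l}}_Y$-homomorphism from it into the full cochain complex lands in the restricted ($\wt{\h}$-semisimple, $\wt{\Gamma}$-weight) subcomplex; that subcomplex is completely reducible over $\wt{\mf{l}}_Y$, so ${\rm Hom}_{\wt{\mf{l}}_Y}\big{(}L(\wt{\mf{l}}_Y,\la^\theta),-\big{)}$ commutes with its cohomology, yielding first the identification with ${\rm Hom}_{\wt{\mf{l}}_Y}\big{(}L(\wt{\mf{l}}_Y,\la^\theta),\mc{H}^i(\wt{\mf{u}}_Y;\wt{N})\big{)}$; the isomorphism with $H^i(\wt{\mf{u}}_Y;\wt{N})$ then follows because $\mc{H}^i(\wt{\mf{u}}_Y;\wt{N})$ is exactly the $\wt{\h}$-semisimple submodule of $H^i(\wt{\mf{u}}_Y;\wt{N})$ with weights in $\wt{\Gamma}$, which is all that a homomorphism from $L(\wt{\mf{l}}_Y,\la^\theta)$ can see. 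With that reordering and with the splitness of the induced resolution verified, your argument coincides with the paper's.
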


\begin{proof} We have the following relative version of Koszul resolution for the trivial module $\wt{\mf {p}}_Y$-modules (see e.g.~\cite[\S 1]{GL}):
\begin{equation}\label{Koszul}
\cdots\stackrel{\wt{\partial}_{k+1}}{\longrightarrow}\wt{C}_k\stackrel{\wt{\partial}_k}
{\longrightarrow}\wt{C}_{k-1}\stackrel{\wt{\partial}_{k-1}}{\longrightarrow}\cdots
\stackrel{\wt{\partial}_1}{\longrightarrow}\wt{C}_0\stackrel{\wt
{\epsilon}}{\longrightarrow} \C\longrightarrow 0,
\end{equation}
where $\wt{C}_k:=\mc{U}(\wt{\mf
{p}}_Y)\otimes_{\mc{U}(\wt{\mf{l}}_Y)}\Lambda^k(\wt{\mf{p}}_Y/\wt{\mf
{l}}_Y)$ is a $\wt{\mf{p}}_Y$-module with $\wt{\mf{p}}_Y$ acting on
the left of first factor for $k\ge 0$ and $\wt{\epsilon}$ is the
augmentation map from $\mc{U}(\wt{\mf{p}}_Y)$ to $\C$. The
$\wt{\mf{p}}_Y$-homomorphism $\wt{\partial}_k$ is given by
\begin{align}\label{Koszul-partial}
&\wt{\partial}_k(a\otimes \ov{x}_1  \ov{x}_2\cdots \ov{x}_k)\\
:=&\sum_{1\le s<t\le
k}(-1)^{s+t+|x_s|\sum_{i=1}^{s-1}|x_i|+|x_t|\sum_{j=1}^{t-1}|x_j|+|x_s||x_t|}
a\otimes\ov{[x_s,x_t]}\ov{x}_1\cdots\widehat{\ov{x}}_s\cdots\widehat{\ov{x}}_t\cdots \ov{x}_k\nonumber\\
&+\sum_{s=1}^k(-1)^{s+1+|x_s|\sum_{i=1}^{s-1}|x_i|}ax_s\otimes
\ov{x}_1\cdots\widehat{\ov{x}}_s\cdots \ov{x}_k.\nonumber
\end{align}
Here $a\in \mc{U}(\wt{\mf {p}}_Y)$ and the $x_i$s are homogeneous
elements in $\wt{\mf {p}}_Y$ and $\ov{x}_i$ denotes $x_i+\wt{\mf
{l}}_Y$ in $\wt{\mf{p}}_Y/\wt{\mf{l}}_Y$. As usual, $\widehat{y}$ indicates that the term $y$ is
omitted. Since $\wt{C}_k\cong
 \mc{U}(\wt{\mf
{u}}_Y)\otimes\Lambda^k\wt{\mf{u}}_Y$ as $\wt{\mf {l}}_Y$-module,
$\wt{C}_k$ is completely reducible as $\wt{\mf{l}}_Y$-module,
and hence the image of $\wt{\partial}_k$ is a direct summand of
$\wt{C}_{k-1}$.

Let $L(\wt{\mf{l}}_Y,\la^\theta)$ also denote the irreducible
$\wt{\mf{p}}_Y$-module on which $\wt{\mf{u}}_Y$ acts trivially. For $\la\in\mc{P}_Y$ and $k\geq 0$,
$\wt{D}_k:=\wt{C}_k\otimes L(\wt{\mf{l}}_Y,\la^\theta)$ is a
$\wt{\mf{p}}_Y$-module. Tensoring \eqnref{Koszul} with
$L(\wt{\mf{l}}_Y,\la^\theta)$ we obtain an exact sequence of
$\wt{\mf {p}}_Y$-modules
\begin{equation}\label{p:resolution}
\cdots\stackrel{\wt{d}_{k+1}}{\longrightarrow}\wt{D}_k\stackrel{\wt{d}_k}
{\longrightarrow}\wt{D}_{k-1}\stackrel{\wt{d}_{k-1}}{\longrightarrow}\cdots
\stackrel{\wt{d}_1}{\longrightarrow}\wt{D}_0\stackrel{\wt{d}_0}{\longrightarrow}
L(\wt{\mf{l}}_Y,\la^\theta)\longrightarrow 0,
\end{equation}
where $\wt{d}_k:=\wt{\partial}_k\otimes 1$ for $k>0$ and
$\wt{d}_0:=\wt{\epsilon}\otimes 1$.

 For $k\ge 0$, let
$\wt{E}_k:=\mc{U}(\DG)\otimes_{\mc{U}(\wt{\mf{p}}_Y)}\wt{D}_k$.
Tensoring \eqnref{p:resolution} with
$\mc{U}(\DG)\otimes_{\mc{U}(\wt{\mf{p}}_Y)}$ we obtain an exact
sequence of $\DG$-modules
\begin{equation}\label{projective-resolution}
\cdots\stackrel{\wt{\rho}_{k+1}}{\longrightarrow}\wt{E}_k\stackrel{\wt{\rho}_k}
{\longrightarrow}\wt{E}_{k-1}\stackrel{\wt{\rho}_{k-1}}{\longrightarrow}\cdots
\stackrel{\wt{\rho}_1}{\longrightarrow}\wt{E}_0\stackrel{\wt{\rho}_0}{\longrightarrow}
\wt{K}(\la^\theta)\longrightarrow 0,
\end{equation}
where $\wt{\rho}_k:=1\otimes\wt{d}_k$ for $k\ge 0$. We observe that
\begin{align*}
 \wt{E}_k&=\mc{U}(\DG)\otimes_{\mc{U}(\wt{\mf{p}}_Y)} \big{(}\wt{C}_k\otimes
L_0(\wt{\mf{l}}_Y,\la^\theta)\big{)}\\&=\mc{U}(\DG)\otimes_{\mc{U}(\wt{\mf{p}}_Y)}
\big{(}\mc{U}(\wt{\mf
{p}}_Y)\otimes_{\mc{U}(\wt{\mf{l}}_Y)}\big{(}\Lambda^k(\wt{\mf{p}}_Y/\wt{\mf
{l}}_Y)\otimes L(\wt{\mf{l}}_Y,\la^\theta)\big{)}\big{)}\\&\cong
\mc{U}(\DG)\otimes_{\mc{U}(\wt{\mf{l}}_Y)}\big{(}\Lambda^k(\wt{\mf{p}}_Y/\wt{\mf
{l}}_Y)\otimes L(\wt{\mf{l}}_Y,\la^\theta)\big{)}.
\end{align*}
By \cite[Lemma 3.1.7]{Ku} the $\wt{E}_k$s are
$(\mc{U}(\DG),\mc{U}(\wt{\mf{l}}_Y))$-projective modules. Since the image of
$\wt{\partial}_k$ in \eqnref{Koszul} is a $\mc{U}(\wt{\mf{l}}_Y)$-direct summand of
$\wt{C}_{k-1}$, the image of $\wt{\rho}_k$ in \eqnref{projective-resolution} is also a
$\mc{U}(\wt{\mf{l}}_Y)$-direct summand of $\wt{E}_{k-1}$, for $k\ge 1$, and hence
\eqnref{projective-resolution} is a $((\mc{U}(\DG),\mc{U}(\wt{\mf{l}}_Y))$-projective
resolution of $\wt{K}(\la^\theta)$. It follows therefore that the relative extension
group ${\mc
Ext}^i_{(\mc{U}(\wt{\G}),\mc{U}(\wt{{\mf{l}}}_Y))}(\wt{K}(\la^\theta),\wt{N})$ equals the
$i$th cohomology group of the following complex:
\begin{equation}\label{relative-Ext}
0\longrightarrow {\rm
Hom}_{\mc{U}(\wt{\G})}(\wt{E}_0,\wt{N})\stackrel{\wt{\rho}^*_1}
{\longrightarrow} {\rm
Hom}_{\mc{U}(\wt{\G})}(\wt{E}_1,\wt{N})\stackrel{\wt{\rho}^*_2}
{\longrightarrow} {\rm
Hom}_{\mc{U}(\wt{\G})}(\wt{E}_2,\wt{N})\stackrel{\wt{\rho}^*_3}
{\longrightarrow}\cdots
\end{equation}
Since
\[{\rm Hom}_{\mc{U}(\wt{\G})}(\wt{E}_i,\wt{N})\cong {\rm
Hom}_{\mc{U}(\wt{\mf{l}}_Y)}\big{(}L(\wt{\mf{l}}_Y,\la^\theta),{\rm
Hom}_{\C}(\Lambda^i\wt{\mf{u}}_Y,\wt{N})\big{)},\] for $i\ge 0$, the $i$th cohomology group of the \eqnref{relative-Ext} equals
${\rm Hom}_{\wt{\mf{l}}_Y}\big{(}
L(\wt{\mf{l}}_Y,\la^\theta),H^i(\wt{\mf{u}}_Y;\wt{N})\big{)}$ and
hence for each $i\ge 0$,
\begin{align*}{\mc Ext}^i_{(\mc{U}(\wt{\G}),\mc{U}(\wt{{\mf{l}}}_Y))}(\wt{K}(\la^\theta),\wt{N})&\cong{\rm
Hom}_{\wt{\mf{l}}_Y}\big{(}
L(\wt{\mf{l}}_Y,\la^\theta),H^i(\wt{\mf{u}}_Y;\wt{N})\big{)}.\end{align*}

Since the $\wt{\h}$-semisimple submodule of
$H^i(\wt{\mf{u}}_Y;\wt{N})$ with weights in $\wt{\Gamma}$
equals $\mc{H}^i(\wt{\mf{u}}_Y;\wt{N})$ we have
\begin{align*}
 {\rm Hom}_{\wt{\mf{l}}_Y}\big{(}
L(\wt{\mf{l}}_Y,\la^\theta),H^i(\wt{\mf{u}}_Y;\wt{N})\big{)} \cong
{\rm Hom}_{\wt{\mf{l}}_Y}\big{(}
L(\wt{\mf{l}}_Y,\la^\theta),\mc{H}^i(\wt{\mf{u}}_Y;\wt{N})\big{)}.
\end{align*}
This completes the proof of part (i). The proofs of (ii) and (iii)
are analogous.
\end{proof}

\begin{cor}\label{relative ext=}Let $\la\in \mc{P}_Y$ and $\wt{N}\in \wt{\mc{O}}_Y$. For
$i\ge 0$, we have
\[{\mc Ext}^i_{(\mc{U}(\wt{\G}),\mc{U}(\wt{{\mf{l}}}_Y))}(\wt{K}(\la^\theta),\wt{N})\cong
{\mc Ext}^i_{(\mc{U}(\SG),\mc{U}(\ov{{\mf{l}}}_Y))}(\ov{K}(\la^\natural),\ov{N})\cong
{\mc Ext}^i_{(\mc{U}({\G}),\mc{U}({{\mf{l}}}_Y))}(K(\la),{N}).\]
\end{cor}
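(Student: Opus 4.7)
The plan is to deduce all three isomorphisms from Proposition \ref{relative ext} together with a cohomological counterpart of Theorem \ref{matching:KL}. By Proposition \ref{relative ext}, the three relative Ext groups are identified with
\[
{\rm Hom}_{\wt{\mf{l}}_Y}\bigl(L(\wt{\mf{l}}_Y,\la^\theta),\mc{H}^i(\wt{\mf{u}}_Y;\wt{N})\bigr),\ \ {\rm Hom}_{\ov{\mf{l}}_Y}\bigl(L(\ov{\mf{l}}_Y,\la^\natural),\mc{H}^i(\ov{\mf{u}}_Y;\ov{N})\bigr),\ \ {\rm Hom}_{\mf{l}_Y}\bigl(L(\mf{l}_Y,\la),\mc{H}^i(\mf{u}_Y;N)\bigr),
\]
so it suffices to establish, as $\mf{l}_Y$- and $\ov{\mf{l}}_Y$-modules respectively,
\[
T\bigl(\mc{H}^i(\wt{\mf{u}}_Y;\wt{N})\bigr)\cong\mc{H}^i(\mf{u}_Y;N),\qquad \ov{T}\bigl(\mc{H}^i(\wt{\mf{u}}_Y;\wt{N})\bigr)\cong\mc{H}^i(\ov{\mf{u}}_Y;\ov{N}).
\]
Granting this, the corollary is immediate: by Lemma \ref{lem:aux1}, if the first module decomposes as $\bigoplus_{\mu\in\mc{P}_Y}L(\wt{\mf{l}}_Y,\mu^\theta)^{\oplus m_\mu}$, then the second and third decompose as $\bigoplus L(\mf{l}_Y,\mu)^{\oplus m_\mu}$ and $\bigoplus L(\ov{\mf{l}}_Y,\mu^\natural)^{\oplus m_\mu}$ with the same multiplicities $m_\mu$, so all three ${\rm Hom}$-spaces share the common dimension $m_\la$.

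To establish the cohomological match I would parallel the proof of Theorem \ref{matching:KL}, but for the cochain complex rather than the chain complex. The restricted cohomology $\mc{H}^i(\wt{\mf{u}}_Y;\wt{N})$ is the $i$th cohomology of the $\wt{\Gamma}$-weight portion of ${\rm Hom}_\C(\La^\bullet\wt{\mf{u}}_Y,\wt{N})$. Because $\wt{\mf{u}}_Y$ decomposes into finite-dimensional $\wt{\h}$-weight spaces that pair non-degenerately with those of $\wt{\mf{u}}_-$, the restricted graded dual of $\wt{\mf{u}}_Y$ is $\wt{\mf{l}}_Y$-equivariantly isomorphic to $\wt{\mf{u}}_-$. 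This identifies the restricted cochain complex with $\La(\wt{\mf{u}}_-)\otimes\wt{N}$ carrying a Chevalley--Eilenberg coboundary. The crucial point, analogous to Lemma \ref{boundary}, is that this coboundary is local in $\wt{\h}$-weights and hence commutes with the weight-projection functors $T$ and $\ov{T}$, restricting on the $\Gamma$- and $\ov{\Gamma}$-weight parts to the coboundaries computing $\mc{H}^i(\mf{u}_Y;N)$ and $\mc{H}^i(\ov{\mf{u}}_Y;\ov{N})$. Since $T$ and $\ov{T}$ are exact, they commute with passage to cohomology and the desired isomorphisms follow.

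The main obstacle is the cochain-level bookkeeping: one must verify, in the Lie-superalgebra setting with its sign conventions, that the pairing between $\wt{\mf{u}}_Y$ and $\wt{\mf{u}}_-$ really does produce an $\wt{\mf{l}}_Y$-equivariant identification of the restricted cochain complex with $\La(\wt{\mf{u}}_-)\otimes\wt{N}$, and that the resulting coboundary matches the boundary $\wt{d}$ of Lemma \ref{boundary} up to the expected reversal of direction. Once this compatibility is in place, the argument runs in exact parallel with Theorem \ref{matching:KL}, and the corollary drops out by combining the cohomological identification with Proposition \ref{relative ext} and Lemma \ref{lem:aux1}.
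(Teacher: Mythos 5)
Your proposal follows essentially the same route as the paper: the paper also reduces the corollary to \propref{relative ext} after showing, by adapting the arguments of \thmref{matching:KL} to the restricted cochain setting, that $T\bigl(\mc{H}^i(\wt{\mf{u}}_Y;\wt{N})\bigr)\cong\mc{H}^i({\mf{u}}_Y;{N})$ and $\ov{T}\bigl(\mc{H}^i(\wt{\mf{u}}_Y;\wt{N})\bigr)\cong\mc{H}^i(\ov{\mf{u}}_Y;\ov{N})$, whence the relevant $\rm Hom$-spaces are identified. You merely make explicit the cochain-level bookkeeping (duality between $\wt{\mf{u}}_Y$ and $\wt{\mf{u}}_-$, compatibility of the coboundary with the weight projections) that the paper leaves implicit, so the proposal is correct and in agreement with the paper's proof.
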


\begin{proof}  Using the arguments of the proof of
\thmref{matching:KL}, we can show that
\[T\big{(}\mc{H}^i(\wt{\mf{u}}_Y;\wt{N})\big{)}\cong\mc{H}^i({\mf{u}}_Y;{N}),\]
and hence $T$ induces an isomorphism ($\forall\la\in \mc{P}_Y,
\forall i\in\Z_+$)
\begin{align}\label{T:cohomology}
T:{\rm Hom}_{\wt{\mf{l}}_Y}\big{(}
L(\wt{\mf{l}}_Y,\la^\theta),\mc{H}^i(\wt{\mf{u}}_Y;\wt{N})\big{)}
\stackrel{\cong}{\longrightarrow} {\rm Hom}_{{\mf{l}}_Y}\big{(}
L({\mf{l}}_Y,\la),\mc{H}^i({\mf{u}}_Y;{N})\big{)}.
\end{align}
By \propref{relative ext}, we have
\[{\mc Ext}^i_{(\mc{U}(\wt{\G}),\mc{U}(\wt{{\mf{l}}}_Y))}(\wt{K}(\la^\theta),\wt{N})\cong
{\mc Ext}^i_{(\mc{U}({\G}),\mc{U}({{\mf{l}}}_Y))}(K(\la),{N}).\]
Similarly, we have
\[{\mc Ext}^i_{(\mc{U}(\wt{\G}),\mc{U}(\wt{{\mf{l}}}_Y))}(\wt{K}(\la^\theta),\wt{N})\cong
{\mc Ext}^i_{(\mc{U}(\SG),\mc{U}(\ov{{\mf{l}}}_Y))}(\ov{K}(\la^\natural),\ov{N}).\]
\end{proof}

\begin{lem}\label{ext:uhomology}
Let $\la\in \mc{P}_Y$ and $\wt{N}\in \wt{\mc{O}}_Y$. For
$i=0,1$, we have
\begin{itemize}
\item[(i)] $T: {\rm
Ext}^i_{\wt{\mc{O}}_Y}(\wt{K}(\la^\theta),\wt{N}) \rightarrow {\rm
Ext}^i_{{\mc{O}}_Y}({K}(\la),{N})$ is an isomorphism,
\item[(ii)] $\ov{T}: {\rm
Ext}^i_{\wt{\mc{O}}_Y}(\wt{K}(\la^\theta),\wt{N})\rightarrow {\rm
Ext}^i_{{\ov{\mc{O}}_Y}}(\ov{K}(\la^\natural),\ov{N})$ is an
isomorphism.
\end{itemize}
\end{lem}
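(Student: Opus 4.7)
The plan is to deduce this lemma from \corref{relative ext=} by identifying the Yoneda $\mathrm{Ext}^i$ in each of $\wt{\mc{O}}_Y$, $\mc{O}_Y$, $\ov{\mc{O}}_Y$ with the corresponding relative $\mathrm{Ext}^i$ from \propref{relative ext}, for $i=0,1$. The case $i=0$ is immediate: in each category $\mathrm{Ext}^0$ equals $\mathrm{Hom}$, which coincides with the relative $\mathrm{Hom}$, and the induced maps of $T$ and $\ov T$ are the restriction maps of \eqnref{T}.

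For $i=1$, the key observation is that any short exact sequence $0\to\wt N\to\wt E\to\wt K(\la^\theta)\to 0$ in $\wt{\mc{O}}_Y$ has $\wt E\in\wt{\mc{O}}_Y$ by \lemref{g-extension}, hence by axiom (i) of the definition of the category in \secref{sec:O} is completely reducible as an $\wt{\mf{l}}_Y$-module. The sequence therefore automatically splits over $\mc{U}(\wt{\mf{l}}_Y)$, which is precisely the defining condition for a relative extension. Conversely, any class in the relative $\mathrm{Ext}^1$ is represented by an exact sequence of $\DG$-modules that is $\mc{U}(\wt{\mf{l}}_Y)$-split, and such a sequence again belongs to $\wt{\mc{O}}_Y$ by \lemref{g-extension}. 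Since the Baer sums and the equivalence relations on extensions agree on both sides, this yields a natural isomorphism
\[
{\rm Ext}^1_{\wt{\mc{O}}_Y}(\wt K(\la^\theta),\wt N)\;\cong\;{\mc Ext}^1_{(\mc{U}(\wt{\G}),\mc{U}(\wt{\mf{l}}_Y))}(\wt K(\la^\theta),\wt N),
\]
and the analogous identifications for $\mc{O}_Y$ and $\ov{\mc{O}}_Y$. Combining these with \corref{relative ext=} gives the desired isomorphism of groups in the lemma.

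It remains to verify that the abstract isomorphism just obtained is the one induced by the functors $T$ and $\ov T$. At the Yoneda level, $T$ sends the extension $0\to\wt N\to\wt E\to\wt K(\la^\theta)\to 0$ to the sequence $0\to N\to T(\wt E)\to K(\la)\to 0$, which is exact by \propref{functor} and belongs to $\mc{O}_Y$ with the stated endpoints by \thmref{matching:modules}. The main technical obstacle will be tracing the isomorphism of \corref{relative ext=}, which was constructed via the $(\mc{U}(\wt{\G}),\mc{U}(\wt{\mf{l}}_Y))$-projective resolution \eqnref{projective-resolution} and the cohomological interpretation of \propref{relative ext}, back to this Yoneda-level $T$-action. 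However, since on both sides the transfer is ultimately effected by applying the exact functor $T$ (respectively $\ov T$) to representatives, and since the intermediate cohomological isomorphisms are themselves natural in the coefficient module $\wt N$, this compatibility is essentially formal and reduces to the functoriality of relative derived functors under the restriction $T$.
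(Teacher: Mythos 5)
Your proposal is correct and follows essentially the same route as the paper: identify ${\rm Ext}^i$ ($i=0,1$) in each category with the relative extension groups of \propref{relative ext} (using that objects are completely reducible over the Levi, so category extensions are exactly the $\wt{\mf{l}}_Y$-split ones), then invoke \corref{relative ext=} and naturality to see that the resulting isomorphism is the map induced by $T$, respectively $\ov{T}$. The only cosmetic difference is that the paper cites Hochschild for the identification of the relative ${\mc Ext}^1$ with $\wt{\mf{l}}_Y$-trivial extensions, while you argue it directly via \lemref{g-extension} (which, strictly speaking, is stated for the finite-length categories, but the same closure-under-extensions argument applies to $\wt{\mc{O}}_Y$).
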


\begin{proof}
It is well known that
${\mc Ext}^1_{(\mc{U}(\wt{\G}),\mc{U}(\wt{{\mf{l}}}_Y))}(\wt{K}(\la^\theta),\wt{N})$
is isomorphic  to the equivalence classes of
$\wt{{\mf{l}}}_Y$-trivial extensions of $\wt{N}$ by
$\wt{K}(\la^\theta)$ \cite[\S2]{Ho} and
\[{\mc Ext}^0_{(\mc{U}(\wt{\G}),\mc{U}(\wt{{\mf{l}}}_Y))}(\wt{K}(\la^\theta),\wt{N})={\rm
Hom}_{\wt{\G}}(\wt{K}(\la^\theta),\wt{N}).\] Hence we have, for
$i=0,1$,
\[{\rm Ext}^i_{\wt{\mc{O}}_Y}(\wt{K}(\la^\theta),\wt{N})\cong
{\mc Ext}^i_{(\mc{U}(\wt{\G}),\mc{U}(\wt{{\mf{l}}}_Y))}(\wt{K}(\la^\theta),\wt{N}).\]
Similarly, for $i=0,1$, \[{\rm
Ext}^i_{{\mc{O}}_Y}(K(\la),{N})\cong{\mc Ext}^i_{(\mc{U}({\G}),\mc{U}({{\mf{l}}}_Y))}(K(\la),{N})\]
By \corref{relative ext=}, we have, for $i=0,1$,
\begin{equation}\label{Ext:isom}
 {\rm
Ext}^i_{\wt{\mc{O}}_Y}(\wt{K}(\la^\theta),\wt{N}) \cong {\rm
Ext}^i_{{\mc{O}}_Y}(K(\la),{N}).
\end{equation}
Since all the isomorphisms involved are natural, it is not hard
to see the isomorphism \eqnref{Ext:isom} is indeed induced by $T$.
This completes the proof of (i). Part (ii) is analogous.
\end{proof}

\begin{lem} Let $\wt{N}\in \wt{\mc{O}}_Y^f$ and
\begin{equation}\label{short:exact}
0\longrightarrow\wt{M}'\stackrel{\wt{i}}{\longrightarrow}\wt{M}{\longrightarrow}\wt{M}''\longrightarrow
0
\end{equation}
be an exact sequence of $\DG$-modules in $\wt{\mc{O}}_Y^f$. Then
\begin{itemize}
\item[(i)] The \eqnref{short:exact} induces the following
commutative diagram with exact rows. (We will use subscripts to distinguish various maps induced by $T$.)
\begin{eqnarray*}
\CD
 0  @>>> {\rm
Hom}_{\wt{\mc{O}}_Y^f}\big{(}\wt{M}'',\wt{N}\big{)} @>>>{\rm Hom}_{\wt{\mc{O}}_Y^f}\big{(}\wt{M},\wt{N}\big{)} @>>> {\rm Hom}_{\wt{\mc{O}}_Y^f}\big{(}\wt{M}',\wt{N}\big{)} @>\wt{\partial}>> \\
 @. @VVT_{\wt{M}'',\wt{N}}V @VVT_{\wt{M},\wt{N}}V @VVT_{\wt{M}',\wt{N}}V \\
 0  @>>> {\rm
Hom}_{{\mc{O}}_Y^f}\big{(}M'',{N}\big{)} @>>> {\rm
Hom}_{{\mc{O}}_Y^f}\big{(}M,{N}\big{)} @>>> {\rm
Hom}_{{\mc{O}}_Y^f}\big{(}M',{N}\big{)} @>\partial>>  \\
 \endCD
\end{eqnarray*}
\begin{eqnarray*}
\hskip 2cm\CD
@>\wt{\partial}>> {\rm Ext}^1_{\wt{\mc{O}}_Y^f}
\big{(}\wt{M}'',\wt{N}\big{)} @>>>{\rm Ext}^1_{\wt{\mc{O}}_Y^f}
\big{(}\wt{M},\wt{N}\big{)} @>>> {\rm Ext}^1_{\wt{\mc{O}}_Y^f}
\big{(}\wt{M}',\wt{N}\big{)} \\
 @. @VVT^1_{\wt{M}'',\wt{N}}V @VVT^1_{\wt{M},\wt{N}}V @VVT^1_{\wt{M}',\wt{N}}V \\
 @>\partial>> {\rm
Ext}^1_{{\mc{O}}_Y^f}\big{(}M'',{N}\big{)} @>>> {\rm
Ext}^1_{{\mc{O}}_Y^f}\big{(}M,{N}\big{)} @>>> {\rm
Ext}^1_{{\mc{O}}_Y^f}\big{(}M',{N}\big{)}\\
 \endCD
\end{eqnarray*}
\item[(ii)] The analogous statement holds replacing $T$ by
$\ov{T}$ in (i), $M$ by $\ov{M}$, et cetera.
\end{itemize}
\end{lem}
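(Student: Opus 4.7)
The plan is to derive both rows as the standard six-term exact sequence in $\mathrm{Ext}$ associated to a short exact sequence in the first variable, and then check naturality under the exact functor $T$ (resp.\ $\ov{T}$). Since $\wt{\mc{O}}_Y^f$ is abelian (\secref{sec:O}) and $\mathrm{Ext}^i$ is interpreted in the Baer--Yoneda sense, applying the contravariant functor ${\rm Hom}_{\wt{\mc{O}}_Y^f}(-,\wt{N})$ to $0\to\wt{M}'\to\wt{M}\to\wt{M}''\to 0$ yields the top row; left-exactness of ${\rm Hom}$ is automatic, and the connecting map $\wt{\partial}$ sends a morphism $\wt{f}\colon\wt{M}'\to\wt{N}$ to the pushout extension of the given sequence along $\wt{f}$. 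Closure of $\wt{\mc{O}}_Y^f$ under extensions (\lemref{g-extension}) guarantees that this pushout again lies in the category, so $\mathrm{Ext}^1$ is genuinely computed inside $\wt{\mc{O}}_Y^f$. For the bottom row, exactness of $T$ (\propref{functor}) turns the given sequence into $0\to M'\to M\to M''\to 0$ in $\mc{O}_Y^f$, and the same construction delivers the lower six-term sequence with connecting map $\partial$.

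The vertical arrows are defined directly by $T$: on ${\rm Hom}$ by $\wt{f}\mapsto T[\wt{f}]$, and on $\mathrm{Ext}^1$ by sending a Yoneda extension $0\to\wt{N}\to\wt{E}\to\wt{A}\to 0$ to the sequence $0\to N\to T(\wt{E})\to A\to 0$, which is again exact because $T$ is. Commutativity of the two squares between the ${\rm Hom}$-columns is immediate from functoriality of $T$, since $T$ respects composition with the given maps $\wt{i}$ and $\wt{M}\to\wt{M}''$. The two squares between the $\mathrm{Ext}^1$-columns are likewise functoriality of $T$ applied to Yoneda representatives. The one square requiring separate attention is the middle one, between ${\rm Hom}$ and $\mathrm{Ext}^1$: here commutativity amounts to the claim that $T$ commutes with the pushout used to construct the connecting map. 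This is a formal consequence of exactness, since the pushout of $\wt{M}'\to\wt{M}$ along $\wt{f}$ is the cokernel of $\wt{M}'\to\wt{M}\oplus\wt{N}$, $\wt{x}\mapsto(\wt{i}(\wt{x}),-\wt{f}(\wt{x}))$, and exact additive functors preserve cokernels.

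Part (ii) is proved by repeating the identical argument with $T$ replaced by $\ov{T}$, invoking that $\ov{T}$ is also exact and carries $\wt{\mc{O}}_Y^f$ into $\ov{\mc{O}}_Y^f$ by \propref{functor}. The only thing requiring care is the bookkeeping around the Yoneda construction of the connecting map, but once the pushout-preservation observation above is made, no further ingredient beyond exactness of the functors and the extension-closure \lemref{g-extension} is needed.
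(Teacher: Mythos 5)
Your proposal is correct and follows essentially the same route as the paper: exactness of the two rows is the standard six-term Baer--Yoneda sequence (the paper cites Mitchell for this), the only square needing real attention is the one involving the connecting homomorphism, and its commutativity is reduced to the fact that $T$ (resp.\ $\ov{T}$) commutes with the pushout defining $\wt{\partial}(\wt{f})$, which you justify, slightly more explicitly than the paper, by writing the pushout as a cokernel preserved by an exact additive functor.
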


\begin{proof} We shall only prove (i), as the argument for (ii)
is analogous. By \cite[Chapter VII Proposition 2.2]{M}, the rows are exact. We
only need to show that the following diagram is commutative.
\begin{eqnarray*}
\CD
 {\rm
Hom}_{\wt{\mc{O}}_Y^f}\big{(}\wt{M}',\wt{N}\big{)}  @>\wt{\partial}>> {\rm Ext}^{1}_{\wt{\mc{O}}_Y^f}
\big{(}\wt{M}'',\wt{N}\big{)} \\
 @VVT_{\wt{M}',\wt{N}}V @VVT^1_{\wt{M}'',\wt{N}}V \\
 {\rm
Hom}_{{\mc{O}}_Y^f}\big{(}M',{N}\big{)}  @>\partial>> {\rm Ext}^{1}_{{\mc{O}}_Y^f}\big{(}M'',{N}\big{)}
 \endCD
\end{eqnarray*}
Let
 $\wt{f}\in {\rm
Hom}_{\wt{\mc{O}}_Y^f}\big{(}\wt{M}',\wt{N}\big{)}$. Then
$\wt{\partial}(\wt{f})\in {\rm Ext}^{1}_{\wt{\mc{O}}_Y^f}
\big{(}\wt{M}'',\wt{N}\big{)}$ is bottom exact row of the following commutative diagram:
\begin{eqnarray*}
\CD
 0  @>>> \wt{M'} @>\wt{i}>>\wt{M} @>>> \wt{M}'' @>>> 0 \\
 @. @VV\wt{f}V @VVV @|\\
 0  @>>> \wt{N} @>>>\wt{E} @>>> \wt{M}'' @>>> 0 \\
 \endCD
\end{eqnarray*}
Here $\wt{E}$ is the pushout of $\wt{f}$ and $\wt{i}$, and all
maps are the obvious ones. Let $f:=T_{\wt{M}',\wt{N}}[\wt{f}]$.
Since $T$ is compatible with pushouts,
$T^{1}_{\wt{M}'',\wt{N}}(\wt{\partial}(\wt{f}))=\partial(f)$,
which is the pushout of $f$ and $T[\wt{i}]$. On the other hand,
$T_{\wt{M}',\wt{N}}[\wt{f}]=f$ and hence
$\partial(T_{\wt{M}',\wt{N}}[\wt{f}])=\partial(f)$. This completes
the proof.
\end{proof}

By \lemref{lem:aux1} and the fact that
$T(\wt{\varphi})\not=0$ and $\ov{T}(\wt{\varphi})\not=0$ for any nonzero $\wt{\mf{l}}_Y$-homomorphism
$\wt{\varphi}$ from $L(\wt{\mf{l}}_Y,\la^\theta)$ to itself with $\la\in\mc{P}_Y$, we have the following.

\begin{lem}\label{Hom:MN}
Let $\wt{M},\wt{N}\in\wt{\mc{O}}_Y$. We have
\begin{itemize}
\item[(i)] $T:{\rm
Hom}_{\wt{\mc{O}}_Y^f}(\wt{M},\wt{N})\rightarrow {\rm
Hom}_{{\mc{O}}_Y^f}(M,N)$ is an injection,
 \item[(ii)]
$\ov{T}:{\rm Hom}_{\wt{\mc{O}}_Y^f}(\wt{M},\wt{N})\rightarrow {\rm
Hom}_{\ov{{\mc{O}}}_Y^f}(\ov{M},\ov{N})$ is an injection.
\end{itemize}
\end{lem}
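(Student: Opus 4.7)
The plan is to prove part (i) directly; part (ii) is entirely parallel, with $\ov{T}$ in place of $T$ and $L(\ov{\mf{l}}_Y,\mu^\natural)$ in place of $L(\mf{l}_Y,\mu)$. Fix a nonzero $\wt{f}\in{\rm Hom}_{\wt{\mc{O}}_Y^f}(\wt{M},\wt{N})$; I need to exhibit a vector in $T(\wt{M})$ that is not killed by $T[\wt{f}]$.

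First I would exploit the defining property of $\wt{\mc{O}}_Y$ from \secref{sec:O}: both $\wt{M}$ and $\wt{N}$ are completely reducible as $\wt{\mf{l}}_Y$-modules, each decomposing into a direct sum of irreducibles $L(\wt{\mf{l}}_Y,\mu^\theta)$ with $\mu\in\mc{P}_Y$. Since $\wt{f}$ is in particular $\wt{\mf{l}}_Y$-linear, complete reducibility furnishes an $\wt{\mf{l}}_Y$-complement $\wt{M}_0$ to $\ker\wt{f}$ in $\wt{M}$, on which $\wt{f}$ is injective; choose any irreducible $\wt{\mf{l}}_Y$-summand $L(\wt{\mf{l}}_Y,\mu^\theta)\hookrightarrow\wt{M}_0$ for some $\mu\in\mc{P}_Y$. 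Applying Schur's lemma to $\wt{f}|_{L(\wt{\mf{l}}_Y,\mu^\theta)}$ and using the $\wt{\mf{l}}_Y$-isotypic decomposition of $\wt{N}$, one finds an $\wt{\mf{l}}_Y$-linear projection $\wt{N}\twoheadrightarrow L(\wt{\mf{l}}_Y,\mu^\theta)$ whose composition with $\wt{f}|_{L(\wt{\mf{l}}_Y,\mu^\theta)}$ is a nonzero $\wt{\mf{l}}_Y$-endomorphism $\wt{\varphi}$ of $L(\wt{\mf{l}}_Y,\mu^\theta)$.

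Next I would apply the functor $T$. By \lemref{lem:aux1} the endpoints become $T(L(\wt{\mf{l}}_Y,\mu^\theta))=L(\mf{l}_Y,\mu)$, and the commutative diagrams in \eqnref{T} imply that $T$ respects the composition used to define $\wt{\varphi}$, so that
\[
T(\wt{\varphi})\colon L(\mf{l}_Y,\mu)\hookrightarrow T(\wt{M})\xrightarrow{T[\wt{f}]} T(\wt{N})\twoheadrightarrow L(\mf{l}_Y,\mu).
\]
By the hypothesis recorded immediately before the lemma statement, $T(\wt{\varphi})\neq 0$, so $T[\wt{f}]\neq 0$ and (i) follows. The only mild subtlety is the choice of the $\wt{\mf{l}}_Y$-projection onto a specific irreducible summand of $\wt{N}$ meeting the image of $\wt{f}|_{L(\wt{\mf{l}}_Y,\mu^\theta)}$ nontrivially; this is not a real obstacle, since the $\mu^\theta$-isotypic component of $\wt{N}$ is completely reducible and the image in question is a single irreducible summand inside it, so a suitable projection is immediately available by Schur's lemma.
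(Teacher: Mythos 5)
Your argument is correct and is essentially the paper's own proof: the paper justifies this lemma by precisely the remark preceding it, namely \lemref{lem:aux1} together with the fact that $T(\wt{\varphi})\neq 0$ and $\ov{T}(\wt{\varphi})\neq 0$ for any nonzero $\wt{\mf{l}}_Y$-endomorphism $\wt{\varphi}$ of $L(\wt{\mf{l}}_Y,\mu^\theta)$, and your reduction via complete reducibility over $\wt{\mf{l}}_Y$ is just that argument spelled out. (For full self-containedness you could add the one-line justification of that fact rather than citing it as a ``hypothesis'': a nonzero endomorphism of the irreducible module $L(\wt{\mf{l}}_Y,\mu^\theta)$ is injective and preserves weight spaces, and $T(L(\wt{\mf{l}}_Y,\mu^\theta))=L(\mf{l}_Y,\mu)\neq 0$, so its restriction under $T$ is nonzero.)
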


\begin{lem}\label{lem:aux12}
Let $\la\in \mc{P}_Y$ and $\wt{N}\in\wt{\mc{O}}_Y^f$. We have
\begin{itemize}
\item[(i)] $T:{\rm
Hom}_{\wt{\mc{O}}_Y^f}(\wt{L}(\la^\theta),\wt{N}) \rightarrow {\rm
Hom}_{{\mc{O}}_Y^f}({L}(\la),N)$ is an isomorphism. \item[(ii)]
$\ov{T}:{\rm
Hom}_{\wt{\mc{O}}_Y^f}(\wt{L}(\la^\theta),\wt{N})\rightarrow {\rm
Hom}_{\ov{{\mc{O}}}_Y^f}(\ov{L}(\la^\natural),\ov{N})$ is an
isomorphism.
\end{itemize}
\end{lem}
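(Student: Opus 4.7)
The injectivity of $T$ on $\text{Hom}_{\wt{\mc{O}}^f_Y}(\wt{L}(\la^\theta), \wt{N})$ is immediate from \lemref{Hom:MN}(i) (applied with $\wt{M}=\wt{L}(\la^\theta)$), so only the surjectivity of $T$ on Hom needs to be proved. The plan is to lift a given $\varphi\in\text{Hom}_{\mc{O}^f_Y}(L(\la), N)$ through the parabolic Verma cover of $\wt{L}(\la^\theta)$ and then to argue, via the injectivity lemma, that the lift automatically descends to $\wt{L}(\la^\theta)$. By \corref{aux:cor1} the module $\wt{K}(\la^\theta)$ is a highest-weight $\DG$-module having $\wt{L}(\la^\theta)$ as irreducible quotient; denote this canonical surjection by $\wt{\pi}_\la\colon\wt{K}(\la^\theta)\twoheadrightarrow\wt{L}(\la^\theta)$ and let $\wt{R}:=\ker\wt{\pi}_\la$. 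Applying the exact functor $T$ (\propref{functor}) together with \thmref{matching:modules} produces the companion exact sequence $0\to R\to K(\la)\to L(\la)\to 0$ with $R=T(\wt{R})$, and since $\wt{R}$ is a subobject of the finite-length object $\wt{K}(\la^\theta)$, it still lies in $\wt{\mc{O}}^f_Y$.

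Given $\varphi$, I will form $\varphi':=\varphi\circ\pi_\la\in\text{Hom}_{\mc{O}^f_Y}(K(\la), N)$, where $\pi_\la\colon K(\la)\twoheadrightarrow L(\la)$ is the natural surjection. By \lemref{ext:uhomology}(i) with $i=0$ (the iso on $\text{Ext}^0=\text{Hom}$ for parabolic Verma modules) there is a unique $\wt{\varphi}'\in\text{Hom}_{\wt{\mc{O}}^f_Y}(\wt{K}(\la^\theta), \wt{N})$ with $T[\wt{\varphi}']=\varphi'$. The crucial point is that the restriction $\wt{\varphi}'|_{\wt{R}}\colon\wt{R}\to\wt{N}$ is killed by $T$: indeed $T[\wt{\varphi}'|_{\wt{R}}]=T[\wt{\varphi}']|_R=\varphi\circ\pi_\la|_R=0$, since $R=\ker\pi_\la$. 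Then \lemref{Hom:MN}(i) applied to the pair $(\wt{R},\wt{N})$ forces $\wt{\varphi}'|_{\wt{R}}=0$, so $\wt{\varphi}'$ descends to a morphism $\wt{\varphi}\in\text{Hom}_{\wt{\mc{O}}^f_Y}(\wt{L}(\la^\theta),\wt{N})$ satisfying $\wt{\varphi}\circ\wt{\pi}_\la=\wt{\varphi}'$. Applying $T$ once more and cancelling the epimorphism $\pi_\la$ yields $T[\wt{\varphi}]=\varphi$, establishing surjectivity.

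Part (ii) is proved in exactly the same fashion, replacing $T$ by $\ov{T}$, $K(\la)$ by $\ov{K}(\la^\natural)$, and $L(\la)$ by $\ov{L}(\la^\natural)$, and invoking the corresponding halves of \lemref{ext:uhomology} and \lemref{Hom:MN}. The only subtle point in the plan is ensuring that the two inputs really compose as described: the iso supplied by \lemref{ext:uhomology} for the Verma $\wt{K}(\la^\theta)$ gives the lift, while \lemref{Hom:MN}, applied now not to an irreducible but to the kernel $\wt{R}$, supplies precisely the information needed to kill $\wt{\varphi}'|_{\wt{R}}$. No induction on the composition length of $\wt{N}$ nor any higher-$\text{Ext}$ comparison is required, because the argument bypasses the right-hand side of the long exact sequence entirely by using the left-exactness of $\text{Hom}(-,\wt{N})$ together with the already-proven injectivity on Hom.
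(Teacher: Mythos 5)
Your proposal is correct and follows essentially the same route as the paper: both arguments use the short exact sequence $0\to\wt{M}\to\wt{K}(\la^\theta)\to\wt{L}(\la^\theta)\to 0$, the isomorphism ${\rm Hom}_{\wt{\mc{O}}_Y^f}(\wt{K}(\la^\theta),\wt{N})\cong{\rm Hom}_{\mc{O}_Y^f}(K(\la),N)$ from \lemref{ext:uhomology} with $i=0$, and the injectivity of \lemref{Hom:MN} applied to the kernel. The paper packages this as a diagram chase on the left-exact ${\rm Hom}$ sequences, while you carry out the same chase explicitly by lifting $\varphi\circ\pi_\la$ and descending; the content is identical.
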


\begin{proof} Consider the commutative diagram with exact rows
\begin{eqnarray*}
\CD
 0  @>>> \wt{M} @>>>\wt{K}(\la^\theta) @>>> \wt{L}(\la^\theta) @>>> 0 \\
 @. @VVT_{\wt{M}}V @VVT_{\wt{K}(\la^\theta)}V @VVT_{\wt{L}(\la^\theta)}V\\
 0  @>>> M @>>>{K}(\la) @>>> L(\la) @>>> 0 \\
 \endCD
\end{eqnarray*}
We obtain the following commutative diagram with exact rows.
\begin{eqnarray*}
\CD
 0  @>>> {\rm Hom}_{\wt{\mc{O}}_Y^f}\big{(}\wt{L}(\la^\theta),\wt{N}\big{)} @>>>{\rm Hom}_{\wt{\mc{O}}_Y^f}\big{(}\wt{K}(\la^\theta),\wt{N}\big{)} @>>> {\rm Hom}_{\wt{\mc{O}}_Y^f}\big{(}\wt{M},\wt{N}\big{)} \\
 @. @VVT_{\wt{L}(\la^\theta),\wt{N}}V @VVT_{\wt{K}(\la^\theta),\wt{N}}V @VVT_{\wt{M},\wt{N}}V\\
 0  @>>> {\rm
Hom}_{{\mc{O}}_Y^f}\big{(}{L}(\la),{N}\big{)} @>>>{\rm
Hom}_{{\mc{O}}_Y^f}\big{(}K(\la),{N}\big{)} @>>> {\rm Hom}_{{\mc{O}}_Y^f}\big{(}M,{N}\big{)}  \\
 \endCD
\end{eqnarray*}
By \lemref{ext:uhomology} $T_{\wt{K}(\la^\theta),\wt{N}}$ is an
isomorphism and by \lemref{Hom:MN} $T_{\wt{M},\wt{N}}$ is an
injection. This implies that $T_{\wt{L}(\la^\theta),\wt{N}}$ is an
isomorphism.
\end{proof}

\begin{lem}\label{lem:aux13}
Let $\la\in \mc{P}_Y$ and $\wt{N}\in\wt{\mc{O}}_Y^f$. We have
\begin{itemize}
\item[(i)] $T:{\rm
Ext}^1_{\wt{\mc{O}}_Y^f}(\wt{L}(\la^\theta),\wt{N}) \rightarrow
{\rm Ext}^1_{{\mc{O}}_Y^f}({L}(\la),N)$ is an injection.
\item[(ii)] $\ov{T}:{\rm
Ext}^1_{\wt{\mc{O}}_Y^f}(\wt{L}(\la^\theta),\wt{N})\rightarrow
{\rm Ext}^1_{\ov{{\mc{O}}}_Y^f}(\ov{L}(\la^\natural),\ov{N})$ is
an injection.
\end{itemize}
\end{lem}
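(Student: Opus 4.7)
The plan is to exploit the short exact sequence
\[
0\longrightarrow\wt{M}\longrightarrow\wt{K}(\la^\theta)\longrightarrow\wt{L}(\la^\theta)\longrightarrow 0
\]
(where $\wt{M}$ is the maximal submodule of $\wt{K}(\la^\theta)$), apply $T$ to obtain the corresponding sequence $0\to M\to K(\la)\to L(\la)\to 0$ in $\mc{O}_Y^f$ via \thmref{matching:modules} and the exactness of $T$ (\propref{functor}), and then compare the two resulting long exact $\mathrm{Hom}$--$\mathrm{Ext}$ sequences obtained by applying $\mathrm{Hom}_{\wt{\mc{O}}_Y^f}(-,\wt{N})$ and $\mathrm{Hom}_{\mc{O}_Y^f}(-,N)$ respectively. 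Naturality of the Baer--Yoneda extension groups guarantees that $T$ provides vertical maps that make the diagram commute; the previous lemma of the paper spells out this commutativity explicitly for the six-term piece.

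The plan is then to invoke the Four Lemma on the truncated commutative diagram
\[
\begin{CD}
\mathrm{Hom}(\wt{K}(\la^\theta),\wt{N}) @>>> \mathrm{Hom}(\wt{M},\wt{N}) @>>> \mathrm{Ext}^1(\wt{L}(\la^\theta),\wt{N}) @>>> \mathrm{Ext}^1(\wt{K}(\la^\theta),\wt{N}) \\
@VVT_{\wt{K}(\la^\theta),\wt{N}}V @VVT_{\wt{M},\wt{N}}V @VVT^1_{\wt{L}(\la^\theta),\wt{N}}V @VVT^1_{\wt{K}(\la^\theta),\wt{N}}V \\
\mathrm{Hom}(K(\la),N) @>>> \mathrm{Hom}(M,N) @>>> \mathrm{Ext}^1(L(\la),N) @>>> \mathrm{Ext}^1(K(\la),N)
\end{CD}
\]
with exact rows. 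By \lemref{ext:uhomology} the first and fourth vertical arrows are isomorphisms (in particular the first is surjective and the fourth is injective), and by \lemref{Hom:MN} the second vertical arrow is injective. A standard diagram chase (the injectivity half of the Four Lemma) then forces the third vertical arrow $T^1_{\wt{L}(\la^\theta),\wt{N}}$ to be injective, which is precisely statement (i). Statement (ii) is obtained verbatim by replacing $T$ with $\ov{T}$, using the $\ov{T}$-analogues of Lemmas \ref{ext:uhomology} and \ref{Hom:MN}.

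The main technical point to verify carefully is that $\wt{M}\in\wt{\mc{O}}_Y^f$ (so that the $\mathrm{Ext}^1$ groups involving $\wt{M}$ are unambiguous and the previously established lemmas apply). This follows because $\wt{M}$ is a submodule of $\wt{K}(\la^\theta)$, which lies in $\wt{\mc{O}}_Y^f$ by \corref{G-finite:filt}, and the latter category is closed under taking submodules having the required $\wt{\mf{l}}_Y$-decomposition, a fact implicit in \lemref{g-extension}. Beyond this bookkeeping the argument is purely formal: given the sharp comparison results already in hand at the level of $K(\la)$ and $\wt{K}(\la^\theta)$, the Four Lemma converts them into the desired injectivity statements at the level of $L(\la)$ and $\wt{L}(\la^\theta)$ without any further geometric or representation-theoretic input.
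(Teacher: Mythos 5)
Your argument is correct, but it follows a genuinely different route from the paper. The paper proves the injectivity directly at the level of extensions: given a short exact sequence $0\to\wt{N}\to\wt{E}\to\wt{L}(\la^\theta)\to 0$ whose image under $T$ splits, it lifts the splitting map $\psi\in{\rm Hom}_{\mc{O}_Y^f}(L(\la),E)$ through the Hom-bijection of \lemref{lem:aux12} to $\wt{\psi}\in{\rm Hom}_{\wt{\mc{O}}_Y^f}(\wt{L}(\la^\theta),\wt{E})$, and then uses \lemref{lem:aux12} once more to conclude $\wt{f}\circ\wt{\psi}=1_{\wt{L}(\la^\theta)}$, so the original sequence splits. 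You instead bypass \lemref{lem:aux12} altogether and run the injectivity half of the Four Lemma on the Hom--${\rm Ext}^1$ ladder attached to $0\to\wt{M}\to\wt{K}(\la^\theta)\to\wt{L}(\la^\theta)\to 0$, with the Verma terms controlled by \lemref{ext:uhomology} and the submodule term by \lemref{Hom:MN}; this is legitimate, not circular (all inputs precede the statement), and in fact mirrors the diagram-chase structure the paper only deploys later, in the proof of \lemref{thm:aux2}. Both arguments ultimately rest on the same two inputs (\lemref{ext:uhomology} and \lemref{Hom:MN}); yours is more formal and uniform, while the paper's splitting argument is more hands-on and makes the role of the Hom-isomorphism for irreducibles explicit. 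One small correction of attribution: the fact that $\wt{M}\in\wt{\mc{O}}_Y^f$ is not a consequence of \lemref{g-extension}, which concerns extensions rather than submodules; it follows instead because $\wt{M}$ is a submodule of the finite-length module $\wt{K}(\la^\theta)$, so it has finite length with composition factors of the form $\wt{L}(\nu^\theta)$, and its $\wt{\mf{l}}_Y$-decomposition is inherited from the complete reducibility of $\wt{K}(\la^\theta)$ over $\wt{\mf{l}}_Y$ -- a fact the paper itself uses implicitly in the proofs of \lemref{lem:aux12} and \lemref{thm:aux2}. You also implicitly identify ${\rm Ext}^1$ computed in $\wt{\mc{O}}_Y$ with that in $\wt{\mc{O}}_Y^f$ when citing \lemref{ext:uhomology}; this is harmless by \lemref{g-extension} and is exactly how the paper uses that lemma as well.
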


\begin{proof}
Let
\begin{equation}\label{aux112}
0\longrightarrow\wt{N}\longrightarrow\wt{E}\stackrel{\wt{f}}{\longrightarrow}\wt{L}(\la^\theta)\longrightarrow 0
\end{equation}
be an exact sequence of $\DG$-modules. Suppose that
\eqnref{aux112} gives rise to a split exact sequence of
$\G$-modules
\begin{equation*}
0\longrightarrow{N}\longrightarrow{E}\stackrel{T[\wt{f}]}{\longrightarrow}{L}(\la)\longrightarrow
0.
\end{equation*}
Thus there exists $\psi\in{\rm Hom}_{\mc{O}_Y^f}(L(\la),E)$ such
that $ T[\wt{f}]\circ\psi=1_{L(\la)}$.  By \lemref{lem:aux12}
there exists $\wt{\psi}\in {\rm
Hom}_{\wt{\mc{O}}_Y^f}(\wt{L}(\la^\theta),\wt{E})$ such that
$T[\wt{\psi}]=\psi$.  Thus $T[ \wt{f}\circ\wt{\psi}]=
T[\wt{f}]\circ T[\wt{\psi}]=1_{L(\la)}$. By \lemref{lem:aux12}
we have $ \wt{f}\circ\wt{\psi}=1_{\wt{L}(\la^\theta)}$, and hence
\eqnref{aux112} is split.
\end{proof}

\begin{lem}\label{thm:aux1}
Let $\wt{M},\wt{N}\in\wt{\mc{O}}_Y^f$. We have
\begin{itemize}
\item[(i)] $T:{\rm Hom}_{\wt{\mc{O}}_Y^f}(\wt{M},\wt{N})\rightarrow {\rm Hom}_{{\mc{O}}_Y^f}({M},N)$ is an isomorphism.
\item[(ii)] $\ov{T}:{\rm Hom}_{\wt{\mc{O}}_Y^f}(\wt{M},\wt{N})\rightarrow {\rm Hom}_{\ov{{\mc{O}}}_Y^f}(\ov{M},\ov{N})$ is an isomorphism.
\end{itemize}
\end{lem}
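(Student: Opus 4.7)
The plan is to induct on the composition length of $\wt{M}$ in $\wt{\mc{O}}_Y^f$, with $\wt{N}$ held fixed. Injectivity of $T$ on ${\rm Hom}$-spaces is already in hand from \lemref{Hom:MN}, so the entire content of the statement is surjectivity.

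The base case is $\wt{M}\cong\wt{L}(\la^\theta)$ for some $\la\in\mc{P}_Y$, which is exactly \lemref{lem:aux12}. For the inductive step, I would choose a maximal proper submodule $\wt{M}'\subset\wt{M}$, so that $\wt{M}/\wt{M}'\cong\wt{L}(\la^\theta)$ for some $\la\in\mc{P}_Y$, producing a short exact sequence
\begin{equation*}
0\longrightarrow\wt{M}'\longrightarrow\wt{M}\longrightarrow\wt{L}(\la^\theta)\longrightarrow 0
\end{equation*}
in $\wt{\mc{O}}_Y^f$ whose kernel $\wt{M}'$ has strictly smaller composition length. Applying the exact functor $T$ (\propref{functor}), together with \thmref{matching:modules}, yields the parallel short exact sequence $0\to M'\to M\to L(\la)\to 0$ in ${\mc{O}}_Y^f$. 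Feeding both sequences into the standard four-term ${\rm Hom}$-${\rm Ext}^1$ long exact sequence (Baer--Yoneda, valid in any abelian category) produces the commutative ladder
\begin{equation*}
\begin{CD}
0 @>>> {\rm Hom}(\wt{L}(\la^\theta),\wt{N}) @>>> {\rm Hom}(\wt{M},\wt{N}) @>>> {\rm Hom}(\wt{M}',\wt{N}) @>>> {\rm Ext}^1(\wt{L}(\la^\theta),\wt{N}) \\
@. @VV{T_1}V @VV{T_2}V @VV{T_3}V @VV{T_4}V \\
0 @>>> {\rm Hom}({L}(\la),{N}) @>>> {\rm Hom}({M},{N}) @>>> {\rm Hom}({M}',{N}) @>>> {\rm Ext}^1({L}(\la),{N})
\end{CD}
\end{equation*}
in which $T_1$ is an isomorphism by \lemref{lem:aux12}, $T_3$ is an isomorphism by the inductive hypothesis, and $T_4$ is injective by \lemref{lem:aux13}.

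At this point the four-lemma delivers surjectivity of $T_2$. Concretely, given $\varphi\in{\rm Hom}(M,N)$, I would lift its restriction $\varphi|_{M'}$ through $T_3^{-1}$ to some $\wt{\varphi}'\in{\rm Hom}(\wt{M}',\wt{N})$; its connecting image in ${\rm Ext}^1(\wt{L}(\la^\theta),\wt{N})$ maps under the injective $T_4$ to the connecting image of $\varphi|_{M'}$, which vanishes because $\varphi|_{M'}$ already extends to $\varphi$, so $\wt{\varphi}'$ extends to some $\wt{\varphi}_0:\wt{M}\to\wt{N}$; the residual discrepancy $\varphi-T[\wt{\varphi}_0]$ vanishes on $M'$, hence factors through $L(\la)$, and is lifted via $T_1^{-1}$ to $\wt{L}(\la^\theta)\to\wt{N}$. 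Adding this correction to $\wt{\varphi}_0$ yields a preimage of $\varphi$ under $T_2$. Combined with the injectivity from \lemref{Hom:MN}, this closes the induction. Part (ii) for $\ov{T}$ is strictly parallel, using the $\ov{T}$-versions of Lemmas \ref{lem:aux12}, \ref{lem:aux13}, and \ref{Hom:MN}. I do not anticipate any genuine obstacle: the argument is a formal diagram chase, and all the non-formal inputs -- the base case and the ${\rm Ext}^1$-injectivity on irreducibles -- have been prepared by the earlier lemmas precisely so that this induction closes.
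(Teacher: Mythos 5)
Your proposal is correct and coincides with the paper's own argument: the same induction on composition length with base case \lemref{lem:aux12}, the same short exact sequence $0\to\wt{M}'\to\wt{M}\to\wt{L}(\la^\theta)\to 0$, and the same ${\rm Hom}$--${\rm Ext}^1$ ladder resolved by a four-lemma chase using \lemref{lem:aux12}, the inductive hypothesis, \lemref{lem:aux13}, and the injectivity from \lemref{Hom:MN}. No gaps.
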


\begin{proof}
We proceed by induction on the length of a composition series of $\wt{M}$. If $\wt{M}$ is irreducible, then it is true by \lemref{lem:aux12}.

Consider the following commutative diagram with exact top row of $\DG$-modules and exact bottom row of $\G$-modules.
\begin{eqnarray}\label{aux113}
\CD
 0  @>>> \wt{M'} @>\wt{i}>>\wt{M} @>>> \wt{L}(\la^\theta) @>>> 0 \\
 @. @VVT_{\wt{M'}}V @VVT_{\wt{M}}V @VVT_{\wt{L}(\la^\theta)}V\\
 0  @>>> M' @>i>>{M} @>>> L(\la) @>>> 0 \\
 \endCD
\end{eqnarray}
The sequence \eqnref{aux113} induces the following commutative diagram with exact rows.
\begin{eqnarray*}
\hskip -2cm\CD
0  @>>> {\rm Hom}_{\wt{\mc{O}}_Y^f}\big{(}\wt{L}(\la^\theta),\wt{N})\big{)} @>>>{\rm Hom}_{\wt{\mc{O}}_Y^f}\big{(}\wt{M},\wt{N}\big{)} @>\wt{i}^*>> \\
@. @VVT_{\wt{L}(\la^\theta),\wt{N}}V @VVT_{\wt{M},\wt{N}}V \\
0  @>>> {\rm Hom}_{{\mc{O}}_Y^f}\big{(}{L}(\la),N\big{)} @>>>{\rm Hom}_{{\mc{O}}_Y^f}\big{(}{M},N\big{)} @>i^*>> \\
\endCD
\end{eqnarray*}
\begin{eqnarray*}
\hskip 5cm\CD
@>\wt{i}^*>>{\rm Hom}_{\wt{\mc{O}}_Y^f}\big{(}\wt{M}',\wt{N}\big{)} @>>> {\rm Ext}^{1}_{\wt{\mc{O}}_Y^f}
\big{(} \wt{L}(\la^\theta),\wt{N}\big{)} \\
@. @VVT_{\wt{M}',\wt{N}}V @VVT^1_{\wt{L}(\la^\theta),\wt{N}}V \\
@>i^*>> {\rm Hom}_{{\mc{O}}_Y^f}\big{(}{M}',N\big{)} @>>> {\rm
Ext}^{1}_{{\mc{O}}_Y^f}\big{(}{L}(\la),{N}\big{)}
\endCD
\end{eqnarray*}
The map $T_{\wt{M}',\wt{N}}$ is an isomorphism by induction.  The
map $T^1_{\wt{L}(\la^\theta),\wt{N}}$ is an injection by
\lemref{lem:aux13}. Also $T_{\wt{L}(\la^\theta),\wt{N}}$ is an
isomorphism by \lemref{lem:aux12}. This implies that
$T_{\wt{M},\wt{N}}$ is an isomorphism.
\end{proof}

\begin{lem}\label{lem:aux14}
Let $\wt{M},\wt{N}\in\wt{\mc{O}}_Y^f$. We have
\begin{itemize}
\item[(i)] $T:{\rm Ext}^1_{\wt{\mc{O}}_Y^f}(\wt{M},\wt{N})\rightarrow {\rm Ext}^1_{{\mc{O}}_Y^f}({M},N)$ is an injection,
\item[(ii)] $\ov{T}:{\rm Ext}^1_{\wt{\mc{O}}_Y^f}(\wt{M},\wt{N})\rightarrow {\rm Ext}^1_{\ov{{\mc{O}}}_Y^f}(\ov{M},\ov{N})$ is an injection.
\end{itemize}
\end{lem}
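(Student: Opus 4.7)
The plan is to argue by induction on the composition length of $\wt{M}$, treating only the case of $T$ since the argument for $\ov{T}$ is entirely parallel. The base case, in which $\wt{M}\cong\wt{L}(\la^\theta)$ is irreducible, is precisely \lemref{lem:aux13}. For the inductive step, fix a short exact sequence
\begin{equation*}
0\longrightarrow\wt{M}'\longrightarrow\wt{M}\longrightarrow\wt{L}(\la^\theta)\longrightarrow 0
\end{equation*}
in $\wt{\mc{O}}_Y^f$ with $\wt{M}'$ of strictly smaller composition length, and apply $T$ to obtain the corresponding short exact sequence in $\mc{O}_Y^f$. The associated long exact sequences for ${\rm Ext}_{\wt{\mc{O}}_Y^f}(-,\wt{N})$ and ${\rm Ext}_{\mc{O}_Y^f}(-,N)$, linked by the natural vertical maps induced by $T$, fit into a commutative diagram. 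The commutativity follows from a routine naturality check for the Baer--Yoneda construction, of the same flavor as the commutative diagram preceding \lemref{Hom:MN}.

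Given a class $\xi\in{\rm Ext}^1_{\wt{\mc{O}}_Y^f}(\wt{M},\wt{N})$ with $T^1_{\wt{M},\wt{N}}(\xi)=0$, its restriction $\xi'\in{\rm Ext}^1_{\wt{\mc{O}}_Y^f}(\wt{M}',\wt{N})$ maps to $0$ in ${\rm Ext}^1_{\mc{O}_Y^f}(M',N)$ by commutativity, hence vanishes by the inductive hypothesis. Exactness of the top row then produces $\eta\in{\rm Ext}^1_{\wt{\mc{O}}_Y^f}(\wt{L}(\la^\theta),\wt{N})$ mapping to $\xi$, and commutativity of the diagram shows that $T^1_{\wt{L}(\la^\theta),\wt{N}}(\eta)$ is killed by the map to ${\rm Ext}^1_{\mc{O}_Y^f}(M,N)$. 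Exactness of the bottom row therefore yields $f\in{\rm Hom}_{\mc{O}_Y^f}(M',N)$ with $\partial(f)=T^1_{\wt{L}(\la^\theta),\wt{N}}(\eta)$.

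At this point \lemref{thm:aux1} provides a (unique) lift $\wt{f}\in{\rm Hom}_{\wt{\mc{O}}_Y^f}(\wt{M}',\wt{N})$ with $T_{\wt{M}',\wt{N}}[\wt{f}]=f$. Replacing $\eta$ by $\eta-\wt{\partial}(\wt{f})$ does not change its image $\xi$ in ${\rm Ext}^1_{\wt{\mc{O}}_Y^f}(\wt{M},\wt{N})$, but reduces us to the case $T^1_{\wt{L}(\la^\theta),\wt{N}}(\eta)=0$. The base case (\lemref{lem:aux13}) then forces $\eta=0$, so that $\xi=0$ as well, completing the induction.

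The main technical subtlety will be the commutativity of the long-exact-sequence diagram under the vertical maps induced by $T$ and $\ov{T}$; once that is in place, the rest is a standard five-lemma-style diagram chase, with input drawn from \lemref{lem:aux13} (base case), \lemref{thm:aux1} (the Hom isomorphism used to lift $f$ to $\wt{f}$), and the inductive hypothesis.
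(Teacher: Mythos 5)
Your proof is correct, but it follows a genuinely different route from the paper's. The paper disposes of this lemma in two lines: it repeats the argument of \lemref{lem:aux13} verbatim with \lemref{thm:aux1} in place of \lemref{lem:aux12} --- given an extension $0\to\wt{N}\to\wt{E}\stackrel{\wt{f}}{\to}\wt{M}\to 0$ whose image under $T$ splits via some $\psi\in{\rm Hom}_{\mc{O}_Y^f}(M,E)$, one lifts $\psi$ to $\wt{\psi}$ by the fullness of $T$ on Hom-spaces (\lemref{thm:aux1}, applicable since $\wt{E}\in\wt{\mc{O}}_Y^f$ by \lemref{g-extension}), and then faithfulness gives $\wt{f}\circ\wt{\psi}=1_{\wt{M}}$, so the original Yoneda class vanishes; no induction and no long exact sequence are needed at this stage. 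You instead run a second induction on the composition length of $\wt{M}$, take \lemref{lem:aux13} as the base case, and perform a five-lemma-style chase through the two six-term sequences; the commutativity you flag as the main subtlety is in fact exactly the content of the unnumbered lemma preceding \lemref{Hom:MN}, so you may simply cite it rather than redo the naturality check, and the lift of $f$ to $\wt{f}$ is again \lemref{thm:aux1}. Your chase is sound (in particular, subtracting $\wt{\partial}(\wt{f})$ from $\eta$ does not change its image $\xi$, and the base case then kills $\eta-\wt{\partial}(\wt{f})$), and it has the mild virtue of making explicit how the dévissage interacts with the connecting maps; the paper's argument buys brevity by exploiting directly that a Yoneda ${\rm Ext}^1$-class is zero iff the sequence splits, which reduces everything to the already-established fullness and faithfulness of $T$ and $\ov{T}$ on morphisms.
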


\begin{proof}
The proof is virtually identical to the proof of \lemref{lem:aux13}.  Here we use
\lemref{thm:aux1} instead of \lemref{lem:aux12}.
\end{proof}

\begin{lem}\label{thm:aux2}
Let $\la\in \mc{P}_Y$ and $\wt{N}\in\wt{\mc{O}}_Y^f$. We have
\begin{itemize}
\item[(i)] $T:{\rm
Ext}^1_{\wt{\mc{O}}_Y^f}(\wt{L}(\la^\theta),\wt{N}) \rightarrow
{\rm Ext}^1_{{\mc{O}}_Y^f}({L}(\la),N)$ is an isomorphism.
\item[(ii)] $\ov{T}:{\rm
Ext}^1_{\wt{\mc{O}}_Y^f}(\wt{L}(\la^\theta),\wt{N}) \rightarrow
{\rm Ext}^1_{\ov{{\mc{O}}}_Y^f}(\ov{L}(\la^\natural),\ov{N})$ is
an isomorphism.
\end{itemize}
\end{lem}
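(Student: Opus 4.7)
The plan is to derive this isomorphism from the results already established for parabolic Verma modules and for general modules in $\wt{\mc{O}}_Y^f$, via a standard five-lemma argument applied to a long exact sequence in Ext.

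First I would take the canonical short exact sequence
\begin{equation*}
0 \longrightarrow \wt{M} \longrightarrow \wt{K}(\la^\theta) \longrightarrow \wt{L}(\la^\theta) \longrightarrow 0
\end{equation*}
in $\wt{\mc{O}}_Y^f$, where $\wt{M}$ denotes the maximal proper submodule of $\wt{K}(\la^\theta)$. Since $\wt{\mc{O}}_Y^f$ is an abelian category, $\wt{M}\in\wt{\mc{O}}_Y^f$. Applying the exact functor $T$ (\propref{functor}) together with \thmref{matching:modules} yields the parallel short exact sequence $0\to M\to K(\la)\to L(\la)\to 0$ in $\mc{O}_Y^f$, where $M=T(\wt{M})$. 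I would then apply the contravariant functors ${\rm Hom}_{\wt{\mc{O}}_Y^f}(-,\wt{N})$ and ${\rm Hom}_{{\mc{O}}_Y^f}(-,N)$ and stack the resulting six-term long exact Ext-sequences into a commutative ladder connected by the natural transformation induced by $T$.

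At this point the previously established inputs do all the work. By \lemref{thm:aux1}, the maps induced by $T$ on ${\rm Hom}_{\wt{\mc{O}}_Y^f}(\wt{K}(\la^\theta),\wt{N})$ and on ${\rm Hom}_{\wt{\mc{O}}_Y^f}(\wt{M},\wt{N})$ are isomorphisms; by \lemref{ext:uhomology}, the map on ${\rm Ext}^1_{\wt{\mc{O}}_Y^f}(\wt{K}(\la^\theta),\wt{N})$ is an isomorphism; and by \lemref{lem:aux14}, the map on ${\rm Ext}^1_{\wt{\mc{O}}_Y^f}(\wt{M},\wt{N})$ is an injection. Applying the five-lemma to the connecting segment
\begin{equation*}
{\rm Hom}(\wt{K}(\la^\theta),\wt{N})\to {\rm Hom}(\wt{M},\wt{N})\to {\rm Ext}^1(\wt{L}(\la^\theta),\wt{N})\to {\rm Ext}^1(\wt{K}(\la^\theta),\wt{N})\to {\rm Ext}^1(\wt{M},\wt{N})
\end{equation*}
and its counterpart in $\mc{O}_Y^f$ forces the middle vertical arrow, namely $T:{\rm Ext}^1_{\wt{\mc{O}}_Y^f}(\wt{L}(\la^\theta),\wt{N})\to {\rm Ext}^1_{\mc{O}_Y^f}(L(\la),N)$, to be an isomorphism, giving (i). Part (ii) should follow by the identical argument with $\ov{T}$ in place of $T$, invoking the $\ov{T}$-versions of each lemma cited above.

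No substantive obstacle remains at this stage, since all the hard work — the relative Koszul resolution and the identification of relative Ext with $\mf{u}_Y$-cohomology in \propref{relative ext}, the comparison of $\mf{u}_-$-homology in \thmref{matching:KL}, and the inductive arguments in \lemref{thm:aux1} and \lemref{lem:aux14} — has already been absorbed into the preceding lemmas. The only minor point to verify is that the natural long exact sequences in Ext fit into a genuinely commutative ladder under the functors $T$ and $\ov{T}$; but this is precisely the pushout-compatibility already noted in the proof of the preceding $\rm Ext^1$-lemma, so the ladder commutes naturally.
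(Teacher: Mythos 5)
Your proposal is correct and follows essentially the same route as the paper: the paper's proof also applies ${\rm Hom}(-,\wt{N})$ to the short exact sequence $0\to\wt{M}\to\wt{K}(\la^\theta)\to\wt{L}(\la^\theta)\to 0$, stacks the resulting long exact sequences via $T$ (respectively $\ov{T}$), and concludes by the same five-lemma-type chase using \lemref{thm:aux1}, \lemref{ext:uhomology}, and \lemref{lem:aux14}. No discrepancies to report.
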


\begin{proof}
Consider the following commutative diagram with exact rows.
\begin{eqnarray}\label{aux115}
\CD
0  @>>> \wt{M} @>>>\wt{K}(\la^\theta) @>\wt{\pi}>> \wt{L}(\la^\theta) @>>> 0 \\
@. @VVT_{\wt{M}}V @VVT_{\wt{K}(\la^\theta)}V @VVT_{\wt{L}(\la^\theta)}V\\
0  @>>> M @>>>{K(\la)} @>\pi>> L(\la) @>>> 0 \\
\endCD
\end{eqnarray}
The sequence \eqnref{aux115} induces the following commutative diagram with exact rows.
\begin{eqnarray*}
\CD
{\rm Hom}_{\wt{\mc{O}}_Y^f}\big{(}\wt{K}(\la^\theta),\wt{N}\big{)}  @>>> {\rm Hom}_{\wt{\mc{O}}_Y^f}\big{(}\wt{M},\wt{N})\big{)} @>>>{\rm
Ext}^1_{\wt{\mc{O}}_Y^f}\big{(}\wt{L}(\la^\theta),\wt{N}\big{)}@>\wt{\pi}^*>> \\
@VVT_{\wt{K}(\la^\theta),\wt{N}}V @VVT_{\wt{M},\wt{N}}V @VVT^1_{\wt{L}(\la^\theta),\wt{N}}V \\
{\rm Hom}_{{\mc{O}}_Y^f}\big{(}{K}(\la),{N}\big{)}  @>>> {\rm
Hom}_{{\mc{O}}_Y^f}\big{(}M,{N}\big{)} @>>>{\rm
Ext}^1_{{\mc{O}}_Y^f}\big{(}{L}(\la),{N}\big{)} @>\pi^*>> \\
\endCD
\end{eqnarray*}
\begin{eqnarray*}
\hskip 6cm\CD
@>\wt{\pi}^*>>{\rm
Ext}^1_{\wt{\mc{O}}_Y^f}\big{(}\wt{K}(\la^\theta),\wt{N}\big{)} @>>> {\rm Ext}_{\wt{\mc{O}}_Y^f}^1\big{(} \wt{M},\wt{N}\big{)} \\
@. @VVT^1_{\wt{K}(\la^\theta),\wt{N}}V @VVT^1_{\wt{M},\wt{N}}V \\
@>\pi^*>> {\rm Ext}^1_{{\mc{O}}_Y^f}\big{(}{K}(\la),{N}\big{)}
@>>> {\rm Ext}_{{\mc{O}}_Y^f}^1\big{(} {M},{N}\big{)}
\endCD
\end{eqnarray*}
The map $T^1_{\wt{M},\wt{N}}$ is an injection by
\lemref{lem:aux14}. The map $T^1_{\wt{K}(\la^\theta),\wt{N}}$ is
an isomorphism by \lemref{ext:uhomology}.  Also
$T_{\wt{K}(\la^\theta),\wt{N}}$ and $T_{\wt{M},\wt{N}}$ are
isomorphisms by \lemref{thm:aux1}.   Thus
$T^1_{\wt{L}(\la^\theta),\wt{N}}$ is an isomorphism.
\end{proof}

We have now all the ingredients to prove of \thmref{thm:equivalence}.
\medskip

\noindent {\em Proof of \thmref{thm:equivalence}.} \lemref{thm:aux2} implies that for
every $M\in\mc{O}_Y^f$ there exists $\wt{M}\in\wt{\mc{O}}_Y^f$ such that $T(\wt{M})=M$,
i.e.~the functor $T$ is essentially surjective.  Now \lemref{thm:aux1} says that $T$ is
full and faithful. It is well-known that an essentially surjective functor that is full
and faithful is an equivalence of categories (see e.g.~\cite{P}), proving (i). (ii) is
proved in an analogous fashion, while (iii) follows from combining (i) and (ii). \hfill
$\square$

\begin{rem}
\thmref{thm:equivalence} (iii) was stated as a conjecture in \cite[Conjecture 4.18]{CW2}.
\cite[Conjecture 4.18]{CW2} in the special case of $Y=[-m,-2]$ was already formulated in
\cite[Conjecture 6.10]{CWZ}.  A proof of \thmref{thm:equivalence} (iii) in the special
case $Y=[-m,-2]$ was announced by Brundan and Stroppel in \cite{BS}. The proof we have
presented here is different from the one announced in \cite{BS}, as it constructs
directly the functors inducing this equivalence and also does not rely on \cite{B}.
\end{rem}

\begin{rem}
The classical BGG-type resolutions for the finite-dimensional modules of \cite{Le} and
for the unitarizable modules of \cite{EW} in terms of parabolic Verma $\G$-modules
together with \thmref{thm:equivalence} imply the existence of BGG-type resolutions for
the corresponding $\DG$- and $\SG$-modules in terms of parabolic Verma $\DG$- and
$\SG$-modules, respectively. The case of $\SG$ and $Y=[-m,-2]$ was already established in
\cite{CKL}.
\end{rem}

\bigskip
\frenchspacing

\end{document}